\newtheorem{prop}{Proposition}[section]
\newtheorem{lemma}[prop]{Lemma}
\newtheorem{thm}[prop]{Theorem}
\newtheorem{cor}[prop]{Corollary}
\newtheorem{conj}[prop]{Conjecture}
\theoremstyle{definition}
\newtheorem{defn}[prop]{Definition}
\newtheorem{rmk}[prop]{Remark}
\renewcommand{\tilde}{\widetilde}
\DeclareMathOperator{\Alb}{Alb}
\DeclareMathOperator{\rk}{rk}     
\DeclareMathOperator{\Pic}{Pic} 
\DeclareMathOperator{\spec}{Spec} 
\DeclareMathOperator{\Spec}{Spec} 
\DeclareMathOperator{\Sym}{Sym}
\DeclareMathOperator{\Bl}{Bl}
\DeclareMathOperator{\Hom}{Hom}
\DeclareMathOperator{\id}{id} 
\DeclareMathOperator{\Mod}{Mod}
\DeclareMathOperator{\CH}{CH}
\DeclareMathOperator{\CHM}{CHM}
\DeclareMathOperator{\Mot}{Mot}
\newcommand{\et}{\mathrm{\acute{e}t}}
\renewcommand{\top}{\mathrm{top}}
\DeclareMathOperator{\pt}{pt}
\DeclareMathOperator{\im}{Im}
\DeclareMathOperator{\PP}{\mathbb{P}}
\DeclareMathOperator{\C}{\mathbb{C}}
\DeclareMathOperator{\R}{\mathbb{R}}
\DeclareMathOperator{\Z}{\mathbb{Z}}
\DeclareMathOperator{\Q}{\mathbb{Q}}
\DeclareMathOperator{\F2}{\mathbb{F}_2}
\DeclareMathOperator{\h}{\mathfrak{h}}
\DeclareMathOperator{\cl}{cl}
\DeclareMathOperator{\KR}{KR}
\DeclareMathOperator{\KU}{KU}
\DeclareMathOperator{\conjug}{conj}
\DeclareMathOperator{\Hilb}{Hilb}
\DeclareMathOperator{\Tors}{Tors}
\newcommand{\cart}{\ar@{}[dr]|\square}
\title{Maximal real varieties from moduli constructions}
\author{Lie Fu}
\thanks{\textit{2020 Mathematics Subject Classification:}  14P25, 14H60, 14J60, 14C05, 14C15.}
\thanks{\textit{Key words and phrases:} real varieties, maximal varieties, moduli spaces, vector bundles, Higgs bundles, stability, Hilbert schemes, motives, equivariant formality}
\thanks{The author is supported by the University of Strasbourg Institute for Advanced Study (USIAS) and by the Agence Nationale de la Recherche (ANR), under project numbers ANR-16-CE40-0011 and ANR-20-CE40-0023. }
\begin{document}

\maketitle

\begin{abstract}
For a complex manifold equipped with an anti-holomorphic involution, which is referred to as a real variety,  the Smith--Thom inequality states that the total $\F2$-Betti number of the real locus is not greater than the total $\F2$-Betti number of the ambient complex manifold.  A real variety is called maximal if the equality holds. In this paper, we present a series of new constructions of maximal real varieties by exploring moduli spaces of certain objects on a maximal real variety. Our results establish the maximality of the following real varieties:
\begin{itemize}
	\item Moduli spaces of stable vector bundles of coprime rank and degree over a maximal smooth projective real curve (known as Brugall\'e--Schaffhauser's theorem \cite{BrugalleSchaffhauser}, with a short new proof presented in this work); the same result holds for moduli spaces of stable parabolic vector bundles.
	\item Moduli spaces of stable Higgs bundles of coprime rank and degree over a maximal smooth projective real curve, providing maximal hyper-K\"ahler examples.
	\item If a real variety has non-empty real locus and maximal Hilbert square, then the variety itself and its Hilbert cube are maximal. This is always the case for maximal real smooth cubic threefolds, but never the case for maximal real smooth cubic fourfolds.
	\item Punctual Hilbert schemes on a maximal real projective surface with vanishing first $\F2$-Betti number and connected real locus, such as $\R$-rational maximal real surfaces and some generalized Dolgachev surfaces.
	\item Moduli spaces of stable sheaves on the real projective plane, or more generally, on an $\R$-rational maximal Poisson surface.
\end{itemize} 
We also observe that maximality is a motivic property when interpreted as equivariant formality. Furthermore, any smooth projective real variety motivated by maximal ones is also maximal. 
\end{abstract}

\tableofcontents

\newpage
\section{Introduction}

\subsection{Background}
A \textit{real structure} on a complex manifold $X$ is an anti-holomorphic involution, that is, a diffeomorphism
\[\sigma \colon X\to X\]
satisfying $\sigma^2=\id_X$ and $\sigma^*I=-I$, where $I$ denotes the complex structure on $X$. In this paper, a \textit{real variety} (or $\mathbb{R}$-variety) refers to a pair $(X, \sigma)$ consisting of a complex manifold $X$ and a real structure $\sigma$ on it. If there is no risk of confusion, we may omit the reference to $\sigma$ in our notation.
The \textit{real locus} of $(X, \sigma)$, denoted by $X(\R)$, is defined to be the fixed locus of the involution $\sigma$. When $X(\R)\neq \emptyset$,  it is a differentiable submanifold of $X$, and its real dimension is equal to the complex dimension of $X$. These concepts can be extended to the setting of complex analytic spaces. 

In algebraic geometry, a \textit{real form} of a complex algebraic variety $X$ is an algebraic variety $X_0$ defined over $\R$ such that $X\simeq X_0\times_{\Spec \R} \Spec\C$ as $\C$-varieties. For a quasi-projective complex variety $X$, there is a natural bijection between  the set of its real forms (up to $\R$-isomorphism) and the set of real structures on $X(\C)$ (up to conjugation); see \cite[Exercise II.4.7]{Hartshorne}. In this paper, we will use both perspectives interchangeably. Except in Section \ref{sec:Motive}, $X$ denotes the complex space $X(\C)$.

As was included as the 16th problem in Hilbert's famous list \cite{Hilbert1900}, real algebraic geometry has a central theme of studying the topology of the real locus $X(\R)$ and its relation to the complex geometry of $X$. This subject predates complex algebraic geometry, as seen in the classical Harnack theorem from 1876 \cite{Harnack}, which states that a real plane curve of degree $d$ has at most $\frac{d^2-3d+4}{2}$ connected components.
In 1882, Klein \cite{Klein} found an intrinsic generalization of this theorem to any genus-$g$ compact Riemann surface equipped with a real structure, showing that the real locus can have at most $g+1$ connected components (which are circles). This result on curves is generalized to arbitrary dimensions through the \textit{Smith--Thom inequality}; see for example \cite[\S 1.2]{DIK-RealEnriquesSurfaces},  \cite[Theorem 3.3.6]{MangolteBook},

\begin{thm}[Smith--Thom inequality]
	\label{thm:SmithThom}
	Let $(X,\sigma)$ be a real variety of dimension $n$.
	We have the following inequality for the total $\F2$-Betti numbers 
	\begin{equation}
		\label{eqn:SmithThomInequality}
		b_*(X(\R), \F2)\leq b_*(X, \F2).
	\end{equation}
\end{thm}
Here  for a topological space $W$, its total $\F2$-Betti number is $b_*(W, \F2):=\sum_{i} b_i(W, \F2)$ with $b_i(W, \F2):=\dim_{\F2} H^i(W, \F2)$.

When equality in \eqref{eqn:SmithThomInequality} holds, we call $(X, \sigma)$ a \textit{maximal} real variety (or \textit{M-variety}).
Maximal real varieties have attracted significant research interest over the decades. 
Intriguing properties are uncovered for those real varieties, such as the Rokhlin congruence theorem for even-dimensional maximal smooth projective real varieties: $\chi_{\top}(X(\R))\equiv \operatorname{sgn}(X)  \mod 16$; see \cite[Theorem 3.4.2]{MangolteBook}. 
On the other hand, providing a rich supply of examples of maximal real varieties has always been an important task, but constructing them has proven challenging, particularly in higher dimensions. For curves and surfaces, the construction problem has been extensively studied, and for certain class of varieties (e.g.~K3 surfaces) even a classification up to real deformation is partially achieved; we refer to \cite{MangolteBook} for an account. However, the available  methods for constructing higher-dimensional maximal real varieties have been somewhat limited. Viro's combinatorial patchworking for hypersurfaces \cite{ItenbergViro} is currently the most powerful method. We refer to \cite[\S 3]{BrugalleSchaffhauser} for a recent summary.

\subsection{Results}
The main objective of this paper is to present a new type of constructions that produces new maximal varieties from existing ones. Our approach can be loosely described as ``taking moduli spaces of objects on a maximal variety'', where \textit{objects} can refer to algebraic cycles (or flags of such), vector bundles, coherent sheaves (or complexes of such), and so on. We have achieved the following concrete results.
\begin{enumerate}
	\item[(i)] We provide a short and computation-free  proof of Brugall\'e--Schaffhauser's result (Theorem \ref{thm:VBAC}) that for coprime integers $n>0$ and $d$, the moduli space $M_C(n, d)$ of stable vector bundles of rank $n$ and degree $d$ on a maximal curve $C$ is a maximal variety, and vice versa if $C(\R)\neq \emptyset$. The same result holds for the moduli spaces of stable parabolic bundles with full flag type (Corollary \ref{cor:Parabolic}).
	\item[(ii)]  By adapting our new proof  in (i) to the context of Higgs bundles, we establish the maximality of the moduli space $H_C(n, d)$ of stable Higgs bundles over a maximal curve $C$ of coprime rank and degree (Theorem \ref{thm:Higgs}). 
	\item[(iii)]  If a real variety $X$ has maximal Hilbert square $X^{[2]}$ and $X(\R)\neq \emptyset$, then $X$ itself, its Hilbert cube $X^{[3]}$, and the nested Hilbert schemes $X^{[1,2]}$ and $X^{[2,3]}$ are all maximal (Theorem \ref{thm:HilbertSquareCube}). 
	\item[(iv)] For a maximal real smooth cubic threefold, all the smooth (nested) Hilbert schemes are maximal (Theorem \ref{thm:Cubic3folds}). On the contrary, for a maximal real smooth cubic fourfold, its Hilbert square is never maximal (Theorem \ref{thm:NonMaxHilbCubic4}).
	\item[(v)]  If a smooth projective maximal  $\R$-surface $S$ satisfies $H^1(S, \F2)=0$  and $S(\mathbb{R})$ is connected, then the $n$-th punctual Hilbert scheme $S^{[n]}$ is maximal, for any $n>0$ (Theorem \ref{thm:HilbertPower}). This result applies, for example, to maximal $\R$-rational real surfaces (Corollary \ref{cor:RealRationalSurface}),  to some surfaces of Kodaira dimension 1 (Corollary \ref{cor:Dolgachev}), and to smooth projective real surfaces with Tate motives (Theorem \ref{thm:SurfaceTate}).
	\item[(vi)]  For integers $r>0, c_1,  c_2$ with $	\gcd\left(r, c_1, \frac{c_1(c_1+1)}{2}-c_2\right)=1$, the moduli space of stable sheaves on $\PP^2_{\R}$ with rank $r$ and first and second Chern classes $c_1$ and $c_2$ is also maximal (Theorem \ref{thm:ModuliP2}).
	\item[(vii)]  For a Poisson $\R$-surface $S$ satisfying the K-maximality condition (Definition \ref{def:KMax}) and a generic real ample line bundle $H$, the moduli spaces $M_H(S, v)$ of $H$-stable sheaves with Mukai vector $v$ on $S$ are maximal (Theorem \ref{thm:PoissonSurface}). This result applies, for example, to $\R$-rational maximal real Poisson surfaces (Corollary \ref{cor:RationalPoisson}).
\end{enumerate}
In all of our results, we implicitly include the claim that the moduli space (or Hilbert scheme) under consideration has a natural real structure. The maximality referred to in the conclusion is with respect to this real structure.

\subsection{Motives} 
In addition to providing examples of maximal varieties obtained via moduli constructions, another important aspect of the paper is to highlight, in Section \ref{sec:Motive}, the significant role played by \textit{motive} and its \textit{equivariant formality} in the study of maximal real varieties.

	We observe that the maximality of real algebraic varieties is a \textit{motivic} property, determined solely by the homological real motive with $\F2$-coefficients of the real variety (Definition \ref{def:MaximalMotive}). As a consequence (Corollary \ref{cor:MotivationMaximal}),\textit{ if a real algebraic variety $Y$ has motive contained in the tensor subcategory generated by the motives of some maximal varieties $\{X_i\} _{i}$ (in which case we say that $Y$ is \textit{motivated} by the $X_i$'s), then $Y$ is also maximal.}  From this motivic perspective, many results established in this paper can be explained or reproved. 
%	\begin{itemize}
%		\item The maximality of punctual Hilbert schemes of real surfaces with Tate motives (Theorem \ref{thm:SurfaceTate}).
%		\item Many maximal real cubic fourfolds have non-maximal Hilbert squares (Theorem \ref{thm:NonMaxHilbCubic4}).
%	\end{itemize}
Further details can be found in  Section \ref{sec:Motive}.

\subsection{Branes in hyper-K\"ahler manifolds}
	On a hyper-K\"ahler manifold $X$, an anti-holomorphic involution $\sigma$ is referred to as an (ABA) or (AAB) brane involution\footnote{The distinction between (ABA) and (AAB) depends on the action of the involution on the holomorphic symplectic form: (ABA) if it is preserved and (AAB) if it is reversed.}, and its fixed locus, which is precisely the real locus $X(\R)$, is known as an (ABA) or (AAB) brane in $X$, as defined in \cite{BaragliaSchaposnik} (see also \cite[\S 2.3]{FrancoJardimMenet}). Regarding to the Smith--Thom inequality, $X(\mathbb{R})$ is called a \textit{maximal} brane if the real variety $(X, \sigma)$ is maximal. Our aforementioned result (ii) on Higgs bundles provides examples of maximal (ABA) or (AAB) branes in hyper-K\"ahler manifolds of arbitrarily high dimensions, which are non-compact but have a complete hyper-K\"ahler metric. Another example of maximal brane in a non-compact hyper-K\"ahler manifold is provided by the cotangent bundle of a maximal $\mathbb{R}$-variety. 
	
	However, the situation is more intriguing for \textit{compact} hyper-K\"ahler manifolds. On the one hand, in (complex) dimension 2, maximal real K3 surfaces and abelian surfaces exist and have been thoroughly studied; see \cite{Kharlamov-K3-76}, \cite[Chapters IV, VIII]{Silhol-Surface-LNM}. On the other hand, Kharlamov and R{\u{a}}sdeaconu \cite{Kharlamov-Rasdeaconu-HilbertSquare} recently discovered that the Hilbert squares of maximal real K3 surfaces and Fano varieties of lines of maximal cubic fourfolds are \textit{never} maximal (see Remark \ref{rmk:LossOfMaximality}). This leads to a challenging question: \textit{does there exist maximal (ABA) or (AAB) branes in compact hyper-K\"ahler manifolds of dimension $\geq 4$?}

\subsection{Structure of the paper}
\begin{itemize}
		\item Section \ref{sec:Preliminaries} provides a brief overview of equivariant cohomology, Atiyah's $\KR$-theory, and Chern classes.
		\item Section \ref{sec:Operations} covers fundamental operations that preserve maximality of real varieties, including products (Lemma \ref{lemma:Product}), symmetric powers (Theorem \ref{thm:Franz}), projective bundles or flag bundles (Proposition \ref{prop:ProjBun}), blow-ups (Proposition \ref{prop:Blowup}), flips and flops (Remark \ref{rmk:FlipFlop}), generically finite surjection with odd degree (Proposition \ref{prop:SurjectionOddDegree}), the Albanese variety and the Picard variety (Proposition \ref{prop:AlbMax}),
	the intermediate Jacobian of a maximal real cubic threefolds (Proposition \ref{prop:Cubic3}), etc.  These results are either documented in the literature or well-known to experts.
	\item Section \ref{sec:VariantsMaximality} introduces $c_1$-maximality and K-maximality, discusses their relation to maximality, and provides some examples.
	\item The main results are presented and proved in Sections \ref{sec:VBAC},  \ref{sec:Higgs}, \ref{sec:HilbertSquare}, \ref{sec:HilbertSchemes}, \ref{sec:ModuliP2}, \ref{sec:Poisson}; please refer to the table of contents.
	\item In Section \ref{sec:Motive}, basic notions of motives are recalled, the notion of equivariant formality is introduced. Maximality is reformulated in the context of motives, and  Section \ref{subsec:Applications} revisits the results from a motivic point of view and provide some more applications.
\end{itemize}

\noindent\textbf{Acknowledgment:} 
I would like to express my gratitude to Olivier Benoist, Erwan Brugall\'e, Victoria Hoskins, Simon Pepin Lehalleur, Burt Totaro, and Jean-Yves Welschinger for their interest and invaluable comments on this work. I also extend special thanks to Viatcheslav Kharlamov for his enthusiastic participation in numerous discussions and for generously sharing his recent findings  in \cite{Kharlamov-Rasdeaconu-HilbertSquare}. Furthermore, I am grateful for the conference ``Real Aspects of Geometry"  held at CIRM in 2022, which provided me with many sources of inspiration.

\section{Preliminaries}
\label{sec:Preliminaries}
Throughout the paper, we denote by $G:=\operatorname{Gal}(\mathbb{C}/\mathbb{R})$, which is a cyclic group of order 2.
\subsection{Equivariant cohomology}

Let $X$ be a complex variety or manifold. A real structure $\sigma$ on $X$ gives rise to an action of $G$ on $X$, where the nontrivial element of $G$ acts by $\sigma$, an anti-holomorphic involution. 

Let $BG$ be the classifying space of $G$, and $EG$ its universal cover. These spaces are unique up to homotopy equivalence. In the case where $G\simeq \mathbb{Z}/2\mathbb{Z}$, we can use the infinite-dimensional real projective space $\mathbb{R}\mathbb{P}^{\infty}$ (resp.\,infinite-dimensional sphere $\mathbb{S}^{\infty}$), endowed with the weak topology, as a model for $BG$ (resp.\,$EG$). 

Let $X_G:=X\times^G EG$ be the Borel construction, where $\times^{G}$ denotes the quotient by the (free) diagonal action of $G$.
Given a coefficient ring $F$ (such as $\mathbb{Z}$ or $\F2$), we denote by $$H^*_G(X, F):= H^*(X_G, F)$$ the equivariant cohomology ring with $F$-coefficients. 

We note that there is a natural fiber sequence $$X\hookrightarrow X_G\to BG,$$
which gives rise to the Leray--Serre spectral sequence:
\begin{equation}
	\label{eqn:LerarySerre}
	E_2^{p,q}=H^p(G, H^q(X, \F2))\Rightarrow H^{p+q}_G(X, \F2).
\end{equation}
Here, the left-hand side is the group cohomology and $H^q(X, \F2)$ is viewed as a $G$-module.

More generally, if we have a $G$-module $\mathcal{F}$, such as $\Z(r):=(\sqrt{-1})^r\Z$, which is the $G$-module with underlying group $\Z$ upon which the nontrivial element of $G$ acts by $(-1)^r$ (hence only the parity of $r$ matters), then we have the corresponding local system on the Borel construction $X_G$,  still denoted by $\mathcal{F}$. We define $H^*_G(X, \mathcal{F})$ to be the local cohomology $H^*(X_G, \mathcal{F})$, and the Leray--Serre spectral sequence \eqref{eqn:LerarySerre} generalizes to this setting. For example:
\begin{equation}
\label{eqn:LerarySerre-Coeff}
E_2^{p,q}=H^p(G, H^q(X, \mathbb{Z}(1)))\Rightarrow H^{p+q}_G(X, \mathbb{Z}(1)).
\end{equation}

\begin{rmk}[Algebraic setting]
For an algebraic variety $X_0$ defined over $\mathbb{R}$, and a torsion $G$-module $\mathcal{F}$, viewed as an \'etale sheaf on $\Spec(\mathbb{R})$, the equivariant cohomology $H^*_G(X_0(\mathbb{C}), \mathcal{F})$ is canonically identified with the \'etale cohomology $H^*_{\et}(X_0, \mathcal{F})$, where $\mathcal{F}$ also denotes its pullback to $X_0$ (see \cite[Corollary 15.3.1]{Sheiderer-LNM-Real&EtaleCohom}). Under this identification, the Leray--Serre spectral sequence is identified with the Hochschild--Serre spectral sequence.
\end{rmk}

We will use the following basic fact.

\begin{lemma}
	\label{lemma:Injectivity}
	Let $X$ be a real variety with $X(\mathbb{R})\neq \emptyset$ and $F$ a $G$-module (e.g. $\F2$, $\mathbb{Z}$, or $\mathbb{Z}(1)$). Then, for any $i$, the following map induced by the projection $\pi\colon X_G\to BG$ is injective:
	\[H^i(G, F)\xrightarrow{\pi^*}H^i_G(X, F).\]
\end{lemma}
\begin{proof}
	We can choose a real point $x\in X(\mathbb{R})$, which induces a section $s\colon BG\to X_G$ of $\pi$. Therefore, $s^*\circ \pi^*=\id$ and $\pi^*$ is injective. 
\end{proof}

\subsection{Maximality}
Recall that a real variety $(X, \sigma)$ is called \textit{maximal} if the Smith--Thom inequality in Theorem \ref{thm:SmithThom} becomes an equality: $b_*(X, \F2)=b_*(X(\mathbb{R}), \F2)$.

We will use repeatedly the following result, which reformulates the maximality of a real variety $(X, \sigma)$ in terms of the surjectivity of the restriction map from equivariant cohomology to ordinary cohomology. A proof in a more general setting can be found, for instance, in \cite[Chapter III, Proposition 4.16]{TomDieck}.
\begin{prop}[Maximality and equivariant formality]
	\label{prop:MaximalCriterion}
	Let $(X, \sigma)$ be a real variety. The following conditions are equivalent:
	\begin{enumerate}
		\item[(i)] $X$ is maximal.
		\item[(ii)]  The action of $G$ on $H^*(X, \F2)$ is trivial and the Leray--Serre spectral sequence \eqref{eqn:LerarySerre} degenerates at the $E_2$-page. 
		\item[(iii)]  The restriction homomorphism $H^*_G(X, \F2)\to H^*(X, \F2)$ is surjective.
	\end{enumerate}
\end{prop}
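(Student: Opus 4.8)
The plan is to read all three conditions off the spectral sequence \eqref{eqn:LerarySerre}, exploiting the module structure over $H^*(BG,\F2)=\F2[t]$ (with $\deg t = 1$) that equivariant cohomology carries. First I would record the two structural inputs I need. Since $G$ has order $2$ and we work over $\F2$, every finite-dimensional $\F2[G]$-module splits into copies of the trivial module $\F2$ and the free module $\F2[G]$; writing $H^q(X,\F2)\cong \F2^{\,a_q}\oplus \F2[G]^{\,b_q}$, the group-cohomology computation gives $E_2^{0,q}=H^q(X,\F2)^G=\F2^{\,a_q+b_q}$ and $E_2^{p,q}=\F2^{\,a_q}$ for all $p\ge 1$. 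Moreover the restriction map of (iii) is exactly the edge homomorphism of \eqref{eqn:LerarySerre}: it factors as the surjection $H^*_G(X,\F2)\twoheadrightarrow E_\infty^{0,*}$ followed by the inclusions $E_\infty^{0,*}\hookrightarrow E_2^{0,*}=H^*(X,\F2)^G\hookrightarrow H^*(X,\F2)$.

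For (ii)$\Leftrightarrow$(iii) the key is that \eqref{eqn:LerarySerre}, being the Serre spectral sequence of a fibration over $BG$, is a spectral sequence of $\F2[t]$-modules, so every differential is $t$-linear. Assuming (iii), surjectivity of the edge map onto $H^*(X,\F2)$ forces two things: first $H^*(X,\F2)^G=H^*(X,\F2)$, i.e.\ the $G$-action is trivial (all $b_q=0$); and second $E_\infty^{0,*}=E_2^{0,*}$, i.e.\ every differential leaving the zeroth column vanishes. With the action trivial, $E_2=\F2[t]\otimes H^*(X,\F2)$ is a free $\F2[t]$-module generated in the zeroth column, so $t$-linear differentials that kill the generators must vanish identically; an induction on the pages then gives degeneration, which is (ii). The reverse implication is immediate, since trivial action identifies $E_2^{0,*}$ with $H^*(X,\F2)$ and degeneration gives $E_\infty^{0,*}=E_2^{0,*}$, making the edge map onto.

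Finally, for (i)$\Leftrightarrow$(ii) I would invoke the localization theorem of Smith theory, whose upshot here is that the free $\F2[t]$-rank of $H^*_G(X,\F2)$ equals $b_*(X(\R),\F2)$; concretely $\dim_{\F2}H^n_G(X,\F2)$ stabilizes to $b_*(X(\R),\F2)$ as $n\to\infty$. Comparing with the spectral sequence gives, for $n$ large, $b_*(X(\R),\F2)=\dim_{\F2}H^n_G(X,\F2)=\sum_{p+q=n}\dim E_\infty^{p,q}\le \sum_{p+q=n}\dim E_2^{p,q}=a$, where $a:=\sum_q a_q$ is the number of trivial summands and satisfies $a\le b_*(X,\F2)$ with equality exactly when the action is trivial. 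This produces the refined chain $b_*(X(\R),\F2)\le a\le b_*(X,\F2)$, a sharpening of Theorem \ref{thm:SmithThom}. Maximality (i) forces both to be equalities: $a=b_*(X,\F2)$ gives triviality of the action, and then, $E_2=\F2[t]\otimes H^*(X,\F2)$ having stable total dimension $b_*(X,\F2)$, any nonzero differential---being $t$-linear and hence nonzero in all high degrees---would drop the stable dimension of $E_\infty$ strictly below $b_*(X(\R),\F2)$, a contradiction; so the sequence degenerates and (ii) holds. Conversely (ii) makes the stable dimension of $E_\infty$ equal to $a=b_*(X,\F2)$, whence $b_*(X(\R),\F2)=b_*(X,\F2)$. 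I expect the main obstacle to be the localization input identifying the free $\F2[t]$-rank with $b_*(X(\R),\F2)$, together with the finite-generation of $H^*_G(X,\F2)$ over $\F2[t]$ (guaranteed by compactness of $X$) needed for the stabilization arguments; the $t$-linear propagation of differentials, which upgrades the partial vanishing coming from (iii) and (i) to full degeneration, is the technical device that makes everything fit together.
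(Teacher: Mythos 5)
Your argument is correct, and it is essentially the standard proof: the paper itself gives no argument but defers to tom Dieck (Chapter III, Proposition 4.16), whose proof runs along exactly these lines — the $\F2[C_2]$-module decomposition of $H^*(X,\F2)$, the $\F2[t]$-linearity of the differentials in \eqref{eqn:LerarySerre}, and the localization theorem identifying the stable dimension of $H^n_G(X,\F2)$ with $b_*(X(\R),\F2)$. The only hypotheses you should make explicit are the finiteness assumptions (finite-dimensionality of $X$ and finiteness of $b_*(X,\F2)$) needed for the stabilization and localization steps, which hold in the paper's setting.
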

The  condition (iii) is called the \textit{equivariant formality} for the action of $G$ on $X$ (with coefficients $\F2$), see \cite{Franz}. This notion will be extended to motives in Section \ref{sec:Motive}.

We state the following simple fact for future reference. 
\begin{lemma}\label{lemma:Decomposition}
	Let $(X, \sigma)$ be a real variety, and let $i$ be a natural number. Assume that $H^i(X, \mathbb{Z})$ and $H^{i+1}(X, \mathbb{Z})$ are both 2-torsion-free. If the action of $G$ on $H^i(X, \F2)$ is trivial (e.g.~when $X$ is maximal), then we have
	\[H^i(X, \mathbb{Z})=H^i(X, \mathbb{Z})^G\oplus H^i(X, \mathbb{Z}(1))^G,\]
	where $H^i(X, \mathbb{Z}(1))^G$ denotes  the $\sigma$-anti-invariant part of $H^i(X, \mathbb{Z})$.
\end{lemma}
\begin{proof}
By the 2-torsion freeness assumption, $H^i(X, \mathbb{Z})^G\cap H^i(X, \mathbb{Z}(1))^G=0$ and we have an exact sequence
\[0\to H^i(X, \mathbb{Z})\xrightarrow{\times 2} H^i(X, \mathbb{Z})\to H^i(X, \F2)\to 0.\]
For any $\alpha\in H^i(X, \mathbb{Z})$, since $\sigma$ acts trivially on its image  $\overline{\alpha}\in H^i(X, \F2)$ by assumption, the above exact sequence implies that there exist unique elements $\alpha_1, \alpha_2\in H^i(X, \mathbb{Z})$ such that $\alpha+\sigma^*(\alpha)=2\alpha_1$ and $\alpha-\sigma^*(\alpha)=2\alpha_2$. Again by the 2-torsion freeness assumption, we have $\alpha_1\in H^i(X, \mathbb{Z})^G$, $\alpha_2\in H^i(X, \mathbb{Z}(1))^G$ and $\alpha=\alpha_1+\alpha_2$. The direct sum decomposition is proved.
\end{proof}

\subsection{K-theory}
\label{subsec:Ktheory} 
For a topological space $X$, Atiyah \cite{Atiyah-K-theory} introduced the topological (complex) K-theory, denoted by $\KU^*(X)$ in this paper\footnote{The notation $K^*_{\top}$ is also commonly used in the literature.}. One key property of the topological K-theory is the Bott periodicity, which states that $\KU^*(X)\simeq \KU^{*+2}(X)$.  Specifically, $\KU^0(X)$ is defined as the Grothendieck group of the category of $\mathcal{C}^\infty$ complex vector bundles on $X$. Chern classes can be defined on $\KU^0(X)$ using standard axioms, which allows the construction of the Chern polynomial map:
\[c_t\colon \KU^0(X)\to 1+\bigoplus_{r>0} H^{2r}(X, \Z)t^r.\]
This map sends $[E]$ to $1+c_1(E)t+c_2(E)t^2+\cdots$,  where $[E]$ denotes the class of a complex vector bundle $E$.

Now, let $(X, \sigma)$ be a real variety. A \textit{real vector bundle} on $X$, in the sense of Atiyah \cite{Atiyah-KR}, is a complex\footnote{in the $\mathcal{C}^\infty$ (topological) sense, not necessarily holomorphic or anti-holomorphic.} vector bundle $\pi\colon E\to X$ together with an involution $\tilde\sigma$ covering $\sigma$:
\begin{equation}
\xymatrix{
E \ar[r]^{\tilde\sigma} \ar[d]_{\pi}& E\ar[d]^{\pi}\\
X \ar[r]^{\sigma}& X
}
\end{equation}
such that the map $\tilde\sigma$ on the fibers $E_x\to E_{\sigma(x)}$ is $\C$-anti-linear for any $x\in X$.

\begin{rmk}\label{rmk:HolomorphicRealBundle}
To avoid confusion, it is important to note that, just like in the complex case, a real vector bundle $(E, \tilde{\sigma})$ does not necessarily have a holomorphic structure, and as a manifold, $E$ does not have a natural complex structure.  Therefore, it does not make sense to say that $\tilde{\sigma}$ is anti-holomorphic. 
However, in the holomorphic context, we can define a \textit{holomorphic real vector bundle} (also known as $\R$-bundle) on a real variety $(X, \sigma)$. This is a holomorphic vector bundle $E\to X$ together with a real structure (an anti-holomorphic involution) $\tilde\sigma$ on $E$, which covers $\sigma$ and induces a $\C$-anti-linear map $E_x\to E_{\sigma(x)}$ for any $x\in X$. According to GAGA, for a smooth projective  $\R$-variety with a corresponding real form $X_{\R}$, a holomorphic real vector bundle on $X_{\C}$ is an algebraic vector bundle on $X_{\R}$.
\end{rmk}

Atiyah \cite{Atiyah-KR} defined the $\KR$-theory for a real variety $(X, \sigma)$, denoted by $\KR^*(X)$, which is a graded commutative ring, with unit given by the class of the trivial line bundle $\C\times X$ with the complex conjugation on fibers. Specifically, $\KR^0(X)$ is the Grothendieck ring of the tensor category of real vector bundles on $X$. The $\KR$-theory $\KR^*(X)$ satisfies Bott periodicity $\KR^*(X)\simeq \KR^{*+8}(X)$ and interpolates between the (complex) topological K-theory, the equivariant K-theory,  and the $\operatorname{KO}$-theory. More details on $\KR$-theory can be found in the original source \cite{Atiyah-KR}.

Forgetting the involution on the bundle gives a natural map from the $\KR$-theory to the topological K-theory:
\begin{equation}
\KR^*(X)\to \KU^*(X).
\end{equation}

\subsection{Chern class}
\label{subsec:ChernClass}
For a real vector bundle $E$ on a real variety $X$ and  a positive integer $r\in \mathbb{N}$, the $r$th \textit{equivariant Chern class} of $E$,  denoted by $c^G_r(E)$,  is defined by Kahn \cite{Kahn-RealChernClass}. It is an element in $H^{2r}_G(X, \Z(r))$, where $\Z(r)=(\sqrt{-1})^r\Z$ is the Tate twist. We can define the equivariant Chern polynomial
$$c^G_t(E)\in 1+\bigoplus_{r>0} H^{2r}_G(X, \Z(r))t^r.$$
It is easy to see that equivariant Chern classes satisfy the same axioms as Chern classes. In particular, the additivity with respect to short exact sequences ensures that equivariant Chern classes are well-defined on $\KR^0(X)$.
For our purpose, we are mostly interested in cohomology with $\F2$-coefficients. We often view $c_r^G(E)$ as an element in $H^{2r}_G(X, \F2)$, by composing with the natural map $H^{2r}_G(X, \Z(r))\to H^{2r}_G(X, \F2)$, . (Note that the Tate twists are dropped when passing to $\F2$-coefficients since the $G$-module $\F2(r)$ is  always isomorphic to the trivial one $\F2$.)

We have the following commutative diagram:
\begin{equation}
	\xymatrix{
\KR^0(X) \ar[rr] \ar[d]^{c^G_t}& &\KU^0(X) \ar[d]^{c_t}\\
	1+\bigoplus_{r> 0} H^{2r}_G(X, \Z(r))t^r \ar[rr] \ar[d]&& 1+\bigoplus_{r>0} H^{2r}(X, \Z)t^r \ar[d]\\
		1+\bigoplus_{r>0} H^{2r}_G(X, \F2)t^r \ar[rr]&& 1+\bigoplus_{r>0} H^{2r}(X, \F2)t^r.
}
\end{equation}
where the top horizontal arrow is induced by forgetting the involution, the middle and bottom horizontal arrows are restriction maps.

On the level of cohomology, there is a natural map of ``restricting to the real locus'', which was introduced by Krasnov \cite{Krasnov-94} and van Hamel \cite{vanHamel-thesis}:
\[H^{2r}_G(X, \F2)\to H^r(X(\R), \F2).\]
For a real vector bundle $E$, we define $c_r^R(E)\in H^{r}_G(X(\R), \F2)$ as the image of the equivariant Chern class $c_r^G(E)\in H^{2r}_G(X, \F2)$ under the above map; this element is called the $r$th \textit{real Chern class} of $E$. In the algebraic setting, for an algebraic vector bundle $E$ on a real variety $X_{\R}$, we can equivalently define $c_r^R(E)$ as the image of the Chow-theoretic Chern class $c_r(E)\in \CH^r(X_{\R})$ under the Borel--Haefliger cycle class map \cite{BorelHaefliger} $\cl_R\colon \CH^r(X_{\R})\to H^r(X(\R), \F2)$.

\section{Basic operations preserving maximality}
\label{sec:Operations}
For facilitate easy reference, this section summarizes various operations that enable the construction of new maximal real varieties from existing ones. 

\subsection{Product} 
\begin{lemma}
	\label{lemma:Product}
Let $X_1, \dots, X_n$ be real varieties. Then the product $X:=X_1\times \dots\times X_n$ (endowed with the product real structure) is maximal if and only if each $X_i$ is maximal.
\end{lemma}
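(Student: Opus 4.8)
The plan is to reduce the biconditional to an elementary inequality about total $\F2$-Betti numbers, by computing both sides of the Smith--Thom inequality for a product by means of the Künneth formula. The point is that, working with coefficients in the field $\F2$, Künneth has no Tor terms, so everything multiplies cleanly, and the only real work is a positivity observation.

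First I would record the two Künneth computations. For the ambient complex manifolds one has $H^*(X, \F2) \cong \bigotimes_{i=1}^{n} H^*(X_i, \F2)$, hence $b_*(X, \F2) = \prod_{i=1}^{n} b_*(X_i, \F2)$. On the real side, the fixed locus of the product real structure $\sigma = \sigma_1 \times \cdots \times \sigma_n$ is the product of the fixed loci, that is $X(\R) = X_1(\R) \times \cdots \times X_n(\R)$; applying Künneth once more gives $b_*(X(\R), \F2) = \prod_{i=1}^{n} b_*(X_i(\R), \F2)$. All the groups involved are finite-dimensional since the $X_i$ are compact and so are their real loci. Now set $a_i := b_*(X_i(\R), \F2)$ and $A_i := b_*(X_i, \F2)$. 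By the Smith--Thom inequality (Theorem~\ref{thm:SmithThom}) applied to each factor, $0 \le a_i \le A_i$, and moreover $A_i \ge 1$ because each $X_i$ is a nonempty compact complex manifold, so $H^0(X_i, \F2) \ne 0$. In this notation, maximality of $X_i$ reads $a_i = A_i$, while maximality of $X$ reads $\prod_i a_i = \prod_i A_i$.

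The reverse implication is then immediate: if $a_i = A_i$ for all $i$, the two products agree. For the forward implication, suppose $\prod_i a_i = \prod_i A_i$. If some $a_j = 0$, the left-hand side vanishes whereas $\prod_i A_i \ge 1$, a contradiction; hence $a_i \ge 1$ for all $i$. With $0 < a_i \le A_i$ for every $i$, the product $\prod_i a_i$ can equal $\prod_i A_i$ only when $a_i = A_i$ for each $i$, which is exactly maximality of every factor. The main (indeed only) subtlety to keep in mind is the empty-real-locus case: a factor $X_j$ with $X_j(\R) = \emptyset$ has $a_j = 0 < 1 \le A_j$, so it is never maximal, and its presence also forces $X(\R) = \emptyset$ and hence non-maximality of $X$, consistent with the biconditional. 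The positivity $A_i \ge 1$ is precisely what drives the elementary inequality step, so I would state it explicitly rather than leave it implicit. (One could instead argue through the equivariant-formality criterion of Proposition~\ref{prop:MaximalCriterion}(iii), but that route requires an equivariant Künneth statement over $H^*(G,\F2)$ together with checking triviality of the $G$-action on the tensor product, which is strictly more delicate than the Betti-number computation above.)
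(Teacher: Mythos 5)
Your proof is correct and follows essentially the same route as the paper: apply the K\"unneth formula over $\F2$ to both $X$ and $X(\R)$ to get the two product formulas for total Betti numbers, then deduce the biconditional by comparing the products via the factorwise Smith--Thom inequality. The paper leaves the positivity bookkeeping ($A_i \ge 1$ and the empty-real-locus case) implicit where you spell it out, but the argument is identical.
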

\begin{proof}
By the K\"unneth formula $H^*(X, \F2)\simeq \bigotimes_i H^*(X_i, \F2)$, we have 
\begin{equation}\label{eqn:KunnethComplex}
b_*(X, \F2)=\prod_i b_*(X_i, \F2).
\end{equation}

Since $X(\R)=X_1(\R)\times \cdots \times X_n(\R)$, the K\"unneth formula yields that 
\begin{equation}
	\label{eqn:KunnethReal}
b_*(X(\R), \F2)=\prod_i b_*(X_i(\R), \F2).
\end{equation}
Comparing \eqref{eqn:KunnethComplex} and \eqref{eqn:KunnethReal}, we see immediately that the maximality of all $X_i$'s implies the maximality of $X$, and also conversely by combining with the Smith--Thom inequality (Theorem \ref{thm:SmithThom}).
\end{proof}

\subsection{Symmetric power}
For a positive integer $n$, the $n$-th symmetric power of a complex variety $X$, denoted by $X^{(n)}$ or $\Sym^n(X)$, is defined as the quotient of $X^n$ by the permutation action of the symmetric group $\mathfrak{S}_n$. More generally, for any subgroup $\Gamma$ of $\mathfrak{S}_n$, one can define the $\Gamma$-product of $X$, denoted by $X^\Gamma$, as the quotient $X^n/\Gamma$.  A real structure $\sigma$ on $X$ induces a natural real structure $\sigma^{(n)}$ on $X^{(n)}$ by sending $\{x_1, \dots, x_n\}$ to $\{\sigma(x_1), \dots, \sigma(x_n)\}$, and more generally also a real structure $\sigma^\Gamma$ on $X^\Gamma$.

Note that even when $X$ is smooth, $X^\Gamma$ can be singular (for example, $X^{(n)}$ is singular when $\dim(X)\geq 2$ and $n\geq 2$), but the notion of real structures still makes sense. Nevertheless, the Smith--Thom inequality remains valid and we can speak of maximality. 

We record the following theorem of Franz \cite{Franz}, which generalizes an earlier work of Biswas and D'Mello on curves \cite{BiswasDMello}.
\begin{thm}
	\label{thm:Franz}
	If $(X, \sigma)$ is a maximal real variety, then for any $n>0$ and any subgroup $\Gamma$ of $\mathfrak{S}_n$, the $\Gamma$-product $(X^\Gamma, \sigma^\Gamma)$ is maximal. In particular, all the symmetric powers $(X^{(n)}, \sigma^{(n)})$ are maximal.
\end{thm}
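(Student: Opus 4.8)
The plan is to reformulate maximality as an \emph{intrinsic} homotopy-theoretic property of the $\F2$-cochain complex, and then to observe that the $\Gamma$-product construction, being a homotopy functor, automatically preserves it. Throughout I write $G=\operatorname{Gal}(\C/\R)=\Z/2$ and regard $C^*(X;\F2)$ as an object of the derived category $D(\F2[G])$ via the $\sigma$-action (here $\F2[G]\cong\F2[\epsilon]/(\epsilon^2)$). The first step is to upgrade Proposition \ref{prop:MaximalCriterion} to the statement that $(X,\sigma)$ is maximal if and only if $C^*(X;\F2)$ is \emph{formal with trivial action}, i.e.\ isomorphic in $D(\F2[G])$ to $H^*(X;\F2)$ equipped with the trivial $G$-module structure and zero differential. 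Indeed, condition (ii) of Proposition \ref{prop:MaximalCriterion} says exactly that the cohomology modules $H^q(X;\F2)$ are trivial and that \eqref{eqn:LerarySerre} degenerates; since $\Ext^*_{\F2[G]}(\F2,\F2)=\F2[u]$ is generated in degree one, the differentials of \eqref{eqn:LerarySerre} are governed by the Postnikov $k$-invariants of $C^*(X;\F2)$, so degeneration forces all $k$-invariants to vanish and hence $C^*(X;\F2)$ to be formal; the converse is immediate. I expect this homological reformulation to be the conceptual heart of the argument.

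Next I would record the one geometric input: the assignment $X\mapsto X^\Gamma=X^n/\Gamma$ is a homotopy functor, and by a theorem of Dold on the homology of symmetric products its effect on $\F2$-(co)homology depends only on, and is functorial in, $H_*(X;\F2)$. Concretely, $X\mapsto C^*(X^\Gamma;\F2)$ descends to an endofunctor $\mathcal L^n_\Gamma$ of $D(\F2)$ with $C^*(X^\Gamma;\F2)\simeq\mathcal L^n_\Gamma\bigl(C^*(X;\F2)\bigr)$; being natural, it refines to the $G$-equivariant setting: because $\sigma$ acts diagonally on $X^n$ and commutes with the permutation $\Gamma$-action, the induced real structure $\sigma^\Gamma$ makes this quasi-isomorphism one in $D(\F2[G])$. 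It is worth stressing that this is where the characteristic-$2$ subtlety is absorbed. One is \emph{not} claiming $H^*(X^\Gamma;\F2)=H^*(X^n;\F2)^\Gamma$: since $|\Gamma|$ is even there is no transfer and that equality is false — already $\Sym^2 S^1$ is a M\"obius band with total $\F2$-Betti number $2$, whereas the swap-invariants of $H^*(S^1\times S^1;\F2)$ have total dimension $3$. The point is precisely that one must feed the \emph{homotopy-invariant} functor the correct homotopy type rather than take naive invariants.

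With these two ingredients the conclusion is formal. Since $X$ is maximal, the first step gives $C^*(X;\F2)\simeq H^*(X;\F2)$ in $D(\F2[G])$ with trivial action. Applying $\mathcal L^n_\Gamma$ and using naturality yields $C^*(X^\Gamma;\F2)\simeq\mathcal L^n_\Gamma\bigl(H^*(X;\F2)\bigr)$ in $D(\F2[G])$. The right-hand side is again formal with trivial action: over the field $\F2$ every complex is formal, so $\mathcal L^n_\Gamma$ of a complex with zero differential again has zero differential, while functoriality applied to the trivial relation $\sigma^*=\id$ shows the $G$-action is trivial as well. By the equivalence of the first step this says exactly that $X^\Gamma$ is maximal; taking $\Gamma=\mathfrak S_n$ gives the statement for the symmetric powers $X^{(n)}$.

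The two places I expect to require genuine care — and which I regard as the main obstacle — are (a) the precise passage ``degeneration $\Rightarrow$ formality'' in $D(\F2[G])$, which must be run through the $k$-invariants, since $\F2[G]$ is not semisimple and the degeneration of \eqref{eqn:LerarySerre} is a priori weaker than formality; and (b) the equivariant, chain-level refinement of Dold's theorem, namely checking that the homotopy functor $\mathcal L^n_\Gamma$ may legitimately be evaluated on the formal model $H^*(X;\F2)$ inside $D(\F2[G])$ rather than on the genuine cochains. As a consistency check one may first note, via Lemma \ref{lemma:Product}, that $X^n$ is itself maximal, so that the entire content of the theorem is the compatibility of the (non-exact, characteristic-$2$) quotient by $\Gamma$ with equivariant formality — which is exactly what the homotopy-invariance of $\mathcal L^n_\Gamma$ supplies.
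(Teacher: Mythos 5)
The paper offers no proof of Theorem \ref{thm:Franz}: it is recorded as a quotation of Franz's theorem \cite{Franz}, so there is no in-paper argument to compare against. Your proposal is, in substance, the route taken in \cite{Franz}, specialized to $G=\Z/2$ and $\F2$-coefficients: the two pillars are (a) the reformulation of equivariant formality as the splitting of $C^*(X;\F2)$ in $D(\F2[G])$ into its cohomology with trivial action, and (b) Dold's theorem that the $\F2$-chains of a $\Gamma$-product are a quasi-isomorphism--preserving functor of the $\F2$-chains of $X$, naturally in $X$ and hence compatibly with the $G$-action. Both statements are true, and your identification of them as the only points needing real work is accurate. For (b), the naturality you need is available because the construction is degreewise on simplicial sets, $\F2[(K^n/\Gamma)_q]=(\F2[K_q]^{\otimes n})_\Gamma$, so a genuine $G$-action on $K$ induces one on the output, and equivariant quasi-isomorphisms are preserved because quasi-isomorphism in $D(\F2[G])$ is detected on underlying complexes.

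Two points deserve repair. First, I would not argue (a) via ``degeneration of \eqref{eqn:LerarySerre} forces the $k$-invariants to vanish'': the higher $k$-invariants are only defined, modulo indeterminacy, after the lower ones vanish, so this phrasing hides an induction. It is cleaner to induct directly on the cohomological amplitude of $C=C^*(X;\F2)$ using surjectivity of the restriction map rather than degeneration: with $q$ the top degree and the triangle $\tau_{<q}C\to C\to H^q(C)[-q]\xrightarrow{\ \kappa\ }\tau_{<q}C[1]$, surjectivity of $H^q_G(C)\to H^q(C)$ forces the induced map $H^0(G,H^q(C))\to H^{q+1}_G(\tau_{<q}C)$ to vanish by exactness; since $H^q(C)$ is a trivial module, a class in $\Ext^*_{\F2[G]}(H^q(C),-)$ vanishing after precomposition with every $\F2\to H^q(C)$ is zero, so $\kappa=0$ once the inductive hypothesis has split $\tau_{<q}C$; the hypothesis does pass to $\tau_{<q}C$ via the long exact sequence because $H^{i-q}(G,H^q(C))=0$ for $i<q$. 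Second, the sentence ``$\mathcal L^n_\Gamma$ of a complex with zero differential again has zero differential'' is not correct as stated and is not what you need: $\mathcal L^n_\Gamma(H^*(X;\F2))$ is a genuinely nontrivial complex. The correct point is that, by functoriality applied to $\sigma^*=\id$, the group $G$ acts on it by the identity on the nose, and over the field $\F2$ any complex of trivial $\F2[G]$-modules splits $G$-equivariantly as its cohomology. With these adjustments the argument is complete; your M\"obius-band example correctly pinpoints why the naive invariants computation must be replaced by Dold's functor in characteristic $2$.
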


\subsection{Projective bundle}
Let $X$ be complex manifold or complex algebraic variety, and $E$ a \textit{holomorphic} vector bundle of rank $r+1$ on $X$. The relative projectivization gives rise to a complex manifold $\PP(E):=\PP_X(E)$ which is a $\mathbb{C}\mathbb{P}^r$-bundle over $X$:
$$\pi\colon \PP(E)\to X,$$
called the \textit{projective bundle} associated to $E$.
If moreover $(X, \sigma)$ is a real variety and $(E, \tilde{\sigma})$ is a \textit{holomorphic} real vector bundle on it (Remark \ref{rmk:HolomorphicRealBundle}), then $\PP(E)$ inherits a natural real structure, sending $(x, [v])$ to $(\sigma(x), [\tilde\sigma(v)])$, which makes $\pi$ a morphism of real varieties. (In the algebraic setting, it follows from the simple observation that the projective bundle construction is defined over the base field.)

\begin{prop}
	\label{prop:ProjBun}
Let $E$ be a holomorphic real vector bundle on a real variety $X$. Then $X$ is maximal if and only if the projective bundle $\PP(E)$ is maximal.
\end{prop}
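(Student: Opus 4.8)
The plan is to use the maximality criterion from Proposition~\ref{prop:MaximalCriterion}, specifically the equivariant formality condition (iii), which is well-suited to fiber-bundle arguments because equivariant cohomology is functorial and the projective bundle $\pi\colon \PP(E)\to X$ is a morphism of real varieties. The strategy is to reduce the maximality of $\PP(E)$ to that of $X$ by comparing the ordinary and equivariant projective bundle formulas. First I would establish the ordinary Leray--Hirsch/projective bundle theorem with $\F2$-coefficients: since $E$ is a holomorphic (hence complex) rank-$(r+1)$ bundle, letting $\xi:=c_1(\mathcal{O}_{\PP(E)}(1))\in H^2(\PP(E),\F2)$ be the first Chern class of the relative tautological bundle, the powers $1,\xi,\dots,\xi^r$ form a basis of $H^*(\PP(E),\F2)$ as a free module over $H^*(X,\F2)$, giving
\[
b_*(\PP(E),\F2)=(r+1)\cdot b_*(X,\F2).
\]

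Next I would prove the equivariant analogue. The key input is the equivariant Chern class machinery of Section~\ref{subsec:ChernClass}: because $(E,\tilde\sigma)$ is a holomorphic real vector bundle, it defines a class in $\KR^0(X)$, and the relative tautological line bundle $\mathcal{O}_{\PP(E)}(1)$ is itself naturally a real line bundle on $\PP(E)$, so it carries an equivariant first Chern class $\xi^G:=c_1^G(\mathcal{O}_{\PP(E)}(1))\in H^2_G(\PP(E),\F2)$ that restricts to the ordinary class $\xi$ under $H^*_G(\PP(E),\F2)\to H^*(\PP(E),\F2)$. I would then invoke the equivariant projective bundle theorem, stating that $H^*_G(\PP(E),\F2)$ is a free $H^*_G(X,\F2)$-module with basis $1,\xi^G,\dots,(\xi^G)^r$; this follows from the Leray--Hirsch principle applied to the fibration $\PP(E)_G\to X_G$ with fiber $\mathbb{C}\mathbb{P}^r$, since the restriction of $\xi^G$ to each fiber generates its cohomology freely.

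With both module structures in hand, the equivalence follows by a clean diagram chase. Consider the commutative square whose horizontal arrows are the restriction maps $H^*_G(-,\F2)\to H^*(-,\F2)$ and whose vertical arrows are pullback along $\pi$ and $\pi_G$. The restriction map for $\PP(E)$ sends $\sum_{i=0}^r \pi_G^*(a_i)\,(\xi^G)^i$ to $\sum_{i=0}^r \pi^*(\bar a_i)\,\xi^i$, where $\bar a_i$ is the restriction of $a_i$. Since $1,\xi,\dots,\xi^r$ is a basis downstairs, surjectivity of the restriction map for $\PP(E)$ is equivalent to surjectivity of the restriction map for $X$ on each coefficient slot simultaneously; that is, $\PP(E)$ is equivariantly formal if and only if $X$ is. By Proposition~\ref{prop:MaximalCriterion}, this is exactly the claimed equivalence of maximalities.

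The main obstacle I anticipate is establishing the \emph{equivariant} projective bundle formula with the correct handling of real structures and Tate twists, since equivariant cohomology of a nontrivial $G$-space need not be free over $H^*_G(\pt,\F2)=H^*(BG,\F2)=\F2[u]$. The safe route is to verify the Leray--Hirsch hypothesis fiberwise for the Borel fibration: one checks that $\xi^G$ restricts on each fiber $\mathbb{C}\mathbb{P}^r$ to a polynomial generator of $H^*(\mathbb{C}\mathbb{P}^r,\F2)$, which holds because $\mathcal{O}(1)$ restricts fiberwise to the tautological bundle. Working with $\F2$-coefficients is a genuine simplification here, since the Tate twists $\Z(r)$ all collapse to the trivial module $\F2$, so the equivariant Chern classes live in untwisted $H^*_G(-,\F2)$ and the module-basis argument goes through without sign subtleties. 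Once these foundational statements are in place, the rest is the formal comparison described above.
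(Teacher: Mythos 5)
Your proof is correct, but it takes a genuinely different route from the paper's. The paper never touches equivariant cohomology here: it applies Leray--Hirsch twice, once to the complex $\mathbb{C}\mathbb{P}^r$-bundle $\PP(E)\to X$ using the ordinary first Chern class $\xi$, and once to the real locus $\PP(E)(\R)\to X(\R)$, which is an $\mathbb{R}\mathbb{P}^r$-bundle trivialized cohomologically by the \emph{real} Chern class $\xi_R=c_1^R(\mathcal{O}_\pi(1))\in H^1(\PP(E)(\R),\F2)$; comparing $b_*(\PP(E),\F2)=(r+1)\,b_*(X,\F2)$ with $b_*(\PP(E)(\R),\F2)=(r+1)\,b_*(X(\R),\F2)$ gives the equivalence directly from the definition of maximality (plus Smith--Thom). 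You instead stay entirely upstairs, proving an equivariant projective bundle formula for the Borel fibration $\PP(E)_G\to X_G$ via Leray--Hirsch applied to $\xi^G=c_1^G(\mathcal{O}_\pi(1))$, and then reading off criterion (iii) of Proposition~\ref{prop:MaximalCriterion} slot by slot; your worry about freeness over $H^*(BG,\F2)$ is correctly dispatched, since Leray--Hirsch only needs $\xi^G$ to restrict to a generator on each $\mathbb{C}\mathbb{P}^r$-fiber, which it does because it lifts $\xi$. Note that your forward implication does not even need the equivariant Leray--Hirsch (lifting the coefficients $b_i$ suffices), while the converse genuinely uses it to put an arbitrary equivariant class in the form $\sum_i\pi_G^*(a_i)(\xi^G)^i$ before comparing coefficients. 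What each approach buys: the paper's is more elementary and yields the quantitative statement that the Smith--Thom defect of $\PP(E)$ is exactly $(r+1)$ times that of $X$; yours avoids the real locus entirely, is closer in spirit to the equivariant-formality arguments used throughout the rest of the paper, and foreshadows the motivic reformulation of Section~\ref{subsec:Applications}, where the same result drops out of the projective bundle formula for motives.
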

\begin{proof}
	Let $\mathcal{O}_{\pi}(1)$ be the relative Serre bundle. Denote by $\xi:=c_1^G(\mathcal{O}_\pi(1))\in H^2_G(\PP(E), \F2)$ its first equivariant Chern class as well as its image, the usual first Chern class, in $H^2(\PP(E), \F2)$, and denote by $\xi_R:=c_1^R(\mathcal{O}_\pi(1))\in H^1(\PP(E)(\R), \F2)$ its first real Chern class (see \S \ref{subsec:ChernClass}). 
	
	Let $r+1$ be the rank of $E$. By definition, the restriction of $\xi$ to a fiber $\mathbb{C}\mathbb{P}^r$ is a generator of its cohomology ring $H^*(\mathbb{C}\mathbb{P}^r,  \F2)=\F2[\xi]/(\xi^{r+1})$. By the Leray--Hirsch theorem, 
	\begin{equation}
		H^*(\PP(E), \F2)\cong H^*(X, \F2)\otimes H^*(\mathbb{C}\mathbb{P}^r, \F2).
	\end{equation}
	In particular, 
	\begin{equation}\label{eqn:BettiProjBun}
			b_*(\PP(E), \F2)=b_*(X, \F2)b_*(\mathbb{C}\mathbb{P}^r, \F2)=(r+1)b_*(X, \F2).
	\end{equation}
	On the other hand, it is clear that the real locus $\PP(E)(\R)$ is a $\mathbb{R}\mathbb{P}^r$-bundle over $X(\R)$. Again, by construction, the restriction of $\xi_R$ to a fiber is a generator of the cohomology ring $H^*(\mathbb{R}\mathbb{P}^r, \F2)=\F2[\xi_R]/(\xi_R^{r+1})$. By the Leray--Hirsch theorem,
	\begin{equation}
	H^*(\PP(E)(\R), \F2)\cong H^*(X(\R), \F2)\otimes H^*(\mathbb{R}\mathbb{P}^r, \F2).
	\end{equation}
	Consequently, 
	\begin{equation}\label{eqn:BettiProjBunReal}
	b_*(\PP(E)(\R), \F2)=b_*(X(\R), \F2)b_*(\PP^r_{\R}, \F2)=(r+1)b_*(X(\R), \F2).
	\end{equation}
	Comparing \eqref{eqn:BettiProjBun} and \eqref{eqn:BettiProjBunReal}, we see that $X$ is maximal if and only if $\PP(E)$ is maximal.
\end{proof}

\begin{rmk}
	\label{rmk:FlagBundles}
	Proposition \ref{prop:ProjBun} can be easily generalized to flag bundles (e.g.~Grassmannian bundles) associated to a holomorphic real vector bundle. The argument is similar: in the above proof, one replaces the usual (resp.~equivariant, real) first Chern class of the relative Serre bundle by the usual (resp.~equivariant, real) Chern classes of the universal/tautological bundles, and apply Leray--Hirsch in the same way. A more conceptual proof is via motives (see \S \ref{subsec:Applications}).
\end{rmk}

\subsection{Blow-up}
Let $X$ be a smooth real variety and $Y$ be a smooth real subvariety of codimension $c\geq 2$. Then the blow-up $\Bl_YX$, as well as the exceptional divisor $E\cong \PP(N_{Y/X})$, has a natural real structure, where $N_{Y/X}$ is the normal bundle. We have the commutative diagram of morphisms of real varieties.
\begin{equation}
\label{diag:Blowup}
	\xymatrix{
E \ar[d]_{p}\ar[r]^-j&\Bl_YX  \ar[d]^{\tau}	\\
Y\ar@{^(->}[r]^{i} & X
}
\end{equation}
\begin{prop}
	\label{prop:Blowup}
	If $X$ and $Y$ are both maximal, then $\Bl_YX$ is maximal.
\end{prop}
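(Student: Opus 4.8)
The plan is to compute both total Betti numbers via the blow-up formula and compare them, exactly in the spirit of the proof of Proposition \ref{prop:ProjBun}. First I would recall the classical description of the cohomology of a blow-up. On the complex side, the blow-up formula (valid with $\F2$-coefficients, as it is a formal consequence of the projective bundle formula for $E\cong \PP(N_{Y/X})$ and the Thom--Gysin sequence) gives a decomposition
\begin{equation*}
H^*(\Bl_YX, \F2)\cong H^*(X, \F2)\oplus\Bigl(\bigoplus_{k=1}^{c-1}H^{*-2k}(Y, \F2)\Bigr),
\end{equation*}
coming from the fact that the exceptional divisor $E=\PP(N_{Y/X})$ is a $\PP^{c-1}$-bundle over $Y$ and contributes $H^*(Y)\otimes H^*(\PP^{c-1})$, while the pullback $\tau^*$ embeds $H^*(X)$. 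Taking total Betti numbers yields
\begin{equation*}
b_*(\Bl_YX, \F2)=b_*(X, \F2)+(c-1)\,b_*(Y, \F2).
\end{equation*}

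Next I would carry out the identical analysis on the real locus. The key geometric observation is that the real structures on $X$, $Y$, $\Bl_YX$ and $E$ are all compatible, so that $(\Bl_YX)(\R)$ is the blow-up of the real manifold $X(\R)$ along the real submanifold $Y(\R)$, with real codimension $c$, and the real exceptional divisor is $E(\R)\cong \PP(N_{Y(\R)/X(\R)})$, an $\R\PP^{c-1}$-bundle over $Y(\R)$. The same blow-up formula, now over $\F2$ for real manifolds (which holds because $\R\PP^{c-1}$ has free $\F2$-cohomology of total dimension $c$ and the Leray--Hirsch theorem applies just as in Proposition \ref{prop:ProjBun}), gives
\begin{equation*}
b_*\bigl((\Bl_YX)(\R), \F2\bigr)=b_*(X(\R), \F2)+(c-1)\,b_*(Y(\R), \F2).
\end{equation*}
Comparing the two displayed formulas term by term and invoking the maximality of $X$ and $Y$ (so $b_*(X(\R))=b_*(X)$ and $b_*(Y(\R))=b_*(Y)$) shows that $b_*((\Bl_YX)(\R))=b_*(\Bl_YX)$, which is precisely the maximality of $\Bl_YX$.

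The main obstacle, and the point requiring the most care, is justifying that the $\F2$-blow-up formula holds simultaneously and compatibly on both the complex and the real level. On the complex side this is standard, but over $\F2$ one must be sure the decomposition is additive rather than merely giving a spectral sequence; this is guaranteed because the relevant Gysin/Leray--Hirsch arguments only need the Chern class $\xi$ of $\mathcal{O}_E(1)$ to restrict to a generator on fibers, exactly as exploited in Proposition \ref{prop:ProjBun}. The real analogue is the more delicate half: one must check that $(\Bl_YX)(\R)$ really is the differentiable blow-up of $X(\R)$ along $Y(\R)$ with the expected normal-bundle structure, and that $\R\PP^{c-1}$ contributes a free rank-$c$ module over $\F2$ so that Leray--Hirsch applies. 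An alternative, cleaner route would be to verify equivariant formality directly using Proposition \ref{prop:MaximalCriterion}(iii): the long exact sequence relating $H^*_G(\Bl_YX)$, $H^*_G(X)$ and $H^*_G(E)$ is compatible with restriction to non-equivariant cohomology, and surjectivity for $X$ and $Y$ (hence for the projective bundle $E$ by Proposition \ref{prop:ProjBun}) forces surjectivity for $\Bl_YX$ by a diagram chase. I would likely present the Betti-number comparison as the main argument and remark that the motivic or equivariant-formality viewpoint gives a conceptual reproof.
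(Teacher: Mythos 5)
Your argument is correct, but your main line of reasoning is genuinely different from the paper's. The paper never touches the real locus: it writes the commutative square comparing the map $(\alpha,\beta_1,\dots,\beta_{c-1})\mapsto \tau^*(\alpha)+\sum_k j_*(p^*(\beta_k)\smile\xi^{k-1})$ on equivariant cohomology with the same map on ordinary cohomology (using that $\tau$, $j$, $p$ are real morphisms and $\xi$ lifts to the equivariant Chern class $c_1^G(\mathcal{O}_p(1))$); the bottom map is an isomorphism by the complex blow-up formula, the left vertical map is surjective by the maximality of $X$ and $Y$, so the restriction $H^*_G(\Bl_YX,\F2)\to H^*(\Bl_YX,\F2)$ is surjective and Proposition \ref{prop:MaximalCriterion} concludes. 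Your primary argument instead compares total Betti numbers on both sides, which forces you to identify $(\Bl_YX)(\R)$ with the differentiable blow-up of $X(\R)$ along $Y(\R)$ and to establish the $\F2$-blow-up formula for \emph{real} manifolds (i.e., that the Leray--Hirsch/Gysin decomposition for the $\R\PP^{c-1}$-bundle $E(\R)\to Y(\R)$ really splits off additively, with degree shifts by $k$ rather than $2k$). You correctly flag this as the delicate half; it is true, but it is an extra nontrivial input that the paper's route avoids entirely — the paper needs only the classical complex blow-up formula plus formal surjectivity, at the cost of invoking equivariant Chern classes. Your closing remark about verifying equivariant formality via a diagram chase compatible with restriction is essentially the paper's proof, so you have both routes in hand; if you present the Betti-number version as the main argument, you should either prove or precisely cite the real blow-up formula with $\F2$-coefficients, since an inequality in place of the equality there would only reproduce Smith--Thom and prove nothing.
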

\begin{proof}
We have the following commutative diagram (the coefficients $\F2$ are omitted):
\begin{equation}
\label{diag:BlowupFormula}
	\xymatrix{
H^*_G(X)	\oplus H^{*-2}_G(Y)\oplus \cdots \oplus H^{*-2(c-1)}_G(Y) \ar[rr] \ar@{->>}[d]&& H^*_G(\Bl_YX)\ar[d]\\
H^*(X)	\oplus H^{*-2}(Y)\oplus \cdots \oplus H^{*-2(c-1)} (Y) \ar[rr]_-{\simeq}&& H^*(\Bl_YX)
}
\end{equation}
where all the vertical arrows are restriction maps, and both horizontal maps are given by
\[(\alpha, \beta_1, \dots, \beta_{c-1})\mapsto \tau^*(\alpha)+\sum_{k=1}^{c-1} j_*(p^*(\beta_k)\smile \xi^{k-1}),\]
where $\xi$ is $c^G_1(\mathcal{O}_{p}(1))\in H^2_G(E)$ in the top row and is $c_1(\mathcal{O}_{p}(1))\in H^2(E)$ in the bottom row; the commutativity of the diagram follows from the fact that all maps in \eqref{diag:Blowup} are real maps and $\mathcal{O}_p(1)$ is an $\R$-line bundle on $E$.

Now in the diagram \eqref{diag:BlowupFormula}, the bottom arrow is an isomorphism by the blow-up formula\footnote{In fact the top arrow is also an isomorphism: this is the equivariant version of the blow-up formula; but we do not need this.}, and the left vertical arrow is surjective by the maximalities of $X$ and $Y$ (Proposition \ref{prop:MaximalCriterion}). Therefore the right vertical arrow is surjective. By Proposition \ref{prop:MaximalCriterion}, $\Bl_YX$ is maximal.
\end{proof}

\begin{rmk}[Flips and flops]
	\label{rmk:FlipFlop}
	There are other types of basic birational transformations that preserve maximality. For instance, a standard flip\footnote{Also called ordinary flip or Atiyah flip.} (or flop) of a maximal real variety along a maximal real flipping/flopping center is again maximal (this can be deduced from the blow-up formula, or by using Jiang \cite[Corollary 3.8]{Jiang_ChowProjectivization}).  Similarly, the Mukai flop of a maximal real hyper-K\"ahler manifold along a real flopping center is maximal (use for example \cite[Theorem 3.6]{FHPL-rank2}).
\end{rmk}

We give an example of application. Given a smooth compact complex manifold $X$ and an integer $n$, the configuration space $F(X, n)$ is the open subset of $X^n$ parameterizing $n$ ordered \textit{distinct} points in $X$.   Fulton  and MacPherson constructed in \cite{FultonMacPherson-CompacitificationConf} a compactification $X[n]$ of $F(X, n)$, such that the complement is a simple normal crossing divisor with geometric description. The quickest way to define $X[n]$ is as the closure of the embedding $F(X,n)\subset X^n\times \prod_{I} \Bl_{\delta}(X^I)$, where $I$ runs through all subsets of $\{1, \cdots, n\}$ with $|I|\geq 2$, $X^I$ is the $|I|$-the power of $X$, and $\delta$ is the small(est) diagonal of $X^I$. Clearly, a real structure on $X$ induces a real structure on $X[n]$.

\begin{prop}\label{prop:Configuration}
If $X$ is a maximal real variety, then $X[n]$, equipped with the induced real structure, is also maximal.
\end{prop}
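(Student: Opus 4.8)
The plan is to prove that the Fulton–MacPherson compactification $X[n]$ is maximal whenever $X$ is, by realizing $X[n]$ as the result of an iterated sequence of blow-ups along maximal real centers, starting from the maximal real variety $X^n$, and then invoking Proposition~\ref{prop:Blowup} repeatedly. The key input is the well-known \emph{inductive blow-up description} of the Fulton–MacPherson space (see \cite{FultonMacPherson-CompacitificationConf}): $X[n]$ is obtained from $X^n$ by a finite sequence of blow-ups along smooth centers, where each center is (a strict transform of) a diagonal $\delta_I \subset X^n$ for a subset $I \subseteq \{1,\dots,n\}$ with $|I|\geq 2$, performed in an order of increasing cardinality of $I$ (equivalently, in order of increasing dimension of the center). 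Since the entire construction is defined over the base field $\R$ (all diagonals are real subvarieties and blowing up commutes with the real structure), every intermediate variety and every center carries a compatible real structure, as already noted just before the statement.

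First I would recall the precise form of the iterated construction. Set $X^{[0]} := X^n$, and list the subsets $I$ with $|I|\geq 2$ in an order $I_1, I_2, \dots, I_m$ that is compatible with reverse inclusion (smallest diagonals, i.e.\ largest $|I|$, blown up first), so that strict transforms stay smooth. Define $X^{[k]} := \Bl_{\tilde{\delta}_{I_k}} X^{[k-1]}$, where $\tilde{\delta}_{I_k}$ is the strict transform of the diagonal $\delta_{I_k}$ in $X^{[k-1]}$; then $X^{[m]} \cong X[n]$. The central claim I would verify is that each blow-up center $\tilde{\delta}_{I_k}$ is a maximal real variety: the original diagonals $\delta_I$ are isomorphic (as real varieties) to products of $X$ with itself a suitable number of times, hence maximal by Lemma~\ref{lemma:Product}, and I must argue that this maximality survives the strict-transform operation.

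The hard part will be precisely this last point: showing that the successive strict transforms of the diagonals remain maximal. One clean approach is to note that in the Fulton–MacPherson construction, each strict transform $\tilde{\delta}_{I_k}$ is itself isomorphic to a Fulton–MacPherson-type space (or a product of such spaces) built on fewer marked points, so maximality follows by an induction on $n$ carried out simultaneously with the blow-up induction: the exceptional and intersection structure makes $\tilde{\delta}_{I}$ a bundle-like object over lower-dimensional $X[\,\cdot\,]$-spaces whose maximality is already known. Concretely, I would set up a double induction — on $n$ (the number of points) and on the blow-up step $k$ — using Proposition~\ref{prop:Blowup} at each stage: given that $X^{[k-1]}$ is maximal (inductive hypothesis on $k$) and that the center $\tilde{\delta}_{I_k}$ is maximal (from the inductive hypothesis on $n$, via its identification with a smaller Fulton–MacPherson space), Proposition~\ref{prop:Blowup} yields that $X^{[k]}$ is maximal. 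Since $X^{[0]} = X^n$ is maximal by Lemma~\ref{lemma:Product}, the induction proceeds and concludes that $X^{[m]} = X[n]$ is maximal.

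Alternatively, and perhaps more conceptually, one could bypass the delicate strict-transform bookkeeping by appealing to the motivic viewpoint promised in Section~\ref{sec:Motive}: since each blow-up center is a product of copies of $X$, the motive of $X[n]$ lies in the tensor subcategory generated by the motive of $X$, so $X[n]$ is \emph{motivated} by $X$ and hence maximal by the general principle (Corollary~\ref{cor:MotivationMaximal}). I would likely present the blow-up argument as the main proof and remark that the motivic argument gives a second, cleaner justification. The only genuine obstacle in either route is confirming that the relevant centers are maximal at each stage; the motivic formulation makes this transparent because the blow-up formula shows the motive never leaves the subcategory generated by $X$, so no separate maximality verification for the intermediate centers is needed.
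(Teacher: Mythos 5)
Your proposal is correct and follows essentially the same route as the paper: realize $X[n]$ as an iterated sequence of real blow-ups starting from the maximal variety $X^n$, check inductively that every intermediate variety and every blow-up center is maximal, and apply Lemma \ref{lemma:Product} and Proposition \ref{prop:Blowup} repeatedly. The only (inessential) difference is that the paper invokes Fulton--MacPherson's original one-point-at-a-time inductive construction, whose centers are themselves successive blow-ups of products of copies of $X$, rather than the ``blow up the strict transforms of the diagonals of $X^n$ in an order compatible with inclusion'' presentation you use; both versions hinge on exactly the inductive verification of the centers' maximality that you correctly single out as the delicate point, and your motivic alternative is precisely item (3) of \S\ref{subsec:Applications} of the paper.
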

\begin{proof}
	We use the geometric inductive construction of Fulton--MacPherson's compactification. We refer the reader to \cite[\S 3]{FultonMacPherson-CompacitificationConf} for the details. Roughly speaking, starting from $X^n$, $X[n]$ is obtained as a sequence of $ 2^n-n-1$ blow-ups along centers that are themselves successive blow-ups of similar form. By repeatedly using Proposition \ref{prop:Blowup}, one shows by induction that all the varieties obtained as well as the blow-up centers are maximal.
\end{proof}

\subsection{Generically finite surjection of odd degree}
\begin{prop}
	\label{prop:SurjectionOddDegree}
	Let $f\colon X\to Y$ be a surjective proper real morphism between smooth real varieties. Assume that 
	$f$ admits a rational multi-section of \textit{odd} degree; for example, when $f$ is generically finite of \textit{odd} degree. If $X$ is maximal, then $Y$ is maximal.
\end{prop}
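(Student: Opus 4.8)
The plan is to use the equivariant-formality criterion (Proposition \ref{prop:MaximalCriterion}) and exploit the fact that pushforward and pullback along a degree-$d$ map compose to multiplication by $d$, which is invertible over $\F2$ precisely when $d$ is odd. Concretely, the strategy is to build a commutative square relating the equivariant and ordinary cohomology of $X$ and $Y$ through $f^*$ and a transfer (Gysin) map, and to conclude surjectivity of the restriction map for $Y$ from that for $X$ by a retraction argument.

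First I would recall that a proper surjective morphism $f\colon X\to Y$ of smooth real varieties induces both a pullback $f^*\colon H^*(Y,\F2)\to H^*(X,\F2)$ and a pushforward (Gysin map) $f_*\colon H^*(X,\F2)\to H^*(Y,\F2)$, and similarly on equivariant cohomology $f^*_G$ and $f_{*,G}$, all compatible with the vertical restriction maps $\rho_X\colon H^*_G(X,\F2)\to H^*(X,\F2)$ and $\rho_Y\colon H^*_G(Y,\F2)\to H^*(Y,\F2)$, since $f$ is a morphism of real varieties. The key computation is the projection-formula/degree identity: if $s$ is a rational multisection of odd degree $d$ (in particular when $f$ is generically finite of odd degree, taking $s$ to be the fundamental class), then the composite
\begin{equation}
f_*\circ f^*\colon H^*(Y,\F2)\to H^*(Y,\F2)
\end{equation}
equals multiplication by $d$, which over $\F2$ is the identity since $d$ is odd. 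I would note that the existence of a rational multisection of odd degree is exactly what is needed to run this argument even when $f$ is not generically finite: the multisection $s\colon Y \dashrightarrow X$ gives a correspondence with $f_* \circ s_* = d \cdot \id$ on $H^*(Y,\F2)$, and the same holds equivariantly because the multisection is defined over $\R$.

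Next I would assemble the diagram whose columns are the restriction maps $\rho_X$ and $\rho_Y$ and whose horizontal arrows on the top are $s_*^G$ then $f_{*,G}$ (equivariant), and on the bottom are $s_*$ then $f_*$ (ordinary). Commutativity follows from the compatibility of Gysin maps with restriction to ordinary cohomology (this is where one uses that all maps involved are real maps). The bottom composite is the identity on $H^*(Y,\F2)$ by the odd-degree computation above. Now take any class $y\in H^*(Y,\F2)$; since $X$ is maximal, $\rho_X$ is surjective, so $s_*(y)=\rho_X(\tilde u)$ for some $\tilde u\in H^*_G(X,\F2)$. Then $\rho_Y(f_{*,G}(\tilde u)) = f_*(\rho_X(\tilde u)) = f_*(s_*(y)) = y$, exhibiting $y$ in the image of $\rho_Y$. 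Hence $\rho_Y$ is surjective, and by Proposition \ref{prop:MaximalCriterion} the variety $Y$ is maximal.

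The main obstacle I anticipate is technical rather than conceptual: one must make precise the Gysin/transfer formalism in $G$-equivariant $\F2$-cohomology and verify its compatibility with the Krasnov--van Hamel restriction maps, including the case of a merely \emph{rational} multisection, where $s_*$ must be defined via the closure of a correspondence and one must check it still satisfies $f_* \circ s_* = d\cdot\id$ modulo $2$ (using that indeterminacy loci have positive codimension and do not affect the top-degree intersection number). For the generically finite case this is the standard degree formula and is unproblematic over $\F2$; the subtlety lies only in handling the positive-dimensional fibers through the chosen odd multisection. Once the equivariant Gysin map and its compatibility with $\rho$ are in hand, the retraction argument above is immediate.
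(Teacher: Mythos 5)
Your proposal is correct and follows essentially the same route as the paper: a commutative square of pushforwards and restriction maps, with the projection formula $f_*(f^*(-)\smile [S])=\deg(S/Y)\cdot\id$ for the closure $S$ of the odd-degree multisection showing that $f_*$ is (split) surjective mod $2$, whence surjectivity of $H^*_G(Y,\F2)\to H^*(Y,\F2)$ follows from that of $X$. The technical worries you raise at the end are handled in the paper exactly as you suggest, by working with the closure of the multisection and the standard Gysin formalism, so there is nothing missing.
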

\begin{proof}
	We have the following commutative diagram
	\begin{equation}
		\xymatrix{
	H^*_G(X, \F2) \ar[r]^{f^G_*}\ar@{->>}[d]& H^*_G(Y, \F2) \ar[d]\\
	H^*(X, \F2) \ar@{->>}[r]^{f_*}& H^*(Y, \F2) 	
	}
	\end{equation}
	where the horizontal arrows are push-forwards and the vertical arrows are restriction maps. We claim that the bottom map is surjective. Indeed,  let $S$ be the closure of a rational multi-section of odd degree, then by projection formula we have that  $f_*(f^*(-)\smile [S])=\deg(S/Y)\cdot\id$, which is the identity since $\deg(S/Y)$ is odd and we are working with $\F2$-coefficients.
	Therefore $f_*$ is surjective.	
	The left vertical map is surjective by the maximality of $X$. Therefore the right vertical arrow is also surjective. By Proposition \ref{prop:MaximalCriterion}, it yields that $Y$ is maximal. 
\end{proof}

\begin{rmk}
	The statement does not hold for maps of even degree. As a counter-example, take a maximal real surface $S$ with non-maximal Hilbert square $S^{[2]}$  (such surface exists by \cite{Kharlamov-Rasdeaconu-HilbertSquare}, see Remark \ref{rmk:LossOfMaximality}), then the degree-2 quotient map $\Bl_{\Delta}(S\times S)\to S^{[2]}$ is from a maximal variety to a non-maximal one. 
\end{rmk}

\subsection{Albanese and Picard}
\label{sec:PicAlb}

Recall that for a projective complex manifold $X$, there are two naturally associated abelian varieties, namely,
its Picard variety 
$$\Pic^0(X):=\frac{H^1(X, \mathcal{O}_X)}{H^1(X, \Z)},$$
and 
its Albanese variety
$$\Alb(X):=\frac{H^0(X, \Omega_X^1)^\vee}{H_1(X, \Z)}.$$
A real structure on $X$ naturally induces real structures on $\Alb(X)$ and $\Pic^0(X)$ (see \cite[\S 2.7]{CilibertoPedrini-RealAV}), they are moreover \textit{proper} in the sense of \cite{CilibertoPedrini-RealAV}. The theory of real abelian varieties was developed by Comessatti \cite{Comessatti-RealAVI} \cite{Comessatti-RealAVII} ; see also \cite{CilibertoPedrini-RealAV} for a recent account.

\begin{prop}
	\label{prop:AlbMax}
	Let $X$ be a smooth projective real variety with $H_1(X, \Z)$ torsion-free. If $X$ is maximal,  then $\Alb(X)$ and $\Pic^0(X)$ are also maximal. In particular, the dual of a maximal real abelian variety is again maximal.
\end{prop}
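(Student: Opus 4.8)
The plan is to reduce the statement to an intrinsic criterion for a real abelian variety, and then verify that criterion for $\Alb(X)$ and $\Pic^0(X)$. The guiding observation is that for an abelian variety $A$ one has a ring isomorphism $H^*(A,\F2)\cong \bigwedge^\bullet H^1(A,\F2)$, so that $H^*(A,\F2)$ is generated in degree one. Since the restriction map $H^*_G(A,\F2)\to H^*(A,\F2)$ is a homomorphism of rings, it is surjective as soon as every class in $H^1(A,\F2)$ lifts. By the equivariant-formality criterion (Proposition \ref{prop:MaximalCriterion}(iii)), it therefore suffices to show: (a) $G$ acts trivially on $H^1(A,\F2)$, and (b) the degree-one restriction $H^1_G(A,\F2)\to H^1(A,\F2)$ is onto.

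The \emph{key lemma} I would isolate is that a real abelian variety $A$ with $A(\R)\neq\emptyset$ on which $G$ acts trivially on $H^1(A,\F2)$ is maximal. For (b), set $\Lambda=\pi_1(A)=H_1(A,\Z)$. The Borel fibration $A\hookrightarrow A_G\to BG$ gives, via the homotopy long exact sequence, a group extension $1\to\Lambda\to\pi_1(A_G)\to G\to 1$ in which $G$ acts on $\Lambda$ through the real structure. A real point of $A$ is a fixed point, hence yields a section of $A_G\to BG$ and splits the extension: $\pi_1(A_G)\cong \Lambda\rtimes G$. Using $H^1(-,\F2)=\Hom(\pi_1(-),\F2)$, the degree-one restriction map is identified with $\Hom(\Lambda\rtimes G,\F2)\to\Hom(\Lambda,\F2)^{G}$, and every $G$-invariant homomorphism $\Lambda\to\F2$ extends (by $0$ on the $G$-factor) to $\Lambda\rtimes G$; thus the map is surjective. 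Triviality of the action makes $\Hom(\Lambda,\F2)^{G}=H^1(A,\F2)$, giving (b), and the multiplicativity argument above then yields maximality.

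It remains to verify the hypotheses for $A=\Alb(X)$ and $B=\Pic^0(X)$. Both carry the induced real structure, both have the origin as a real point (so their real loci are nonempty), and both are abelian varieties, so the cohomological input is available. The real Albanese morphism $a\colon X\to \Alb(X)$ induces on $H_1(-,\Z)$ the quotient $H_1(X,\Z)\twoheadrightarrow H_1(X,\Z)/\mathrm{tors}=H_1(\Alb(X),\Z)$, which the torsion-freeness hypothesis turns into an isomorphism; hence $a^*\colon H^1(\Alb(X),\F2)\xrightarrow{\ \sim\ }H^1(X,\F2)$ is a $G$-equivariant isomorphism. For the Picard variety, $H_1(\Pic^0(X),\Z)=H^1(X,\Z)$ canonically, so $H^1(\Pic^0(X),\F2)=\Hom(H^1(X,\Z),\F2)$ is the $G$-dual of $H^1(X,\F2)$. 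Since $X$ is maximal, Proposition \ref{prop:MaximalCriterion}(ii) gives that $G$ acts trivially on $H^*(X,\F2)$, in particular on $H^1(X,\F2)$; this triviality transports to $H^1(\Alb(X),\F2)$ through $a^*$ and to $H^1(\Pic^0(X),\F2)$ because the dual of a trivial module is trivial. The lemma then applies and proves maximality of both $\Alb(X)$ and $\Pic^0(X)$. The final assertion is the special case $X=A$: one has $\Pic^0(A)=A^\vee$, so the dual of a maximal real abelian variety is again maximal.

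The main obstacle, and the only place where the special geometry of abelian varieties (rather than of a general maximal $X$) enters, is step (b): showing that the Leray--Serre spectral sequence of $A_G\to BG$ carries no differential out of $H^1(A,\F2)$, equivalently that $d_2=0$ on $E_2^{0,1}$. A real point forces this precisely because $H^*(A,\F2)$ is generated in degree one, which is exactly what the splitting $\pi_1(A_G)\cong\Lambda\rtimes G$ encodes; note that in general a real point does \emph{not} imply equivariant formality, so the exterior-algebra structure is essential. Secondary care is needed with the cohomological identifications: the torsion-freeness of $H_1(X,\Z)$ is what upgrades the Albanese comparison on $H^1(-,\F2)$ from an injection to an isomorphism, and one should record the $G$-equivariance of $H_1(\Alb(X),\Z)\cong H_1(X,\Z)$ and $H_1(\Pic^0(X),\Z)\cong H^1(X,\Z)$, which follows from the functoriality of these constructions in the real category (\cite{CilibertoPedrini-RealAV}).
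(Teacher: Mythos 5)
Your proof is correct, but it takes a genuinely different route from the paper. The paper's own argument is a direct count via the classical structure theory of real abelian varieties: it introduces the first Comessatti characteristic $\lambda_1(X)=\dim_{\F2}(1+\sigma_*)H^1(X,\F2)$, notes that maximality of $X$ forces $\lambda_1(X)=0$, and then invokes Ciliberto--Pedrini (Theorems 2.7.2 and 2.7.9 of \cite{CilibertoPedrini-RealAV}, going back to Comessatti) to identify $\Alb(X)(\R)$ and $\Pic^0(X)(\R)$ as Lie groups with $(\mathbb{R}/\mathbb{Z})^q\times(\mathbb{Z}/2)^{q-\lambda_1}$, whence both sides of the Smith--Thom inequality equal $2^{2q}$. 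You instead stay entirely within the equivariant-formality framework of Proposition \ref{prop:MaximalCriterion}: your key lemma --- a real abelian variety with a real point and trivial $G$-action on $H^1(\cdot,\F2)$ is equivariantly formal, proved by splitting $1\to\Lambda\to\pi_1(A_G)\to G\to 1$ via the section coming from a fixed point and using that $H^*(A,\F2)$ is an exterior algebra generated in degree one --- replaces the classification input altogether. What your approach buys: it is self-contained, it applies verbatim to real complex tori (recovering the paper's subsequent remark about dual tori), and it is more in the spirit of how maximality is propagated everywhere else in the paper (lifting ring generators to $H^*_G$), whereas the paper's proof of this particular proposition is the one place it switches to classification results; what the paper's approach buys is brevity, and additionally the finer Lie-group description of the real loci, which your cohomological argument does not see. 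Two small points you should nail down: first, you tacitly use a real base point to make the Albanese morphism a real morphism, so note explicitly that maximality forces $X(\R)\neq\emptyset$ (otherwise $b_*(X(\R),\F2)=0<b_*(X,\F2)$); second, the real structure on $\Pic^0(X)$ acts on the lattice $H^1(X,\Z)$ by $-\sigma^*$ rather than $\sigma^*$ (because $c_1(\overline{\sigma^*L})=-\sigma^*c_1(L)$), a sign that is invisible with $\F2$-coefficients but worth recording so the claimed $G$-equivariance is stated correctly.
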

\begin{proof}
	Recall that $\Alb(X)$ and $\Pic^0(X)$ are both of dimension $q:=h^{1,0}(X)$, the irregularity of $X$, and they both have total $\F2$-Betti number $2^{2q}$.
	Denote by $\sigma$ the real structure on $X$. Recall that the \textit{first Comessatti characteristic} is
	$$\lambda_1(X):=\dim_{\F2} (1+\sigma_*)H^1(X, \F2).$$
	The maximality of $X$ implies that $\sigma$ acts trivially on $H^1(X, \F2)$, hence $\lambda_1(X)=0$.
	
	We apply \cite[Theorem 2.7.2 and Theorem 2.7.9]{CilibertoPedrini-RealAV}, which say that $\Alb(X)(\R)$ and $\Pic^0(X)(\R)$ are both isomorphic as  Lie groups to 
	$(\mathbb{R}/\mathbb{Z})^q\times (\mathbb{Z}/2)^{q-\lambda_1}$. Since $\lambda_1=0$, the total $\F2$-Betti number is clearly $2^{2q}$, hence $\Alb(X)$ and $\Pic^0(X)$ are maximal. The assertion for dual abelian varieties follows since $\widehat{A}\cong \Pic^0(A)$, as real varieties.
\end{proof}

\begin{rmk}[Dual complex tori]
	In fact, the same argument shows that if a complex torus $T$ equipped  with a real structure is maximal, then its dual torus $\widehat{T}$, equipped with the natural real structure, is also maximal.
\end{rmk}

\begin{cor}[Jacobian]
	\label{cor:Jacobian}
	Let $C$ be a smooth projective real curve. If $C$ is maximal, then its Jacobian $J(C)$ is also maximal. Conversely, if $J(C)$ is maximal and $C(\R)\neq \emptyset$, then $C$ is also maximal. 
\end{cor}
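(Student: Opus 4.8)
The plan is to treat the two implications separately: the forward direction will follow directly from Proposition \ref{prop:AlbMax}, while the converse will be established via a real Abel--Jacobi embedding together with the maximality criterion of Proposition \ref{prop:MaximalCriterion}.

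For the forward implication I observe that the Jacobian is the Picard variety, $J(C)=\Pic^0(C)$, and that for a smooth projective curve of genus $g$ the group $H_1(C,\Z)\cong\Z^{2g}$ is torsion free. Hence Proposition \ref{prop:AlbMax} applies verbatim and yields that $J(C)=\Pic^0(C)$ is maximal whenever $C$ is; no further work is needed here.

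For the converse, the point is that the hypothesis $C(\R)\neq\emptyset$ furnishes a real base point $p_0\in C(\R)$, and hence a $G$-equivariant Abel--Jacobi embedding $\iota\colon C\hookrightarrow J(C)$, $x\mapsto\mathcal{O}_C(x-p_0)$, which is a morphism of real varieties (it is precisely the choice of a real base point that makes $\iota$ compatible with the two real structures, which explains the role of the hypothesis). I would then consider the commutative square attached to $\iota$, whose horizontal arrows are the restriction maps $H^*_G(-,\F2)\to H^*(-,\F2)$ and whose vertical arrows are $\iota^*$ in equivariant and in ordinary cohomology. Since $J(C)$ is maximal, Proposition \ref{prop:MaximalCriterion} gives that the top restriction map is surjective. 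If in addition the right-hand map $\iota^*\colon H^*(J(C),\F2)\to H^*(C,\F2)$ is surjective, then the composite $H^*_G(J(C),\F2)\to H^*(C,\F2)$ is surjective, which forces the bottom restriction map $H^*_G(C,\F2)\to H^*(C,\F2)$ to be surjective as well; by Proposition \ref{prop:MaximalCriterion} again this gives the maximality of $C$.

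The crux, and the step I expect to require the most care, is the surjectivity of $\iota^*$ on ordinary $\F2$-cohomology. I would argue as follows: the cohomology ring of the abelian variety is the exterior algebra $H^*(J(C),\F2)=\Lambda^*H^1(J(C),\F2)$, the Abel--Jacobi map induces an isomorphism $\iota^*\colon H^1(J(C),\F2)\xrightarrow{\sim}H^1(C,\F2)$, and $\iota^*$ is a ring homomorphism. Consequently the image of $\iota^*$ contains $H^0(C,\F2)$, all of $H^1(C,\F2)$, and every cup product of degree-one classes; since the pairing $H^1(C,\F2)\otimes H^1(C,\F2)\to H^2(C,\F2)$ is non-degenerate (it is the mod-$2$ reduction of the intersection form), these cup products already span $H^2(C,\F2)$. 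Thus $\iota^*$ is surjective in every degree, completing the argument. Note that the hypothesis $C(\R)\neq\emptyset$ enters only through the equivariance of $\iota$, whereas the surjectivity of $\iota^*$ on ordinary cohomology is a classical fact independent of the real structure.
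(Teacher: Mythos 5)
Your proof is essentially the paper's: the forward direction is the same appeal to Proposition \ref{prop:AlbMax} (with the observation that $H_1(C,\Z)$ is torsion free), and the converse uses the same commutative square attached to the Abel--Jacobi map together with Proposition \ref{prop:MaximalCriterion} and the isomorphism on $H^1$. The one place you diverge is degree $2$: the paper disposes of $H^2_G(C,\F2)\to H^2(C,\F2)$ directly, using the real point guaranteed by $C(\R)\neq\emptyset$ (its equivariant cycle class restricts to the generator of $H^2(C,\F2)$), whereas you deduce surjectivity in degree $2$ from the ring structure, via $H^*(J(C),\F2)=\Lambda^*H^1$ and the non-degeneracy of the mod-$2$ cup product on $H^1(C,\F2)$. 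Your variant is fine for $g\geq 1$, but note that it silently assumes $g\geq 1$: for $g=0$ one has $H^1(C,\F2)=0$ while $H^2(C,\F2)\neq 0$, so $\iota^*$ is not surjective and the cup-product argument gives nothing; you would need to handle that (trivial) case separately, e.g.\ exactly as the paper does, by using the class of a real point. Apart from this edge case the argument is correct.
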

\begin{proof}
The first claim is a special case of Proposition \ref{prop:AlbMax}, since the Jacobian of $C$ is identified with $\Alb(C)$ (and also with $\Pic^0(C)$ by the Abel--Jacobi theorem). Conversely, the Abel--Jacobi map $a: C\to J(C)$, which is a real morphism,  induces an isomorphism $a^*\colon H^1(J(C), \F2)\xrightarrow{\cong} H^1(C, \F2)$. In the following commutative diagram
\begin{equation*}
\xymatrix{
	H^1_G(J(C), \F2)  \ar@{->>}[d]\ar[r]^{a^*}&	H^1_G(C, \F2)\ar[d]\\
	H^1(J(C), \F2)\ar[r]^{a^*}_{\cong}  & 	H^1(C, \F2),
}
\end{equation*}
the left vertical map is surjective and the bottom map is an isomorphism. Therefore
the right vertical map is surjective. $H^j_G(C, \F2)\to H^j(C, \F2)$ is clearly surjective for $j=0$ and also for $j=2$ as $C(\R)\neq \emptyset$. Hence $C$ is maximal by Proposition \ref{prop:MaximalCriterion}.
\end{proof}

\begin{rmk}
	Corollary \ref{cor:Jacobian} can also be deduced from Franz's Theorem \ref{thm:Franz}: for $N\geq 2g+2$, the symmetric power $C^{(N)}$, which is maximal by  Theorem \ref{thm:Franz}, is a projective bundle over $J^N(C)$. By Proposition \ref{prop:ProjBun}, $J^{N}(C)$ is maximal. Since $C(\R)\neq \emptyset$, $J(C)$ is isomorphic to $J^N(C)$ as real varieties, hence is maximal.
\end{rmk}

\subsection{Intermediate Jacobians}
\label{sec:IntermediateJacobian}
For a smooth projective variety $X$ of odd dimension $n=2m+1$ with the only non-zero Hodge numbers $h^{i,i}(X)$ for all $0\leq i\leq n$ and $h^{m, m+1}=h^{m+1, m}$. Its intermediate Jacobian, denoted by $J(X)$, is a principally polarized abelian variety with the underlying complex torus 
\[J(X):=\frac{H^n(X, \C)}{F^{m+1}H^n(X, \C)\oplus H^n(X, \Z)_{\operatorname{tf}}}=\frac{H^{m, m+1}(X)}{H^n(X, \Z)_{\operatorname{tf}}},\]
and with polarization induced by the intersection pairing on $H^n(X, \Z)$.

Examples of such varieties include cubic threefolds, cubic fivefolds, quartic threefolds, quartic double solids, complete intersections of two even-dimensional quadrics, complete intersections of three odd-dimensional quadrics, odd-dimensional Gushel--Mukai varieties, etc.
%Let $X\subset \PP^{n+1}$ be a smooth cubic hypersurface of dimension $n$. Recall that 
%the Hilbert scheme of lines on $X$ is called the \textit{Fano variety of lines}, denoted by $F(X)$. Set-theoretically,
%$$F(X):=\{L\in \Gr(\PP^1, \PP^{n+1})~|~ L\subset X\}.$$
%As a scheme, $F(X)$ is the closed subscheme of the Grassmannian $\Gr(\PP^1, \PP^{n+1})$ defined by the section of $\Sym^3\mathcal{U}^\vee$ induced by $f$, where $\mathcal{U}$ is the tautological rank-2 subbundle over $\Gr(\PP^1, \PP^{n+1})$. By \cite{AltmanKleiman}, $F(X)$ is a smooth projective variety of dimension $2n-4$.
A real structure  $\sigma$ on $X$ induces a $\C$-anti-linear map $H^{m, m+1}(X)\to H^{m+1, m}(X)=\overline{H^{m, m+1}(X)}$ respecting $H^n(X, \Z)_{\operatorname{tf}}$, and thus a natural real structure on $J(X)$. The following result for the Fano surface is due to Krasnov \cite{Krasnov-Cubic}.

\begin{prop}\label{prop:Cubic3}
	Let $X$ be a smooth real cubic threefold. If $X$ is maximal, then its Fano surface of lines $F(X)$ and the intermediate Jacobian $J(X)$ are also maximal.
\end{prop}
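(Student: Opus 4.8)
The plan is to reduce both assertions to the equivariant formality criterion of Proposition \ref{prop:MaximalCriterion}, transporting maximality from $X$ through the incidence correspondence between lines and points. Let $\mathcal{L} = \{(\ell, x) : x \in \ell\} \subset F(X) \times X$ be the universal line; it is a real algebraic subvariety (the construction is defined over $\R$), the first projection $p \colon \mathcal{L} \to F(X)$ is a $\mathbb{C}\mathbb{P}^1$-bundle, and the second projection $q \colon \mathcal{L} \to X$ is generically finite of degree $6$ onto $X$. The cylinder homomorphism $\Phi = p_* q^* \colon H^3(X, \F2) \to H^1(F(X), \F2)$ is, by the Clemens--Griffiths theory, an isomorphism (it is an isomorphism of integral Hodge structures and both groups are torsion free), and since $\mathcal{L}$ is a real cycle it lifts to a $G$-equivariant map $\Phi^G \colon H^3_G(X, \F2) \to H^1_G(F(X), \F2)$ compatible with the restriction maps to ordinary cohomology.

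First I would treat the intermediate Jacobian, which is the cleaner half. Recall $J(X) = \Alb(F(X))$, so the Albanese isomorphism identifies $H^1(J(X), \F2)$ with $H^1(F(X), \F2)$, and composing with $\Phi$ gives a $G$-equivariant isomorphism $H^1(J(X), \F2) \cong H^3(X, \F2)$ realized by real correspondences. Since $X$ is maximal, $G$ acts trivially on $H^3(X, \F2)$ and $H^*_G(X, \F2) \to H^*(X, \F2)$ is surjective; a diagram chase through the commutative square relating $\Phi^G$ and $\Phi$ then shows $H^1_G(J(X), \F2) \to H^1(J(X), \F2)$ is surjective and that $G$ acts trivially on $H^1(J(X), \F2)$. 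Because the cohomology of an abelian variety is the exterior algebra $H^*(J(X), \F2) = \Lambda^* H^1(J(X), \F2)$ over any coefficient ring, and the restriction map is a ring homomorphism, surjectivity in degree one propagates to all degrees. Hence $J(X)$ is equivariantly formal, thus maximal. Alternatively, once $F(X)$ is known to be maximal, this is immediate from Proposition \ref{prop:AlbMax} applied to $F(X)$, whose $H_1(\cdot, \Z) \cong \Z^{10}$ is torsion free.

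For the Fano surface $F(X)$ I would argue in the same spirit. The correspondence already gives surjectivity of $H^1_G(F(X), \F2) \to H^1(F(X), \F2)$ together with triviality of the $G$-action in degree one. To reach all degrees one uses that $H^*(F(X), \F2)$ is generated as an $\F2$-algebra by $H^1(F(X), \F2)$: equivalently, since the Albanese map $F(X) \hookrightarrow J(X)$ is a closed embedding inducing an isomorphism on $H^1$, the restriction $H^*(J(X), \F2) = \Lambda^* H^1 \to H^*(F(X), \F2)$ should be surjective. Granting this, the ring-homomorphism property of $H^*_G(F(X), \F2) \to H^*(F(X), \F2)$ upgrades degree-one surjectivity to surjectivity in all degrees, and Proposition \ref{prop:MaximalCriterion} yields the maximality of $F(X)$.

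The main obstacle is precisely this last generation statement with $\F2$-coefficients. Rationally it is classical: the cup product $\Lambda^2 H^1(F(X), \Q) \to H^2(F(X), \Q)$ is an isomorphism because $[F(X)] = \theta^3/3!$ is the minimal class in $J(X)$, so restriction is injective by hard Lefschetz for the principal polarization $\theta$, and both sides have dimension $\binom{10}{2} = 45$. Over $\F2$ one needs the integral cup product $\Lambda^2 H^1(F(X), \Z) \to H^2(F(X), \Z)$ (a map of free $\Z$-modules of rank $45$, since $H^*(F(X), \Z)$ is torsion free by universal coefficients) to be an isomorphism after reduction mod $2$, i.e. to have cokernel of odd order; this requires control of the integral cohomology ring of the Fano surface that the correspondence argument does not directly supply. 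The cleanest resolution is to cite Krasnov \cite{Krasnov-Cubic} for the maximality of $F(X)$ and to obtain $J(X)$ from Proposition \ref{prop:AlbMax}, thereby sidestepping the $\F2$-generation issue entirely; the exterior-algebra structure of $H^*(J(X), \F2)$ makes the Jacobian case unconditional in any event.
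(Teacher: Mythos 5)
Your proposal is correct and follows essentially the same route as the paper: the real incidence correspondence transports equivariant surjectivity from $H^3_G(X,\F2)$ to $H^1_G(F(X),\F2)$ via the Clemens--Griffiths isomorphism, degree-one generation then propagates surjectivity to all of $H^*(F(X),\F2)$, and $J(X)\cong\Alb(F(X))$ is handled by Proposition \ref{prop:AlbMax}. The $\F2$-generation issue you flag is precisely the step the paper passes over by asserting $H^2(F(X),\F2)=\wedge^2H^1(F(X),\F2)$ outright (which holds because the integral cup product $\wedge^2H^1(F(X),\Z)\to H^2(F(X),\Z)$ is an isomorphism for the Fano surface), and the paper likewise credits the $F(X)$ statement to Krasnov, only ``recalling the proof for convenience.''
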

\begin{proof}
	We recall the proof for convenience of readers. 
	Let $\mathcal{P}:=\{(x, L)\in X\times F(X)~|~ x\in L\}$ be the incidence variety, or equivalently, the universal projective line over $F(X)$. 	
	The correspondence via $\mathcal{P}$ induces an isomorphism (\cite{ClemensGriffiths}):
	\begin{equation}
	\label{eqn:AJisom}
	[\mathcal{P}]^*\colon H^3(X, \Z)\xrightarrow{\cong} H^1(F(X), \Z).
	\end{equation}
	Consider the following commutative diagram:
	\begin{equation*}
	\xymatrix{
		H^3_G(X, \F2)\ar[r]^-{\cl_G(\mathcal{P})^*} \ar@{->>}[d]&H^1_G(F(X), \F2)\ar[d]\\
		H^3(X, \F2)\ar[r]_-{\cong}^-{[\mathcal{P}]^*} &H^1(F(X), \F2)
	}
	\end{equation*}
	where the top map is the correspondence induced by the equivariant cycle class $\cl_G(\mathcal{P})$, the bottom map is an isomorphism by \eqref{eqn:AJisom} and the universal coefficient theorem,  the vertical arrows are restriction maps and the left one is surjective by assumption. Therefore the right arrow $H^1_G(F(X), \F2)\to H^1(F(X), \F2)$ is surjective too. Since $H^2(F(X), \F2)=\wedge^2H^1(F(X)), \F2)$, we have that $H^*_G(F(X), \F2)\to H^*(F(X), \F2)$  is surjective, hence $F(X)$ is maximal. 
	Consequently, by Proposition \ref{prop:AlbMax}, $\Alb(F(X))$ is also maximal.
	Finally, by \cite{ClemensGriffiths}, we have $J(X)\cong \Alb(F(X))$ and it is straightforward to check that the isomorphism is defined over $\R$. Therefore, $J(X)$ is also maximal.
\end{proof}
The same arguments show similar results as follows:
\begin{itemize}
	\item for a general real cubic fivefold $X$ that is maximal,  its Fano variety of planes $F_2(X)$ and its intermediate Jacobian $J(X)$ are also maximal;
	\item for a general real quartic threefold $X$ that is maximal, its Fano variety of conics $F_c(X)$ and  its intermediate Jacobian $J(X)$ are also maximal;
	\item for a Gushel--Mukai  3-fold or 5-fold $X$ that is maximal, its double EPW surface $\widetilde{Y}^{\geq 2}_{A(X)}$ and  its intermediate Jacobian $J(X)$ are also maximal;
\end{itemize}
where in place of Clemens--Griffiths \cite{ClemensGriffiths}, one applies results of Collino \cite{Collino}, Letizia \cite{Letizia}, Debarre--Kuznetsov \cite{DebarreKuznetsov-IJ} respectively.

\section{Variants of maximality}
\label{sec:VariantsMaximality}
Inspired by Proposition \ref{prop:MaximalCriterion}, we propose several variants of the notion of maximality for real varieties.
\begin{defn}[K-maximality]
	\label{def:KMax}
	Let $X$ be a real variety. We say that $X$ is $K^0$-\textit{maximal}, if the natural map forgetting the real structure
	\[ \KR^0(X)\to \KU^0(X) \]
	is surjective. Similarly, one can define $K^1$-maximality by the surjectivity of $\KR^1(X)\to \KU^1(X)$.\\
	We say that $X$ is 	\textit{K-maximal} if it is $K^0$-maximal and $K^1$-maximal.
\end{defn}

\begin{defn}[$c_1$-maximality]
	\label{def:c1Max}
	Let $X$ be a real variety. We say that $X$ is \textit{$c_1$-maximal}, if the first Chern class map 
	\[ c_1\colon \KR^0(X)\to H^2(X, \F2) \]
	is surjective. 
	By first taking the determinant bundle, it is clear that the $c_1$-maximality is equivalent to the surjectivity of 
	\[c_1\colon \{\text{real vector bundles of rank 1} \}\to H^2(X, \F2).\]
	Thanks to \cite[Proposition 1]{Kahn-RealChernClass}, this condition is also equivalent to the surjectivity of the following natural map
	\[H^2_G(X, \Z(1)) \to H^2(X, \F2).\]
	
\end{defn}

It would be very interesting to compare these notions to the usual maximality. First, we have a few easy relations:

\begin{lemma}
	\label{lemma:RelationMaximalities}
	Let $X$ be a smooth projective real variety. 
	\begin{enumerate}
		\item[(i)] If $X$ is $K^0$-maximal \footnote{The argument in the proof shows that the $K^0$-maximal condition can be weakened to the surjectivity of $\KR^0(X)\otimes \mathbb{Z}/2\mathbb{Z}\to \KU^0(X)\otimes \mathbb{Z}/2\mathbb{Z}$.} and $H^3(X, \Z)$ has no 2-torsion, then $X$ is $c_1$-maximal.
		\item[(ii)] If $X$ is a $c_1$-maximal $\mathbb{R}$-surface such that $X(\mathbb{R})\neq \emptyset$ and $b_1(X)=0$, then $X$ is maximal.
	\end{enumerate}
\end{lemma}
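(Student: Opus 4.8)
The plan is to prove the two parts separately, in each case reducing the desired surjectivity to surjectivities that are either hypotheses or consequences of the commutative diagrams from \S\ref{subsec:ChernClass}. For part (i), I would start from the commutative square relating $\KR^0(X)\to\KU^0(X)$ to the first Chern class maps $c_1^G$ and $c_1$ landing in $H^2_G(X,\F2)$ and $H^2(X,\F2)$ respectively. The $c_1$-maximality we want is, by Definition \ref{def:c1Max}, equivalent to the surjectivity of $H^2_G(X,\Z(1))\to H^2(X,\F2)$. First I would use $K^0$-maximality (in the weakened form noted in the footnote, i.e. surjectivity of $\KR^0(X)/2\to\KU^0(X)/2$) together with the fact that $c_1\colon\KU^0(X)\to H^2(X,\Z)$ is surjective (the integral class of any line bundle is hit, and every element of $H^2(X,\Z)$ of the right parity is a $c_1$; more precisely, after reducing mod $2$ the Chern class map is onto $H^2(X,\F2)$ once we invoke the no-$2$-torsion hypothesis). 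The key point is that where $H^3(X,\Z)$ has no $2$-torsion, the universal coefficient / Bockstein argument shows the reduction $H^2(X,\Z)\to H^2(X,\F2)$ is surjective, so that $c_1\colon\KU^0(X)\to H^2(X,\F2)$ is surjective. Chasing the square, surjectivity of $\KR^0(X)\to\KU^0(X)$ modulo $2$ then forces $c_1^G\colon\KR^0(X)\to H^2(X,\F2)$ (equivalently the map out of $H^2_G(X,\Z(1))$) to surject onto $H^2(X,\F2)$, which is exactly $c_1$-maximality.

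For part (ii), the goal is to deduce full maximality from $c_1$-maximality plus $b_1(X)=0$, using the criterion (iii) of Proposition \ref{prop:MaximalCriterion}: I must show $H^*_G(X,\F2)\to H^*(X,\F2)$ is surjective in every degree. I would argue degree by degree on a surface, where $H^*(X,\F2)$ is concentrated in degrees $0,1,2,3,4$. Degrees $0$ and $4$ are automatic (the unit, and the fundamental class, which is hit because $X(\R)\neq\emptyset$ is implied once we know maximality is in play — more carefully, $H^4_G\to H^4$ is surjective for a real surface with nonempty real locus, and I would check the real-locus condition is guaranteed here). The hypothesis $b_1(X)=0$ kills $H^1(X,\F2)$ and $H^3(X,\F2)$ by Poincaré duality, so the only remaining nontrivial degree is $2$, and that is precisely where $c_1$-maximality delivers surjectivity: every class in $H^2(X,\F2)$ is a first Chern class of a real bundle, hence lifts to $H^2_G(X,\F2)$ via $c_1^G$.

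The main obstacle I anticipate is in part (ii), in handling the top and middle degrees cleanly. First, one must verify that $H^1(X,\F2)=H^3(X,\F2)=0$ genuinely follows from $b_1(X)=0$: over $\F2$ this requires knowing $H^1(X,\Z)$ is torsion-free (so that $b_1=0$ forces $H^1(X,\F2)=0$), or else arguing directly with $\F2$-Betti numbers and Poincaré duality, and I would need to confirm which hypotheses guarantee the vanishing of the relevant $2$-torsion. Second, surjectivity of $H^4_G(X,\F2)\to H^4(X,\F2)$ hinges on $X(\R)\neq\emptyset$; I would need to establish that $c_1$-maximality together with $b_1=0$ forces the real locus to be nonempty (for instance, a nonzero real Chern class forces $X(\R)\neq\emptyset$, or one observes that $c_1$-maximality with the top class already present forces it). Once these two checks are in place, the degree-$2$ surjectivity from $c_1$-maximality completes the verification of condition (iii), and Proposition \ref{prop:MaximalCriterion} gives maximality.
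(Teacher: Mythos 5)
Your part (i) is exactly the paper's argument: factor $c_1\colon \KR^0(X)\to H^2(X,\F2)$ as $\KR^0(X)\to \KU^0(X)\xrightarrow{c_1} H^2(X,\Z)\to H^2(X,\F2)$ and check each arrow is surjective ($K^0$-maximality; line bundles being classified by $H^2(X,\Z)$; no $2$-torsion in $H^3(X,\Z)$). That part is complete and correct.

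For part (ii) you follow the same degree-by-degree strategy as the paper (degree $2$ from $c_1$-maximality via the factorization through $c_1^G$, degrees $1$ and $3$ killed by $b_1=0$), but you leave the top degree genuinely unresolved: you propose to get surjectivity of $H^4_G(X,\F2)\to H^4(X,\F2)$ by first showing $X(\R)\neq\emptyset$, and you admit you do not know how to derive that from the hypotheses. As stated this is circular in spirit (nonemptiness of $X(\R)$ is most naturally a \emph{consequence} of maximality), and no direct argument is offered. The step can be closed without any appeal to the real locus: the restriction map $H^*_G(X,\F2)\to H^*(X,\F2)$ is a ring homomorphism, $H^2(X,\F2)\neq 0$ for a projective surface, and the cup product $H^2(X,\F2)\otimes H^2(X,\F2)\to H^4(X,\F2)\cong\F2$ is a perfect pairing by Poincar\'e duality, so some product $\alpha\smile\beta$ of degree-$2$ classes generates $H^4$; since $\alpha$ and $\beta$ lift to $H^2_G$ by $c_1$-maximality, so does their product. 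With that observation your outline becomes a complete proof. Your other worry --- that $b_1(X)=0$ only kills $H^1(X,\F2)$ and $H^3(X,\F2)$ in the absence of $2$-torsion --- is a fair point which the paper's own one-line treatment also glosses over; in every application of the lemma the paper additionally assumes $H^2(X,\Z)$ is $2$-torsion free (condition $(\star)$), which disposes of it.
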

\begin{proof}
	For (i), the map $c_1\colon \KR^0(X)\to H^2(X, \F2) $ is the composition:
	\[\KR^0(X)\longrightarrow \KU^0(X)\xrightarrow{c_1} H^2(X, \Z) \to H^2(X, \F2).\]
	The first map is surjective by assumption, the second map is surjective since $K(\Z, 2)\cong \mathbb{C}\mathbb{P}^{\infty}$ classifies vector bundles of rank 1, the third map is surjective since $H^3(X, \Z)$ has no 2-torsion. 
	
	For (ii),  since $c_1 \colon \KR^0(X)\to H^2(X, \F2)$ factors through the equivariant first Chern class map $c_1^G \colon \KR^0(X)\to H^2_G(X, \F2)$, we get that $H^2_G(X, \F2)\to H^2(X, \F2)$ is surjective. Since $b_1(X)=0$, $H^j_G(X, \F2)\to H^j(X, \F2)$  is surjective for $j=1, 3$. Since $X(\mathbb{R})\neq \emptyset$, $H^4_G(X, \F2)\to H^4(X, \F2)$ is also surjective. Therefore, $X$ is maximal.
\end{proof}

For maximal real surfaces, the $c_1$-maximality can be reinterpreted geometrically as follows; the equivalence between $(ii)$ and $(iii)$ below is due to Kharlamov, who kindly taught me the nice geometric argument and allowed me to include it here.
\begin{prop}
	\label{prop:MaximalAndC1Maximal}
	Let $X$ be a maximal smooth projective real surface with $H^1(X, \F2)=0$.
Then the following conditions are equivalent:
\begin{enumerate}
	\item[(i)] $X$ is $c_1$-maximal, i.e.~$H^2_G(X, \Z(1)) \to H^2(X, \F2)$ is surjective;
	\item[(ii)] $\rk H^2(X, \mathbb{Z})^G=0$;
	\item[(iii)]$X(\mathbb{R})$ is connected.
\end{enumerate}
\end{prop}
\begin{proof}
	The condition $H^1(X, \F2)=0$ is equivalent to saying that $b_1(X)=b_3(X)=0$ and $H^*(X, \Z)$ is 2-torsion-free.\\
	For $(i) \Leftrightarrow (ii)$,
	by the Leray--Serre spectral sequence \eqref{eqn:LerarySerre-Coeff}, together with the fact that $H^1(X, \mathbb{Z})=0$ (since $b_1=0$), we have an exact sequence
	\[0\to H^2(G, \mathbb{Z}(1))\to H^2_G(X, \mathbb{Z}(1))\to H^2(X, \mathbb{Z}(1))^G\to H^3(G, \mathbb{Z}(1))\to H^3_G(X, \mathbb{Z}(1)).\]
	Since $H^2(G, \mathbb{Z}(1))=0$ and $H^3(G, \mathbb{Z}(1))\to H^3_G(X, \mathbb{Z}(1))$ is injective by Lemma \ref{lemma:Injectivity} ($X(\R)\neq \emptyset$ since $X$ is maximal), the natural map 
	$H^2_G(X, \mathbb{Z}(1))\to H^2(X, \mathbb{Z}(1))^G$ is an isomorphism. Hence the natural map $H^2_G(X, \mathbb{Z}(1))\to H^2(X, \F2)$ factorizes as follows 
	\begin{equation}
	\label{eqn:Factorization}
	H^2_G(X, \mathbb{Z}(1))\xrightarrow{\simeq} H^2(X, \mathbb{Z}(1))^G\hookrightarrow H^2(X, \mathbb{Z})\to H^2(X, \F2).
	\end{equation}
	By Lemma \ref{lemma:Decomposition}, $$H^2(X, \mathbb{Z})=H^2(X, \mathbb{Z})^G\oplus H^2(X, \mathbb{Z}(1))^G.$$ Since $H^2(X, \F2)=H^2(X, \mathbb{Z})\otimes \F2$, the surjectivity of \eqref{eqn:Factorization} (i.e.~the $c_1$-maximality) is equivalent to the vanishing of $H^2(X, \mathbb{Z})^G\otimes \F2$, or equivalently, since $H^2(X, \mathbb{Z})^G$ is 2-torsion-free, $H^2(X, \mathbb{Z})^G$ is of rank zero.
	
	For $(ii)\Rightarrow (iii)$, supposing the contrary that $X(\mathbb{R})$ is disconnected, one can take two points $x, y$ belonging to distinct components of $X(\mathbb{R})$. Let $\gamma$ be a path from $x$ to $y$, then $\gamma-\sigma_*(\gamma)$ is a 1-cycle in $X$. Since $H_1(X, \mathbb{Z})$ is of odd torsion, there exists an odd number $N$ such that $N(\gamma-\sigma_*(\gamma))$ is a boundary of a 2-chain $T$, i.e.
	\[N(\gamma-\sigma_*(\gamma))=\partial T.\]
	Then $T+\sigma_*(T)$ is a 2-cycle, $\sigma$-invariant. Moreover, its class in $H_2(X, \mathbb{Z})$ is non-torsion, since the intersection number of $T+\sigma_*(T)$ with the connected component of $S(\mathbb{R})$ containing $x$ is odd. We thus obtain a free class in $H_2(X, \mathbb{Z})^G$, a contradiction.
	
	For $(iii)\Rightarrow (ii)$, assuming $X(\mathbb{R})$ is connected, then the maximality of $X$ implies that 
	\[\dim H^1(X(\mathbb{R}), \F2)= \rk  H^2(X, \Z)=\rk  H^2(X, \Z)^G+\rk  H^2(X, \Z(1))^G.\]
	Consider the Viro homomorphism (see \cite[Section 1.5]{DIK-RealEnriquesSurfaces}), 
	$$\operatorname{bv}\colon H_2(X, \mathbb{Z}(1))^G\to H_1(X(\mathbb{R}), \F2),$$
	which is, roughly speaking, defined as follows: for a 2-dimensional $\sigma$-invariant oriented real submanifold $\Sigma\subset X$ such that $\sigma$ reverses its orientation,  $\operatorname{bv}$ sends the class $[\Sigma]\in H_2(X, \mathbb{Z}(1))^G$ to the class of the smooth curve $\Sigma\cap X(\mathbb{R})$ in $H_2(X(\mathbb{R}), \F2)$.
	
	We claim that the Viro homomorphism $\operatorname{bv}$ is surjective. Indeed, for any 1-cycle $\gamma$ representing a class $H_1(X(\mathbb{R}), \F2)$, its image inside $H_1(X, \Z)$ is of odd torsion. Hence there is an odd number $N$, such that the class of $N\gamma$ vanishes, i.e., there exists a 2-chain $T$ in $X$, such that $\partial T=N\gamma$. Then $T-\sigma_*(T)$ is a 2-cycle, $\sigma$-anti-invariant, hence defines an element in $H^2(X, \Z(1))^G$, which is a preimage of $[\gamma]$.
	The surjectivity of the Viro homomorphism implies that $\rk  H^2(X, \Z(1))^G\geq \dim H^1(X(\mathbb{R}), \F2)$. Therefore, we must have $\rk  H^2(X, \Z)^G=0$.
\end{proof}

\begin{lemma}
	\label{lemma:KMaxRationalSurface}
	A maximal $\R$-rational real surface is K-maximal and $c_1$-maximal.
\end{lemma}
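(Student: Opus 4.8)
The plan is to reduce the whole statement to $K^0$-maximality, and then to reduce that in turn to the single structural fact that $\sigma$ acts by $-\mathrm{id}$ on $H^2(X,\Z)$. First I would dispose of the easy parts. Since $X$ is rational, $H^*(X,\Z)$ is torsion-free and concentrated in even degrees with $H^1=H^3=0$; hence the Atiyah--Hirzebruch spectral sequence degenerates, giving $\KU^1(X)=0$ together with a finite filtration of $\KU^0(X)$ whose graded pieces are $H^0(X,\Z)$, $H^2(X,\Z)$ and $H^4(X,\Z)$. In particular $\KR^1(X)\to\KU^1(X)=0$ is trivially onto, so $X$ is $K^1$-maximal; and $H^3(X,\Z)=0$ has no $2$-torsion, so by Lemma \ref{lemma:RelationMaximalities}(i) the $c_1$-maximality will follow automatically once $K^0$-maximality is in hand. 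Thus everything comes down to showing that $\KR^0(X)\to\KU^0(X)$ is surjective.

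Next I would pin down the action of $\sigma^*$ on cohomology. Because $X$ is $\R$-rational (not merely geometrically rational), it is joined to $\PP^2_\R$ by a chain of blow-ups and blow-downs at closed points of smooth real surfaces; each such operation is either a blow-up at a real point, replacing $X(\R)$ by a connected sum with $\R\PP^2$, or at a conjugate pair of points, which leaves $X(\R)$ untouched, and in both cases connectivity of the real locus is preserved. As $\PP^2(\R)=\R\PP^2$ is connected, $X(\R)$ is connected (this is Comessatti's theorem; see \cite{MangolteBook}). Combining this with maximality: for a connected closed surface one has $\chi=4-b_*(\cdot,\F2)$, whence $\chi(X(\R))=4-b_*(X(\R),\F2)=4-b_*(X,\F2)=2-b_2(X)$. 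On the other hand the topological Lefschetz fixed point formula for $\sigma$ (acting trivially on $H^0$ and on $H^4$, since $\sigma$ is orientation-preserving on a surface) gives $\chi(X(\R))=2+\tr\!\big(\sigma^*\mid H^2(X,\R)\big)$. Comparing yields $\tr(\sigma^*\mid H^2)=-b_2(X)=-\dim H^2(X,\R)$, and an involution whose trace equals minus its dimension is $-\mathrm{id}$. Hence $\sigma^*=-\mathrm{id}$ on $H^2(X,\Z)$.

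With this established, I would prove $K^0$-maximality by lifting generators of each graded piece of $\KU^0(X)$ to $\KR^0(X)$. The trivial bundle $[\mathcal{O}_X]$ generates $H^0(X,\Z)$. For $H^4(X,\Z)=\Z$, choose a real point $x\in X(\R)$ (which exists as $X$ is $\R$-rational); a real equivariant Koszul resolution of the skyscraper $\mathcal{O}_x$ realizes $[\mathcal{O}_x]$ in the image of $\KR^0(X)$, and it generates the top graded piece. For the middle piece $H^2(X,\Z)$, recall that rationality gives $\Pic(X_\C)=\mathrm{NS}(X)=H^2(X,\Z)$; since $\sigma^*=-\mathrm{id}$, every class is Galois-invariant (the Galois action differs from $\sigma^*$ by the Tate sign), and as $X(\R)\neq\emptyset$ kills the Brauer obstruction, $\Pic(X_\R)\to H^2(X,\Z)$ is surjective. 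Thus every integral degree-$2$ class is $c_1$ of a real algebraic line bundle, so real line bundles fill the graded piece $H^2(X,\Z)$. Having lifted generators of all graded pieces, I conclude that $\KR^0(X)\to\KU^0(X)$ is onto, i.e. $X$ is $K^0$-maximal; by the first paragraph $X$ is then $K$-maximal and $c_1$-maximal.

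The hard part is the second step, the identity $\sigma^*=-\mathrm{id}$ on $H^2$, and this is exactly where both hypotheses are genuinely used: $\R$-rationality forces $X(\R)$ connected, while maximality sharpens the Lefschetz count into the extremal trace identity. This step is in fact unavoidable, since the first Chern class of any real bundle $E$ is $\sigma^*$-anti-invariant (as $E\cong\sigma^*\bar E$ forces $c_1(E)=-\sigma^*c_1(E)$), so the image of $\KR^0(X)$ in the graded piece $H^2(X,\Z)$ can never exceed the $(-1)$-eigenspace of $\sigma^*$; hence $K^0$-maximality is equivalent to $\sigma^*=-\mathrm{id}$ here. By comparison, the remaining K-theoretic bookkeeping — the degeneration of the Atiyah--Hirzebruch spectral sequence and the integral lifts of the point and line-bundle classes — is routine.
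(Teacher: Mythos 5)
Your proof is correct, but it takes a genuinely different route from the paper's. The paper argues via the real minimal model program: maximality forces every blow-up in the chain $S=S_n\to\cdots\to S_0$ down to a minimal model to be centered at a real point, blowing up a real point preserves K-maximality (a step the paper asserts without detail), and one is thus reduced to the classification of maximal minimal rational real surfaces ($\PP^2$, $\PP^1\times_{\R}\PP^1$, $\mathbb{F}_n$ for $n\geq 2$), for each of which $\KU^1=0$ and $\KU^0$ is spanned by explicitly listed real algebraic line bundles. You instead work intrinsically: after the same reduction of $c_1$-maximality to $K^0$-maximality via Lemma \ref{lemma:RelationMaximalities} and the Atiyah--Hirzebruch filtration, your key structural input is that $\sigma^*=-\id$ on $H^2(X,\Z)$, which you extract from Comessatti's connectedness theorem for $\R$-rational surfaces combined with maximality and the topological Lefschetz fixed point formula; you then fill the graded pieces $H^0$, $H^2$, $H^4$ of $\KU^0(X)$ by the trivial bundle, descended line bundles (Galois-invariance being automatic once $\sigma^*=-\id$, with $X(\R)\neq\emptyset$ killing the Brauer obstruction in $\mathrm{Br}(\R)$), and the class of the structure sheaf of a real point. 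Each approach has its merits: the paper's is concrete and rests on well-documented classification results, whereas yours bypasses both the classification and the blow-up-preserves-K-maximality step; moreover it isolates a sharper criterion --- for such a surface, $K^0$-maximality is \emph{equivalent} to $\sigma^*=-\id$ on $H^2$ --- and applies verbatim to any maximal real surface with $q=p_g=0$, torsion-free $H^2(X,\Z)$, and connected non-empty real locus, not only rational ones, which resonates with the Kharlamov--R{\u{a}}sdeaconu connectedness criterion quoted in Remark \ref{rmk:LossOfMaximality}.
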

\begin{proof}
	The minimal model theory for surfaces has its real version: any smooth projective real surface can be obtained from a minimal\footnote{Here, a real surface is called minimal if it does not contain a (-1)-curve defined over $\R$ or a disjoint conjugate pair of (-1)-curves.} one  by a sequence of successive blow-ups, where at each step  the blow-up center is either a point in the real locus or a pair of complex conjugate points outside of the real locus; see \cite{Kollar-TopologyOfRealVar}.
	
	Note that given a maximal surface $S$, (any of) its minimal model $S_0$  is again maximal by Proposition \ref{prop:SurjectionOddDegree}. Moreover, at each step of blow-up in $S=S_n\to \cdots \to S_1\to S_0$, the blow-up center must be a point in the real locus (otherwise $b_2(S_{i+1}, \F2)-b_2(S_i, \F2)=2$, but the real locus does not change). Therefore, any maximal real surface can obtained from a maximal minimal real surface, via a sequence of blow-ups at points in the real loci. Since blowing up at a point in the real locus preserves the K-maximality, we only need to check that for any minimal rational real surface, maximality implies K-maximality (which implies $c_1$-maximality by Lemma \ref{lemma:RelationMaximalities}).
	
	By classification (see \cite[Theorem 4.3.23 and Theorem 4.4.11]{MangolteBook}), minimal rational real surfaces that are maximal are $\PP^2$, $\PP^1\times_{\R} \PP^1$, $\mathbb{F}_n$ (real Hirzebruch surface) for $n\geq 2$. For each surface $S$ in the list, $\KU^1(S)=0$ and $\KU^0(S)$ is generated by real algebraic line bundles: \\
	- for $\PP^2$, the generators are $\mathcal{O}, \mathcal{O}(1), \mathcal{O}(2)$;\\
	- for $\PP^1\times \PP^1$, the generators are $\mathcal{O},p_1^* \mathcal{O}(1), p_2^* \mathcal{O}(1), p_1^* \mathcal{O}(1)\otimes p_2^* \mathcal{O}(1)$;\\
	- for $\mathbb{F}_n=\PP_{\PP^1}(\mathcal{O}\oplus \mathcal{O}(n))\xrightarrow{\pi} \PP^1$, the generators are $\mathcal{O}, \pi^*\mathcal{O}_{\PP^1}(1), \mathcal{O}_\pi(1), \pi^*\mathcal{O}_{\PP^1}(1)\otimes \mathcal{O}_{\pi}(1)$;\\
	Hence $\KR^0(S)\to \KU^0(S)$ is surjective.
\end{proof}

\section{Moduli spaces of vector bundles on curves}
\label{sec:VBAC}
Let $C$ be a compact Riemann surface of genus $g\geq 2$. Let $n>0$ and $d$ be two coprime integers. Let $M:=M_C(n, d)$ be the moduli space of stable vector bundles of rank $n$ and degree $d$. Since $n$ and $d$ are assumed to be coprime, $M$ is a projective complex manifold of dimension $n^2(g-1)+1$ and there exists a universal bundle $\mathcal{E}$ on $C\times M$. We denote by $p\colon C\times M\to C$ and $\pi\colon C\times M\to M$ the two natural projections.

We will use the following fundamental theorem of Atiyah--Bott \cite[Theorem 9.11]{AtiyahBott} on generators of the cohomology ring of $M$.

\begin{thm}[Atiyah--Bott]
	\label{thm:AtiyahBott}
	Assume that $d>(2g-2)n$. 
	The cohomology ring $H^*(M, \Z)$ is torsion-free and generated by the following three types of elements.
	\begin{enumerate}
		\item[(i)] For $1\leq r\leq n$, the Chern class $a_r:=c_r(\mathcal{E}|_M)\in H^{2r}(M, \Z)$, where $\mathcal{E}|_M$ is the restriction of $\mathcal{E}$ to a fiber of $p$.
		\item[(ii)] For $1\leq r\leq n$, the $(1, 2r-1)$-K\"unneth components $b_{r,j}\in H^{2r-1}(M, \Z)$ of $c_r(\mathcal{E})$, where we write $c_r(\mathcal{E})=\sum_{j=1}^{2g} \alpha_j\otimes b_{r, j}+1_C\otimes a_r+ [\pt]\otimes \pi_*(c_r(\mathcal{E}))$, where $\{\alpha_j\}_{1\leq j\leq 2g}$ form a basis of $H^1(C, \Z)$, $1_C$ is the fundamental class of $C$ and $[\pt]$ is the class of a point on $C$.
		\item[(iii)] For $1\leq r\leq d+n(1-g)$, the Chern class $c_r(\pi_*\mathcal{E})\in H^{2r}(M, \Z)$.
	\end{enumerate}
\end{thm}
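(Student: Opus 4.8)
The plan is to follow the gauge-theoretic, equivariant Morse theory strategy of Atiyah--Bott. First I would replace the moduli space by an infinite-dimensional model: fix a $C^\infty$ Hermitian bundle $E_0$ of rank $n$ and degree $d$ on $C$, let $\mathcal{C}$ be the contractible, affine space of holomorphic structures ($\bar\partial$-operators) on $E_0$, and let $\mathcal{G}$ be the complexified gauge group. Because $\gcd(n,d)=1$, stability and semistability coincide, the stable locus $\mathcal{C}^{s}=\mathcal{C}^{ss}$ is open, and every stable point has stabilizer exactly the center $\mathbb{C}^*$; hence the quotient realizes $M$ as the base of a $\mathbb{C}^*$-gerbe over $\mathcal{C}^{s}/\mathcal{G}$, giving an isomorphism $H^*_{\mathcal{G}}(\mathcal{C}^{s})\cong H^*(M)\otimes H^*(B\mathbb{C}^*)$. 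Thus it suffices to understand $H^*_{\mathcal{G}}(\mathcal{C}^{s})$ and to exhibit generators coming from the universal bundle.

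The core step is the Harder--Narasimhan stratification of $\mathcal{C}$: the open stratum is $\mathcal{C}^{ss}$ and the unstable strata $\mathcal{C}_\mu$ are indexed by instability types $\mu$, with $\mathcal{C}_\mu$ built from moduli of lower rank. I would invoke the Atiyah--Bott result that this stratification is \emph{equivariantly perfect}: the equivariant Euler class of the normal bundle to each stratum is a non-zero-divisor in integral equivariant cohomology, so the Thom--Gysin sequences split into short exact sequences
\[
0\to H^{*-2d_\mu}_{\mathcal{G}}(\mathcal{C}_\mu)\to H^*_{\mathcal{G}}(\mathcal{C}_{\ge\mu})\to H^*_{\mathcal{G}}(\mathcal{C}_{>\mu})\to 0.
\]
Two consequences follow at once: the restriction $H^*_{\mathcal{G}}(\mathcal{C})\to H^*_{\mathcal{G}}(\mathcal{C}^{ss})$ is surjective, and everything in sight is torsion free, which yields the torsion-freeness of $H^*(M)$.

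To extract generation, I would use that $\mathcal{C}$ is contractible, so $H^*_{\mathcal{G}}(\mathcal{C})=H^*(B\mathcal{G})$, whose ring structure Atiyah--Bott compute explicitly: it is generated by the Künneth components of the Chern classes of the universal bundle on $C\times B\mathcal{G}$. These universal classes restrict, via $\mathcal{C}\to\mathcal{C}^{ss}$ and then the gerbe $\mathcal{C}^{ss}\to M$, to the Künneth components of $c_r(\mathcal{E})$ on $C\times M$, namely the classes $a_r$, $b_{r,j}$, and $\pi_*(c_r(\mathcal{E}))$. By the surjectivity above, these generate $H^*(M,\Z)$. Finally, to match the stated type (iii), I would apply Grothendieck--Riemann--Roch: since $d>(2g-2)n$ forces $H^1(C,E)=0$ for every stable $E$ (its dual twist $K_C\otimes E^\vee$ is stable of negative degree, hence without sections), the pushforward $\pi_*\mathcal{E}$ is an honest vector bundle of rank $d+n(1-g)$ with $R^1\pi_*\mathcal{E}=0$, and $\ch(\pi_*\mathcal{E})=\pi_*(\ch(\mathcal{E})\cdot\td(C))$ expresses each $c_r(\pi_*\mathcal{E})$ as a polynomial in the $a_r$, $b_{r,j}$ and $\pi_*(c_r(\mathcal{E}))$; hence types (i)--(iii) together generate. (For general coprime $(n,d)$ one first reduces to large $d$ by tensoring with a line bundle, an isomorphism of moduli spaces.)

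The hard part — the technical heart of Atiyah--Bott — is proving the equivariant perfection of the Yang--Mills/HN stratification, i.e.\ that the Morse inequalities are equalities. This rests on showing that the equivariant Euler classes of the negative normal bundles are non-zero-divisors, which in turn requires an inductive analysis over the lower-rank moduli appearing in each stratum. An alternative route, avoiding gauge theory altogether, is the Harder--Narasimhan--Atiyah--Bott recursion for point counts of $\Bun_{n,d}$ over finite fields combined with the Weil conjectures; this recovers the Poincaré polynomial and, together with a purity argument, the generation statement. I nonetheless expect the equivariant Morse-theoretic argument to give the cleanest simultaneous proof of torsion-freeness and of generation by the three explicit families of tautological classes.
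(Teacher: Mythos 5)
The paper does not prove this statement: it is quoted from Atiyah--Bott \cite[Theorem 9.11]{AtiyahBott} (in a mildly reformulated version --- see Remark \ref{rmk:Pushforward} --- in which the K\"unneth component $f_r:=\pi_*(c_r(\mathcal{E}))$ appearing in the original is replaced in item (iii) by $c_r(\pi_*\mathcal{E})$). So there is no in-paper argument to compare you with; what you have written is a sketch of the original gauge-theoretic proof, and its architecture is faithful to Atiyah--Bott: the identification $H^*_{\mathcal{G}}(\mathcal{C}^{ss})\cong H^*(M)\otimes H^*(B\mathbb{C}^*)$ using coprimality, the equivariant perfection of the Harder--Narasimhan stratification giving surjectivity of $H^*(B\mathcal{G})=H^*_{\mathcal{G}}(\mathcal{C})\to H^*_{\mathcal{G}}(\mathcal{C}^{ss})$, and the computation of $H^*(B\mathcal{G})$ as a free algebra on the K\"unneth components of the universal Chern classes are exactly the three pillars. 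Two small caveats: the short exact sequences alone do not yield torsion-freeness of $H^*_{\mathcal{G}}(\mathcal{C}^{ss})$ --- one also needs the equivariant cohomology of each unstable stratum to be torsion-free, which is part of the same induction on rank that gives the non-zero-divisor property; and passing from generators of $H^*(M)\otimes H^*(B\mathbb{C}^*)$ to generators of $H^*(M)$ requires normalizing the universal bundle (another place where $\gcd(n,d)=1$ is used), which you gloss over.

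The genuine gap is the final step. You write that Grothendieck--Riemann--Roch expresses each $c_r(\pi_*\mathcal{E})$ as a polynomial in the $a_i$, $b_{i,j}$ and $f_i=\pi_*(c_i(\mathcal{E}))$, ``hence types (i)--(iii) together generate.'' That implication runs in the wrong direction: it shows that the classes of type (iii) lie in the subring generated by the Atiyah--Bott generators, i.e.\ that the proposed generating set is no larger than necessary, not that it is large enough. What the stated theorem requires is the converse, namely that each $f_r$ lies in the subring generated (over $\Z$) by the $a_i$, $b_{i,j}$ and $c_i(\pi_*\mathcal{E})$. This is true, but it needs an argument: one must invert the triangular change of variables coming from GRR, and since GRR is a priori an identity with rational coefficients while the theorem is integral (and generation over $\Z$ is strictly stronger than generation over $\Q$ even for a torsion-free ring), one must check that the inversion has integer coefficients. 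In low degree this is visible directly, e.g.\ $c_1(\pi_*\mathcal{E})=-f_2+(d-g+1)a_1\pm\sum_{j}b_{1,j}b_{1,j+g}$, so $f_2$ is recovered integrally, and the higher $f_r$ follow by an induction in which the fractional terms of $\operatorname{ch}$ recombine integrally; but as written your proposal asserts the wrong implication and omits this verification entirely.
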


\begin{rmk}
	\label{rmk:Pushforward}
	In the above theorem, note that under the numerical assumption $d>(2g-2)n$, we have $R^1\pi_*\mathcal{E}=0$ and $\pi_*\mathcal{E}$ is a vector bundle of rank $d+n(1-g)$ by Riemann--Roch. This assumption can be dropped in the statement provided that we replace in (iii) the direct image $\pi_*\mathcal{E}$ by the K-theoretic push-forward $\pi_!\mathcal{E}:=\sum_i (-1)^i[R^i\pi_*\mathcal{E}]$ and allow $r$ to be any integer.
\end{rmk}

Now, let $\sigma$ be a real structure on the curve $C$. It naturally induces a real structure $\sigma_M$ on $M$, given by the anti-holomorphic involution sending $E$ to $\overline{\sigma^*(E)}$ (indeed, the rank and the degree are both preserved by $\sigma_M$). We give a short proof of the following theorem of Brugall\'e and Schaffhauser \cite[Theorem 1.2]{BrugalleSchaffhauser}.

\begin{thm}
	\label{thm:VBAC}
	Let $(C, \sigma)$ be a smooth projective real curve. If $(C, \sigma)$ is maximal, then $(M, \sigma_M)$ is also maximal. Conversely, if $(M, \sigma_M)$ is maximal and $C(\R)\neq \emptyset$, then $(C, \sigma)$ is maximal.
\end{thm}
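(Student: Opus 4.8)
The plan is to prove both implications by verifying the criterion of Proposition \ref{prop:MaximalCriterion}: namely that the restriction maps $\rho_M\colon H^*_G(M,\F2)\to H^*(M,\F2)$ and $\rho_C\colon H^*_G(C,\F2)\to H^*(C,\F2)$ are surjective. The two inputs are the Atiyah--Bott generation theorem (Theorem \ref{thm:AtiyahBott}, in the $\pi_!$-form of Remark \ref{rmk:Pushforward}) together with the fact that all of its generators are (K\"unneth components of) Chern classes of the \emph{real} universal bundle $\mathcal{E}$ on $C\times M$, and hence lift canonically to equivariant cohomology. The engine tying this together is an equivariant K\"unneth formula: since $(C\times M)_G\cong C_G\times_{BG}M_G$, the Eilenberg--Moore spectral sequence computes $H^*_G(C\times M,\F2)$ from $H^*_G(C,\F2)$ and $H^*_G(M,\F2)$ over $H^*(BG,\F2)$, and it degenerates as soon as one of the two factors is maximal, since maximality is exactly equivariant formality, i.e.\ freeness of $H^*_G(-,\F2)$ over $H^*(BG,\F2)$ (Proposition \ref{prop:MaximalCriterion}). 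Thus if $C$ is maximal one obtains $H^*_G(C\times M,\F2)\cong H^*(C,\F2)\otimes_{\F2}H^*_G(M,\F2)$ under which $\rho_{C\times M}$ becomes $\id\otimes\rho_M$, while if $M$ is maximal one obtains $H^*_G(C\times M,\F2)\cong H^*_G(C,\F2)\otimes_{\F2}H^*(M,\F2)$ under which $\rho_{C\times M}$ becomes $\rho_C\otimes\id$; the images are then $H^*(C,\F2)\otimes\im(\rho_M)$, respectively $\im(\rho_C)\otimes H^*(M,\F2)$.

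For the direct implication I assume $C$ maximal. Because $\mathcal{E}$ is a holomorphic real bundle, each equivariant Chern class $c_r^G(\mathcal{E})$ of \S\ref{subsec:ChernClass} restricts to $c_r(\mathcal{E})$, so $c_r(\mathcal{E})$ lies in $H^*(C,\F2)\otimes\im(\rho_M)$. Expanding it in the K\"unneth decomposition of Theorem \ref{thm:AtiyahBott} and reading off the components along $1_C$, the $\alpha_j$ and $[\pt]$ forces $a_r$, $b_{r,j}$ and $\pi_*c_r(\mathcal{E})$ all to lie in $\im(\rho_M)$. Likewise $\pi_!\mathcal{E}$ is a real $K$-theory class, so $c_r(\pi_!\mathcal{E})\in\im(\rho_M)$ directly. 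By Theorem \ref{thm:AtiyahBott} and Remark \ref{rmk:Pushforward} these classes generate $H^*(M,\F2)$, and $\im(\rho_M)$ is a subring; hence $\rho_M$ is surjective and $M$ is maximal.

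For the converse I assume $M$ maximal and $C(\R)\neq\emptyset$. Now $c_1(\mathcal{E})$ lies in $\im(\rho_C)\otimes H^*(M,\F2)$. Since the classes $b_{1,j}$ form a basis of $H^1(M,\F2)$ (Theorem \ref{thm:AtiyahBott}), comparing the coefficient of $b_{1,j}$ on both sides of this membership forces $\alpha_j\in\im(\rho_C)$ for every $j$; that is, $\rho_C$ is surjective in degree $1$. In degrees $0$ and $2$ the surjectivity of $\rho_C$ is immediate, the latter using $C(\R)\neq\emptyset$ exactly as at the end of the proof of Corollary \ref{cor:Jacobian}. As $H^*(C,\F2)$ is concentrated in degrees $0,1,2$, this yields surjectivity of $\rho_C$ in all degrees, so $C$ is maximal.

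The main point to secure is the equivariant input rather than the cohomological bookkeeping: that the universal bundle $\mathcal{E}$ carries a genuine real structure lying over $\sigma\times\sigma_M$, so that the $c_r^G(\mathcal{E})$ are defined and restrict to the $c_r(\mathcal{E})$, and that $\pi_!\mathcal{E}$ is real. Because $\mathcal{E}$ is unique only up to twist by a line bundle pulled back from $M$, I expect the delicate step to be checking that such a normalization can be arranged $G$-equivariantly, or else phrasing everything through the twist-independent real bundle $\End(\mathcal{E})$ and its equivariant Chern and pushforward classes; this is exactly where the real geometry enters. A secondary point is the degeneration of the Eilenberg--Moore spectral sequence, for which the needed freeness over $H^*(BG,\F2)$ of a maximal factor is guaranteed by Proposition \ref{prop:MaximalCriterion}; if one prefers to avoid it, the same conclusions follow by lifting each generator by hand, transporting equivariant lifts of the $\alpha_j\in H^1_G(C,\F2)$ through the real correspondence $c_r^G(\mathcal{E})$ by an equivariant Gysin map, and dually for the converse.
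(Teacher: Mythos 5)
Your proof is correct, and while the forward implication is essentially the paper's argument in different packaging, the converse is a genuinely different route. In the forward direction the paper does not invoke an equivariant K\"unneth/Eilenberg--Moore decomposition of $H^*_G(C\times M,\F2)$; it works generator by generator, writing the classes of types (i) and (ii) as $\pi_*(p^*(\alpha)\smile c_r(\mathcal{E}))$ and chasing a commutative square in which the equivariant correspondence $[c^G_r(\mathcal{E})]_*$ sits over $[c_r(\mathcal{E})]_*$, with surjectivity of $H^*_G(C,\F2)\to H^*(C,\F2)$ on the left; this is exactly your ``lift each generator by hand'' fallback and is morally identical to reading off K\"unneth components, but it sidesteps any convergence discussion for the Eilenberg--Moore spectral sequence over the non-simply-connected base $BG$ (your version is salvageable --- for $\F2$-coefficients the $\Z/2$-action on cohomology is unipotent, and in any case freeness of the maximal factor over $H^*(BG,\F2)$ gives the K\"unneth isomorphism by a Leray--Hirsch argument without EMSS --- but the correspondence diagrams are cleaner). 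For the converse the paper argues quite differently: maximality of $M$ plus torsion-freeness of $H^*(M,\Z)$ gives maximality of $\Alb(M)\cong J^d(C)$ via Proposition \ref{prop:AlbMax} (which rests on Comessatti's description of real loci of real abelian varieties), then $J^d(C)\cong J(C)$ using $C(\R)\neq\emptyset$, and finally Corollary \ref{cor:Jacobian}. Your argument --- extracting the $\alpha_j$ as coefficients of the basis $\{b_{1,j}\}$ of $H^1(M,\F2)$ in the $(1,1)$-K\"unneth component of $c_1(\mathcal{E})$, then handling degrees $0$ and $2$ by hand --- is more self-contained and avoids the theory of real abelian varieties entirely, at the price of again needing the equivariant K\"unneth identification with $M$ as the free factor (note that the $(1,1)$-component of $c_1(\mathcal{E})$ is unchanged under twisting $\mathcal{E}$ by $\pi^*L$, so your normalization worry is harmless there, and the $b_{1,j}$ are indeed a basis since $H^*(M,\Z)$ is torsion-free with $b_1(M)=2g$). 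Both proofs rely equally on the unproved-but-standard fact that a universal bundle exists over $\R$ when $\gcd(n,d)=1$; the paper asserts this as well, so flagging it does not create a gap relative to the paper.
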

\begin{proof}
	For simplicity we drop the real structures from the notation. First, we assume that $C$ is maximal and prove the maximality of $M$.
	Tensoring with an algebraic $\mathbb{R}$-line bundle $\mathcal{L}$ on $C$ of positive degree  induces an isomorphism of moduli spaces that is compatible with the real structures (in particular the maximality is preserved):
	\begin{equation*}
	-\otimes \mathcal{L}\colon M_C(n, d)\xrightarrow{\simeq} M_C(n, d+n\deg(\mathcal{L})).
	\end{equation*}
	We can therefore assume that $d>(2g-2)n$.
	
	By passing to $\F2$-coefficient in Theorem \ref{thm:AtiyahBott}, we get a set of generators for the ring $H^*_G(M, \F2)$. We will show that they are all in the image of the restriction map $H^*_G(M, \F2)\to H^*(M, \F2)$. The key point is that the universal bundle $\mathcal{E}$ is a real algebraic/holomorphic vector bundle on the real variety $C\times M$, hence defines a class $[\mathcal{E}]\in \KR^0(C\times M)$ and possesses equivariant Chern classes $c^G_r(\mathcal{E})\in H^{2r}_G(C\times M, \F2)$ for any $r\in \mathbb{N}$.
	
	- For the generators in $(i)$ and $(ii)$, they are all of the form $[c_r(\mathcal{E})]_*(\alpha):=\pi_*( p^*(\alpha)\smile c_r(\mathcal{E}))$, where $\alpha\in H^*(C, \F2)$, upon which the Chern class $c_r(\mathcal{E})\in H^*(C\times M, \F2)$ acts as cohomological correspondence. Since we have the following commutative diagram:
	\begin{equation}
	\xymatrix{
		H_G^*(C, \F2) \ar[rr]^{[c^G_r(\mathcal{E})]_*} \ar@{->>}[d]& &	H_G^*(M, \F2) \ar[d]\\
		H^*(C, \F2)  \ar[rr]^{[c_r(\mathcal{E})]_*} & &	H^*(M, \F2),
	}
	\end{equation}
	where the left vertical arrow is surjective by the maximality of $C$ (and Proposition \ref{prop:MaximalCriterion}), we obtain that $[c_r(\mathcal{E})]_*(\alpha)$ lies in the image of $H^*_G(M, \F2)\to H^*(M, \F2)$.
	
	- For the generators in $(iii)$, consider the following commutative diagram (for any $r\in \mathbb{N}$):
	\begin{equation}
	\xymatrix{
		\KR^0(C\times M) \ar[r]^-{\pi_!} \ar[d]& \KR^0(M)\ar[r]^{c^G_r} \ar[d]& H^{2r}_G(M, \F2)\ar[d]\\	
		\KU^0(C\times M) \ar[r]^-{\pi_!}& \KU^0(M)\ar[r]^{c_r} & H^{2r}(M, \F2),
	}
	\end{equation}
	where the left horizontal arrows are the K-theoretic push-forwards along the projection $\pi\colon C\times M\to M$ (which is a real map). Recall that $[\mathcal{E}]\in \KR^0(C\times M)$ is mapped to $[\mathcal{E}]\in \KU^0(C\times M)$. Therefore, $c_r(\pi_*(\mathcal{E}))=c_r(\pi_![\mathcal{E}])$ (see Remark \ref{rmk:Pushforward}), which naturally lies in the image of $H^*_G(M, \F2)\to H^*(M, \F2)$.
	
	Consequently, $H^*_G(M, \F2)\to H^*(M, \F2)$ is surjective. We conclude the maximality of $M$ by Proposition \ref{prop:MaximalCriterion}.
	
	Conversely, assuming that $M$ is maximal,  then by Proposition \ref{prop:AlbMax} and the fact that $H^*(M, \Z)$ is torsion-free, we get the maximality of the Albanese variety of $M$, which is identified with $J^d(C)$. Since $C(\R)\neq \emptyset$ by hypothesis, tensoring with a degree-1 real line bundle shows that $J(C)$ is isomorphic to $J^d(C)$, hence is also maximal. By Corollary \ref{cor:Jacobian}, $C$ is maximal. 
\end{proof}

\begin{rmk}
	\begin{enumerate}
		\item Brugall\'e--Schaffhauser \cite{BrugalleSchaffhauser} also proved that for a $\R$-line bundle $\mathcal{L}$ on a maximal  real curve $C$, the moduli space $M_C(n, \mathcal{L})$ of stable bundles with fixed determinant is also maximal.  
		%We also provide a short alternative proof of this fact, but using motives. See Theorem \ref{thm:VBACFixedDet}.
		\item Brugall\'e--Schaffhauser \cite{BrugalleSchaffhauser} actually shows that $M_C(n, d)$ and $M_C(n, \mathcal{L})$ satisfy a stronger property called \textit{Hodge-expressivity} provided $C$ is maximal. Our proof does not give any information on this regard; the reason is partially explained in Remark \ref{rmk:SummandHE}.		
	\end{enumerate}
\end{rmk}

Moduli spaces of parabolic vector bundles were introduced and studied by Mehta and Seshadri \cite{MS}. Theorem \ref{thm:VBAC} admits the following consequence in this context. 
\begin{cor}[Moduli of parabolic bundles]
	\label{cor:Parabolic}
	Let $C$ be a maximal real curve. Fix a (finite) set $D\subset  C(\R)$, whose elements are called parabolic points. Let $n>0$, $d$ be coprime integers. Let $\underline{1}$ denote the full flag type (at each point of $D$), and let $\alpha$ be a generic weight. Then the moduli space $M^{\alpha}_{C, D}(n, d, \underline{1})$ of $\alpha$-stable parabolic vector bundles of rank $n$, degree $d$ and full flag type is maximal. 
\end{cor}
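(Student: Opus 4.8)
The plan is to deduce the statement from Theorem \ref{thm:VBAC} by realizing $M^\alpha_{C,D}(n,d,\underline{1})$ as an iterated flag bundle over the moduli space $M:=M_C(n,d)$ of ordinary stable bundles, and then invoking the flag-bundle version of Proposition \ref{prop:ProjBun} recorded in Remark \ref{rmk:FlagBundles}.

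First I would pin down the real structure. Since $D\subset C(\R)$, the involution $\sigma$ fixes each parabolic point, so it carries a parabolic bundle $(E,(F^p_\bullet)_{p\in D})$ to $(\overline{\sigma^*E},(\overline{\sigma^*F^p_\bullet})_p)$; conjugation preserves the dimensions of the flag steps, hence the generic weight datum $\alpha$, so this defines an anti-holomorphic involution $\sigma_{M^\alpha}$ on $M^\alpha_{C,D}(n,d,\underline{1})$ with no symmetry hypothesis on $\alpha$. This is the real structure implicit in the statement. Next I would treat the chamber of small generic weights. Since $\gcd(n,d)=1$, there is a real holomorphic universal bundle $\mathcal{E}$ on $C\times M$ (as used in the proof of Theorem \ref{thm:VBAC}); for each $p\in D$ the restriction $\mathcal{E}_p:=\mathcal{E}|_{\{p\}\times M}$ is a real holomorphic vector bundle of rank $n$ on $M$, because $\{p\}\times M$ is a real subvariety. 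For sufficiently small generic $\alpha$, a parabolic bundle is $\alpha$-stable if and only if its underlying bundle is stable (coprimality making ordinary semistability coincide with stability), and any choice of full flags at the points of $D$ is then admissible. Hence the forgetful map gives an isomorphism of real varieties
\[ M^\alpha_{C,D}(n,d,\underline{1})\;\cong\;\mathrm{Fl}(\mathcal{E}_{p_1})\times_M\cdots\times_M\mathrm{Fl}(\mathcal{E}_{p_k}), \]
the fiber product over $M$ of the full-flag bundles of the $\mathcal{E}_p$, where $D=\{p_1,\dots,p_k\}$. Since $M$ is maximal by Theorem \ref{thm:VBAC} and each $\mathcal{E}_p$ is a real bundle, applying Remark \ref{rmk:FlagBundles} once for each $p\in D$ shows $M^\alpha_{C,D}(n,d,\underline{1})$ is maximal in this chamber.

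Finally, to reach an arbitrary generic weight I would connect it to the small-weight chamber by crossing finitely many walls. The variation-of-stability picture for parabolic bundles (Thaddeus, Boden--Hu) realizes each wall-crossing as a standard flip along a smooth real center, so maximality is preserved across walls by Remark \ref{rmk:FlipFlop}, provided the flipping centers are themselves maximal. I expect the main obstacle to lie precisely here: verifying that the wall-crossing loci are standard flips with maximal centers, and propagating maximality through them. An alternative route avoiding wall-crossing altogether would be to establish a parabolic analogue of the Atiyah--Bott generation (Theorem \ref{thm:AtiyahBott}), in which the additional generators are Chern classes of the universal flag subquotients at the real points $p\in D$; these are Chern classes of real bundles, hence automatically lie in the image of $H^*_G(-,\F2)\to H^*(-,\F2)$, so the equivariant argument of Theorem \ref{thm:VBAC} would then apply verbatim. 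Proving such a generation statement uniformly over all generic weights is the crux of this second approach.
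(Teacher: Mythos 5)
Your route is the same as the paper's: reduce to the minimal chamber, where the forgetful map exhibits $M^{\alpha}_{C,D}(n,d,\underline{1})$ as an iterated (full-)flag bundle over $M_C(n,d)$ built from the real bundles $\mathcal{E}|_{\{p\}\times M}$ for $p\in D\subset C(\R)$, conclude there by Theorem \ref{thm:VBAC} and Remark \ref{rmk:FlagBundles}, and then transport maximality to an arbitrary generic chamber by wall-crossing via Remark \ref{rmk:FlipFlop}. The one place you stop short --- ``verifying that the wall-crossing loci are standard flips with maximal centers'' --- is precisely the step the paper completes, and it is worth seeing how: by the variation-of-stability results you cite (Boden--Hu, Boden--Yokogawa, Thaddeus), each wall-crossing is a standard flip whose center is a \emph{projective bundle over a product of moduli spaces of stable parabolic bundles with strictly smaller invariants}. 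One therefore runs an induction on the invariants $(n,d,\underline{1})$: the smaller parabolic moduli spaces are maximal by the inductive hypothesis, their products are maximal by Lemma \ref{lemma:Product}, the projective bundles over them are maximal by Proposition \ref{prop:ProjBun}, and then Remark \ref{rmk:FlipFlop} carries maximality across the wall. Your minimal-chamber argument serves as the base case of this induction, so the gap you flag is real but closes with no new ideas beyond the cited structure of the flipping centers.

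Two smaller remarks. Your identification of the minimal-chamber moduli space as the fiber product over $M$ of the full-flag bundles of the $\mathcal{E}_p$ is correct and matches the paper's use of \cite[Theorem 4.2]{BY_rationality}; note that applying Remark \ref{rmk:FlagBundles} ``once for each $p$'' implicitly uses that each successive flag bundle is taken relative to the pullback of a real bundle, which is fine since all the maps involved are real morphisms. Your alternative route via a parabolic Atiyah--Bott generation theorem is plausible but, as you say, would require proving a uniform generation statement over all chambers; the wall-crossing induction avoids this entirely and is the path the paper takes.
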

\begin{proof}
	By \cite{BH}, \cite{BY_rationality}, \cite{Thaddeus_VGIT}, there is a wall-and-chamber structure on the space of weights, and by varying the weight, the moduli space stays the same within the chamber and undergoes a standard flip when crossing a wall with flipping centers being projective bundles over products of moduli spaces of stable parabolic bundles with smaller invariants (see \cite[Theorem 4.1]{BY_rationality} or \cite[Theorem 5.11]{FHPL-rank2}). Therefore, by wall-crossing, induction, and Remark \ref{rmk:FlipFlop}, we only need to prove the maximality of $M^{\alpha}_{C, D}(n, d, \underline{1})$ for $\alpha$ generic in the minimal chamber, where the stability of a parabolic bundle reduces to the stability of the underlying vector bundle. To this end, one uses natural morphisms which forget the flags. By \cite[Theorem 4.2]{BY_rationality}, such morphisms identify the moduli space of parabolic bundles as iterated flag bundles (hence iterated Grassmannian bundles) over $M_C(n,d)$. By Proposition \ref{prop:ProjBun} (or rather Remark \ref{rmk:FlagBundles}) and Theorem \ref{thm:VBAC}, we can conclude the maximality of $M^{\alpha}_{C, D}(n, d, \underline{1})$.
\end{proof}

\section{Moduli spaces of Higgs bundles on curves}
\label{sec:Higgs}
Let $C$ be smooth projective curve of genus $g\geq 2$. A \textit{Higgs bundle} on $C$ is a pair $(E, \phi)$ where $E$ is a holomorphic vector bundle on $C$ and $\phi\colon E\to E\otimes \omega_C$ is $\mathcal{O}_C$-linear. Such a pair $(E, \phi)$ is called (semi-)stable if  any proper Higgs sub-bundle $(E', \phi|_{E'})$ satisfies the slope inequality $\mu(E')(\leq)<\mu(E)$.  Fix two integers $n>0$ and $d$ that are coprime to each other. Then stability and semi-stability coincide for Higgs bundles of rank $n$ and degree $d$, and the moduli space $H_C(n, d)$, constructed in \cite{Nitsure}, of (semi-)stable Higgs bundles of rank $n$ and degree $d$ is a smooth quasi-projective variety of dimension $2n^2(g-1)+2$, admitting a hyper-K\"ahler metric and containing the cotangent bundle of $M_C(n, d)$ as an open subscheme. There is the so-called Hitchin map sending $(E, \phi)$ to the coefficients of the characteristic polynomial of $\phi$:
$$H_C(n,d)\to \bigoplus_{i=1}^nH^0(C, \omega_C^{\otimes i}).$$ 
The Hitchin map is a proper map.  Hence $H_C(n,d)$ is a partial compactification\footnote{in the sense that it compactifies the fibers of the Hitchin map by adding semistable Higgs bundles with unstable underlying vector bundle.} of the cotangent bundle of $M_C(n, d)$. We refer to \cite{Hitchin-Selfduality}, \cite{Simpson-IHES-Higgs} for details.

Markman constructed in \cite[Theorem 3]{Markman-IntegralGeneratorPoisson} a system of integral generators of the cohomology ring $H^*(H_C(n,d), \Z)$. By \cite[Corollary 9]{GarciaPradaHeinloth-Duke}, $H^*(H_C(n,d), \Z)$ is torsion-free, hence Markman's generators also give generators for $H^*(H_C(n,d), \F2)$. We will need the following somewhat different characterizations of these integral generators, in a way closer to that in Atiyah--Bott's Theorem \ref{thm:AtiyahBott}.

\begin{thm}[Integral generators for cohomology of Higgs moduli]
	\label{thm:GeneratorHiggs}
	Let $H:=H_C(n, d)$ be the moduli space of stable Higgs bundles of coprime rank $n$ and degree $d$. Let $\mathcal{E}$ be a universal bundle over $C \times H$. Let $p_1, p_2$ be the two projections from $C\times H$ to its two factors. Then the cohomology ring $H^*(H, \Z)$ is torsion-free and generated by the following three types of elements.
	\begin{enumerate}
		\item[(i)] For $1\leq r\leq n$, the Chern class $a_r:=c_r(\mathcal{E}|_H)\in H^{2r}(H, \Z)$, where $\mathcal{E}|_H$ is the restriction of $\mathcal{E}$ to a fiber of $p_1$.
		\item[(ii)] For $1\leq r\leq n$, the $(1, 2r-1)$-K\"unneth components $b_{r,j}\in H^{2r-1}(H, \Z)$ of $c_r(\mathcal{E})$, where we write $c_r(\mathcal{E})=\sum_{j=1}^{2g} \alpha_j\otimes b_{r, j}+1_C\otimes a_r+ [\pt]\otimes p_{2,*}(c_r(\mathcal{E}))$ via the K\"unneth formula, where $\{\alpha_j\}_{1\leq j\leq 2g}$ form a basis of $H^1(C, \Z)$, $1_C$ is the fundamental class of $C$ and $[\pt]$ is the class of a point on $C$.
		\item[(iii)] For $r\geq 0$, the Chern class $c_r(p_{2, !}[\mathcal{E}])\in H^{2r}(H, \Z)$, where $[\mathcal{E}]$ is viewed as in $\KU^0(C\times H)$ and $p_{2,!}$ is the K-theoretic pushforward (Gysin map).
	\end{enumerate}
\end{thm}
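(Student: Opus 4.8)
The plan is to deduce the statement from Markman's construction of integral generators \cite{Markman-IntegralGeneratorPoisson} by repackaging his classes into the Atiyah--Bott form (i)--(iii), in exact parallel with the passage from Theorem \ref{thm:AtiyahBott} to Remark \ref{rmk:Pushforward}. The torsion-freeness of $H^*(H,\Z)$ is already provided by \cite[Corollary 9]{GarciaPradaHeinloth-Duke}, so the only remaining task concerns the generators; and because the ring is torsion free, reducing integral generators modulo $2$ automatically produces generators of $H^*(H,\F2)$, which is what is used later.

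First I would recall that Markman's generators for $H^*(H,\Z)$ are the slant (K\"unneth) components of the Chern classes of a universal object: via the spectral correspondence $H$ is a moduli space of pure one-dimensional sheaves on a Poisson compactification of the total space of $\omega_C$, and pushing the universal sheaf forward to $C$ identifies these slant products with the K\"unneth components of $c_r(\mathcal{E})$ on $C\times H$. Writing $c_r(\mathcal{E}) = 1_C\otimes a_r + \sum_j \alpha_j\otimes b_{r,j} + [\pt]\otimes p_{2,*}(c_r(\mathcal{E}))$, the $(0,\ast)$ and $(1,\ast)$ pieces are precisely the classes of types (i) and (ii); since $\rk \mathcal{E}=n$ forces $c_r(\mathcal{E})=0$ for $r>n$, the ranges $1\le r\le n$ in (i) and (ii) already exhaust these two families. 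It then remains to show that the $(2,\ast)$-components $p_{2,*}(c_r(\mathcal{E}))$ lie in the subring generated by (i), (ii) and (iii).

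For this I would apply Grothendieck--Riemann--Roch to $p_2\colon C\times H\to H$, whose relative tangent bundle satisfies $\td(T_{p_2}) = 1+(1-g)\,p_1^*[\pt]$ since $\dim C=1$, giving
\[ \ch(p_{2,!}[\mathcal{E}]) = p_{2,*}\!\big(\ch(\mathcal{E})\,(1+(1-g)\,p_1^*[\pt])\big). \]
Extracting the degree-$2k$ part yields $\ch_k(p_{2,!}[\mathcal{E}]) = p_{2,*}(\ch_{k+1}(\mathcal{E})) + (1-g)\,\ch_k(\mathcal{E}|_H)$, where $\ch_k(\mathcal{E}|_H)$ is a polynomial in $a_1,\dots,a_n$ (type (i)) and $p_{2,*}(\ch_{k+1}(\mathcal{E}))$ is the $(2,\ast)$ K\"unneth component of $\ch(\mathcal{E})$. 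Thus, modulo the subring generated by type (i), the families $\{\ch_k(p_{2,!}[\mathcal{E}])\}$ and $\{p_{2,*}(\ch_{k+1}(\mathcal{E}))\}$ determine one another. Converting Chern characters back to Chern classes via Newton's identities then shows that $\{c_r(p_{2,!}[\mathcal{E}])\}$ and the integral $(2,\ast)$-components $\{p_{2,*}(c_r(\mathcal{E}))\}$ generate the same subring modulo types (i) and (ii), which recovers all of Markman's generators from (i)--(iii).

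The main obstacle will be integrality: Grothendieck--Riemann--Roch and Newton's identities both introduce denominators, so a priori the manipulations above only control rational spans. The point requiring care is that, when one solves for the integral classes $p_{2,*}(c_r(\mathcal{E}))$ in terms of the $c_s(p_{2,!}[\mathcal{E}])$ and the classes of types (i) and (ii), the apparent half-integral contributions cancel --- concretely because $p_{2,*}$ of a product such as $c_1(\mathcal{E})^2$ acquires an \emph{even} coefficient from the antisymmetry of the cup-product pairing on $H^1(C,\Z)$, as one checks directly in low degrees. Granting this, the resulting change of basis is triangular in cohomological degree with unit leading coefficient, hence invertible over $\Z$ rather than merely over $\Q$, which upgrades the rational identities to integral ones. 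A final routine point is that the universal bundle $\mathcal{E}$ is unique only up to a twist by $p_2^*L$ for a line bundle $L$ on $H$: such a twist changes the individual $a_r$ and $b_{r,j}$ but, exactly as in the Atiyah--Bott setting, preserves the subring they generate together with type (iii), so the statement is independent of the choice of $\mathcal{E}$.
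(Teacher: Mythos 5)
Your overall strategy --- quote torsion-freeness from Garc\'ia-Prada--Heinloth, invoke Markman's integral generators via the spectral correspondence, and repackage them into Atiyah--Bott form --- is the same as the paper's, but two steps that you treat as bookkeeping are in fact the technical content of the proof, and as written they do not go through. First, Markman's integral generators are \emph{not} the cohomological K\"unneth components of $c_r(\mathcal{E})$; they are the Chern classes (and, for the odd part, the higher Chern classes $c_{r-\frac{1}{2}}$) of the \emph{K-theoretic} K\"unneth factors $e_i\in \KU^*(H)$ appearing in $[\mathcal{E}]=\sum_i p_1^*(x_i)\otimes p_2^*(e_i)$ for a basis $\{x_i\}$ of $\KU^*(C)$. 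Identifying $c_{r-\frac{1}{2}}(e_i)$ for $i\le 2g$ with the $(1,2r-1)$-K\"unneth components of $c_r(\mathcal{E})$ requires the identity $\left[c_r(p_1^*x\otimes p_2^*y)\right]_{(1,2r-1)}=p_1^*c_{\frac{1}{2}}(x)\smile p_2^*c_{r-\frac{1}{2}}(y)$ for $x\in \KU^1(C)$, $y\in \KU^1(H)$, which the paper extracts from Markman's explicit (and rather involved) Chern class formulas for tensor products of odd K-theory classes. You assert this identification in one sentence with no argument; it is not automatic.

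Second, your route from $p_{2,*}(c_r(\mathcal{E}))$ to $c_r(p_{2,!}[\mathcal{E}])$ via Grothendieck--Riemann--Roch and Newton's identities only controls rational spans, and the proposed rescue does not work: the denominators entering $\ch_k$ are $k!$, not $2$, and the claim that the change of basis is ``triangular with unit leading coefficient'' is false, since $\ch_k=\frac{(-1)^{k-1}}{(k-1)!}c_k+\frac{1}{k!}(\text{decomposables})$. The evenness coming from the antisymmetry of the pairing on $H^1(C,\Z)$ can at best absorb a single factor of $2$. The paper avoids the issue entirely by never leaving K-theory: applying $p_{2,!}$ to the integral K\"unneth decomposition gives $p_{2,!}[\mathcal{E}]=\sum_i\chi(x_i)\,e_i$ with integer coefficients, and the basis of $\KU^0(C)$ is chosen as $x_{2g+1}$ (lifting $1_C+(g-1)[\pt]$, so $\chi(x_{2g+1})=0$) and $x_{2g+2}$ (lifting $[\pt]$, so $\chi(x_{2g+2})=1$), forcing $e_{2g+2}=p_{2,!}[\mathcal{E}]$ and $e_{2g+1}=[\mathcal{E}]|_H$ on the nose. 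This produces the generators of types (i) and (iii) exactly, with no denominators to cancel. To repair your argument you should either adopt this K-theoretic basis trick or supply a genuine integrality proof for the GRR/Newton conversion, which I do not believe is available in the form you sketch.
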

\begin{proof}
	As is mentioned above, $H^*(H, \Z)$ is torsion-free thanks to \cite[Corollary 9]{GarciaPradaHeinloth-Duke}.\\
	The Atiyah--Hirzebruch spectral sequence $E_2^{p,q}=H^p(C, \KU^q(\pt))\Rightarrow K^{p+q}(C)$ degenerates at the $E_2$-page for dimension reason. We obtain that
	\begin{itemize}
		\item the Chern character map $(\rk, c_1)\colon \KU^0(C)\xrightarrow{\cong} H^0(C, \Z)\oplus H^2(C, \Z)$ is an isomorphism;
		\item the higher first Chern class\footnote{We use here the topological convention to denote higher Chern classes by half integers. In a more algebro-geometric fashion, it should be denoted by $c_{1,1}$. In general, $c_{j-\frac{1}{2}}$ corresponds to the algebraic notation $c_{j, 1}:\KU^1\to H^{2j-1}$.} map $c_{\frac{1}{2}}\colon \KU^1(C) \xrightarrow{\cong} H^1(C, \Z)$ is an isomorphism. 
	\end{itemize}
	As in the statement of the theorem, let $\{\alpha_i\}_{i=1}^{2g}$ be a basis of $H^1(C, \Z)$. We consider the following basis of $H^0(C, \Z)\oplus H^2(C, \Z)$:
	\[ \alpha_{2g+1}:=1_C+(g-1) [\pt] ; \quad\quad \alpha_{2g+2}:= [\pt].\]
	Via the above Chern class isomorphisms, we lift $\{\alpha_i\}_{i=1}^{2g+2}$ to a basis $\{x_i\}_{i=1}^{2g+2}$ of $\KU^*(C)$.
	In particular, $c_{\frac{1}{2}}(x_i)=\alpha_i$ for $1\leq i\leq 2g$, and $x_{2g+1}$ (resp.~$x_{2g+2}$) is represented by a degree $g-1$ line bundle (resp.~by $[\mathcal{O}_C]-[\mathcal{O}_C(-x)]$ for a point $x\in C$).
	
	We can 
	write the decomposition of the class $[\mathcal{E}]\in \KU^0(C\times H)$ via the K\"unneth formula $\KU^0(C\times H)=\KU^1(C)\otimes \KU^1(H)\oplus \KU^0(C)\otimes \KU^0(H)$ as follows:
	\begin{equation}
	\label{eqn:KunnethK}
	[\mathcal{E}]=\sum_{i=1}^{2g+2}p_1^*(x_i)\otimes p_2^*(e_i),\
	\end{equation}
	where $e_i\in \KU^*(H)$.
	By Markman \cite[Theorem 3]{Markman-IntegralGeneratorPoisson}, the (usual) Chern classes $c_r(e_i)$ for $i=2g+1$ and $2g+2$, and the higher Chern classes $c_{r-\frac{1}{2}}(e_i)$ for $i\leq 2g$, where $r\in \mathbb{N}^*$, generate the cohomology ring $H^*(H, \Z)$. We will show that they all can be written in the form of generators in $(i), (ii), (iii)$ in the statement:
	
	- For the Chern classes of $e_{2g+2}$, it is easy to check that when applying  $p_{2,!}$ to \eqref{eqn:KunnethK}, on the right-hand side, the only contribution is the term $p_{2,!}(p_1^*(x_{2g+2})\otimes p_2^*(e_{2g+2}))=\chi(x_{2g+2})e_{2g+2}=e_{2g+2}$. Indeed,  $\chi(x_{2g+1})=0$ and $\chi(x_{2g+2})=1$ by Riemann--Roch  and the choices for $x_{2g+1}$ and $x_{2g+2}$.  Therefore $e_{2g+2}=p_{2, !}([\mathcal{E}])$, and its Chern classes are included in $(iii)$.
	
	-For the Chern classes of $e_{2g+1}$, it is easy to check that when restricting both sides of \eqref{eqn:KunnethK} to a fiber of $p_1$, on the right-hand side, the only contribution comes from the term $p_1^*(x_{2g+1})\otimes p_2^*(e_{2g+1})$, which restricts to $e_{2g+1}$. Indeed, when restricting to a point of $C$,  $x_{2g+1}$ becomes $1_{\pt}$ and $x_{2g+2}$ becomes zero. Therefore $e_{2g+1}=[\mathcal{E}]|_{H}$, and its Chern classes are included in $(i)$.
	
	-For the higher Chern classes of $\{e_i\}_{i=1}^{2g}$, take the $(1, 2r-1)$-K\"unneth component of the $r$-th Chern classes of both sides of \eqref{eqn:KunnethK}:
	\begin{align*}
	&[c_r(\mathcal{E})]_{(1, 2r-1)}\\
	=&\left[c_r\left(\sum_{i=1}^{2g+2}p_1^*(x_i)\otimes p_2^*(e_i)\right)\right]_{(1, 2r-1)}\\
	=&\left[c_r\left(\sum_{i=1}^{2g}p_1^*(x_i)\otimes p_2^*(e_i)\right)\right]_{(1, 2r-1)}\\
	=& \sum_{i=1}^{2g}\left[c_r\left(p_1^*(x_i)\otimes p_2^*(e_i)\right)\right]_{(1, 2r-1)}\\
	=& \sum_{i=1}^{2g} p_1^*c_{\frac{1}{2}}(x_i)\smile  p_2^*c_{r-\frac{1}{2}}(e_i)\\
	=& \sum_{i=1}^{2g} p_1^*\alpha_i\smile  p_2^*c_{r-\frac{1}{2}}(e_i),
	\end{align*}
	where the second equality follows from the fact that $x_{2g+1}, x_{2g+2}, e_{2g+1}, e_{2g+2}$ are in $\KU^0$ hence do not have higher Chern classes in odd cohomology;
	the third equality is because $c_r\left(\sum_{i=1}^{2g}p_1^*(x_i)\otimes p_2^*(e_i)\right)$ is a polynomial in higher Chern classes of $x_i$'s and $e_i$'s, but each $x_i$ can only contribute to $c_{\frac{1}{2}}(x_i)\in H^1(C, \Z)$ for the $(1, 2r-1)$-K\"unneth component, so only one of $x_i$'s can contribute each time; the fourth equality follows from the explicit computation in \cite{Markman-IntegralGeneratorPoisson}, as is explained in the next lemma; the last equality is by the definition of $x_i$'s. Therefore, $c_{r-\frac{1}{2}}(e_i)$ is the K\"unneth components of $c_r(\mathcal{E})$, which is included in $(ii)$.
	
	To summarize, we have showed the Markman's generators are indeed the ones in $(i), (ii), (iii)$ in the statement.
\end{proof}

\begin{lemma}
	Given a variety $H$ and a curve $C$,  for any $x\in \KU^1(C)$ and $y\in \KU^1(H)$, we have 
	\[\left[c_r(p_1^*x\otimes p_2^*y)\right]_{(1, 2r-1)}=p_1^*c_{\frac{1}{2}}(x)\smile p_2^*c_{r-\frac{1}{2}}(y),\]
	for any positive integer $r$.
\end{lemma}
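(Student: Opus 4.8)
The plan is to route everything through the Chern character, deriving the identity rationally from a leading-term computation, and then to show that on the Künneth component of type $(1,2r-1)$ this rational computation is in fact exact and integral, the usual decomposable corrections being forced to vanish. Set $z:=p_1^*x\otimes p_2^*y$, a class in $\KU^2(C\times H)\cong\KU^0(C\times H)$; we must compute the $(1,2r-1)$-component of its ordinary Chern class $c_r(z)\in H^{2r}(C\times H,\Z)$. Recall that, by definition, the higher Chern class $c_{r-\frac12}$ and the component $\mathrm{ch}_{r-\frac12}$ of the Chern character of a $\KU^1$-class are the images, under the suspension isomorphism $\tilde H^{2r}(\Sigma(-)_+)\cong H^{2r-1}(-)$, of the ordinary $c_r$ and $\mathrm{ch}_r$ of the associated reduced class.

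First I would compute the leading term rationally. By naturality and multiplicativity of the Chern character, $\mathrm{ch}(z)=p_1^*\mathrm{ch}(x)\smile p_2^*\mathrm{ch}(y)$. Since $C$ is a curve, $\mathrm{ch}(x)\in\bigoplus_s H^{2s-1}(C,\Q)$ collapses to its single surviving term $\mathrm{ch}_{\frac12}(x)=c_{\frac12}(x)\in H^1(C,\Q)$. Hence $\mathrm{ch}(z)=\sum_{s\geq1}p_1^*c_{\frac12}(x)\smile p_2^*\mathrm{ch}_{s-\frac12}(y)$ is entirely of Künneth type $(1,\mathrm{odd})$, and extracting degree $2r$ gives $\mathrm{ch}_r(z)=p_1^*c_{\frac12}(x)\smile p_2^*\mathrm{ch}_{r-\frac12}(y)$.

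Next I would upgrade this to an integral statement about $c_r(z)$. Newton's identities give $\mathrm{ch}_r=\frac{(-1)^{r-1}}{(r-1)!}c_r+Q_r$, where $Q_r$ is a sum of decomposable monomials in $c_1,\dots,c_{r-1}$. The crucial point is that $Q_r(c(z))$ contributes nothing in Künneth type $(1,2r-1)$. This rests on the vanishing $[c_i(z)]_{(0,2i)}=0$ for all $i\geq1$: restriction to $\{\mathrm{pt}\}\times H$ annihilates every Künneth summand of positive $C$-degree and is an isomorphism onto the $(0,2i)$-part, while $z|_{\{\mathrm{pt}\}\times H}=0$ because $p_1^*x$ there becomes the pullback of $x|_{\mathrm{pt}}\in\KU^1(\mathrm{pt})=0$. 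Consequently any decomposable monomial of total $C$-degree $1$ must carry a factor of $C$-degree $0$, which vanishes; so $[\mathrm{ch}_r(z)]_{(1,2r-1)}=\frac{(-1)^{r-1}}{(r-1)!}[c_r(z)]_{(1,2r-1)}$, i.e. $[c_r(z)]_{(1,2r-1)}=(-1)^{r-1}(r-1)!\,p_1^*c_{\frac12}(x)\smile p_2^*\mathrm{ch}_{r-\frac12}(y)$.

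Finally I would carry out the matching conversion on the $y$-factor, which supplies exactly the cancelling factorial. Applying the same Newton identity to the reduced class $\tilde y$ on $\Sigma H_+$ and using that all cup products of positive-degree reduced classes vanish on a suspension (a co-$H$-space), the term $Q_r$ here vanishes identically, so $\mathrm{ch}_r(\tilde y)=\frac{(-1)^{r-1}}{(r-1)!}c_r(\tilde y)$, that is $\mathrm{ch}_{r-\frac12}(y)=\frac{(-1)^{r-1}}{(r-1)!}c_{r-\frac12}(y)$. Substituting, the factors $(-1)^{r-1}(r-1)!$ and $\frac{(-1)^{r-1}}{(r-1)!}$ cancel and leave $[c_r(z)]_{(1,2r-1)}=p_1^*c_{\frac12}(x)\smile p_2^*c_{r-\frac12}(y)$, as claimed. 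The main obstacle is precisely the bookkeeping of these decomposable correction terms: the Chern character alone only controls the leading coefficient, and the substance of the lemma is that two independent vanishing mechanisms—restriction to a point of $C$ on one side, triviality of cup products on a suspension on the other—force the identity to hold integrally and on the nose, the factorial normalizations cancelling.
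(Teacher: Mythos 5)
Your argument is correct in rational cohomology and takes a genuinely different route from the paper. The paper's proof simply plugs into Markman's explicit closed formula for $c_r$ of an external product of two odd K-theory classes (\cite[Lemma 22(3)]{Markman-IntegralGeneratorPoisson} combined with the last formula of his Section 2) and observes that on a curve the higher classes $c_{k+\frac12}(x)$ vanish for $k\geq 1$, so only the trivial partition survives in the $(1,2r-1)$-component. You avoid that formula entirely: multiplicativity of the Chern character gives the leading term, the vanishing $[c_i(z)]_{(0,2i)}=0$ (via restriction to $\{\mathrm{pt}\}\times H$ and $\KU^1(\mathrm{pt})=0$) kills the decomposable Newton corrections in the $(1,2r-1)$-component, and the co-$H$-space argument on $\Sigma H_+$ supplies the matching factorial on the $y$-side. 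Both vanishing mechanisms are sound, and the two normalizations $(-1)^{r-1}(r-1)!$ do cancel as you say. What your approach buys is self-containedness --- no appeal to Markman's combinatorial formula is needed --- at the cost of routing through $\Q$-coefficients.

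That cost is the one point you should address: your chain of equalities lives in $H^*(C\times H;\Q)$, so what you have actually proved is that the two integral classes $[c_r(z)]_{(1,2r-1)}$ and $p_1^*c_{\frac12}(x)\smile p_2^*c_{r-\frac12}(y)$ agree after tensoring with $\Q$, i.e.\ up to a torsion class in $H^1(C;\Z)\otimes H^{2r-1}(H;\Z)$. Your closing claim that the identity is forced to hold ``integrally and on the nose'' is not justified by the argument as written. The paper's proof, resting on Markman's integral formula, does give the statement over $\Z$ for arbitrary $H$. In the only place the lemma is used (Theorem \ref{thm:GeneratorHiggs}), $H^*(H_C(n,d),\Z)$ is torsion free by \cite[Corollary 9]{GarciaPradaHeinloth-Duke}, so your proof suffices for the application; but either add that hypothesis to your version of the lemma or supply an integral argument for the general case.
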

\begin{proof}
	Combining Markman \cite[Lemma 22 (3)]{Markman-IntegralGeneratorPoisson} and \cite[the last formula in Section 2]{Markman-IntegralGeneratorPoisson}, we get 
	\begin{tiny}
			\[c_r(p_1^*x\otimes p_2^*y)=
		\sum_{\lambda={(1^{m_1}\cdots r^{m_r})}\dashv r}(-1)^{r-\ell(\lambda)}\prod_{i\geq 1}(-1)^{m_i(i-1)}\sum_{0\leq k_1<\cdots<k_{m_i}\leq i-1}\prod_{j=1}^{m_i}{{i-1}\choose{k_j}}p_1^*c_{k_j+\frac{1}{2}}(x)\smile p_2^*c_{i-k_j-\frac{1}{2}}(y).\]
	\end{tiny}
	On the right-hand side, as $c_{k_j+\frac{1}{2}}(x)$ is non-zero only if $k_j=0$. Therefore, \textit{all} $k_j=0$, hence $m_i=0$ or 1 (i.e.~$\lambda$ has no repeated parts), the above formula simplifies to 
	\[c_r(p_1^*x\otimes p_2^*y)=\sum_{\lambda=(\lambda_1>\cdots >\lambda_{\ell})\dashv r}(-1)^{r-\ell}\prod_{j\geq 1}^{\ell}(-1)^{(\lambda_j-1)}p_1^*c_{\frac{1}{2}}(x)\smile p_2^*c_{i-\frac{1}{2}}(y).\]
	Now if we take the $(1, 2r-1)$-K\"unneth component, for each term, there can be at most one $c_{\frac{1}{2}}(x)$ in the product, so only the trivial partition $\lambda=(r)$ contributes to this K\"unneth component, and we get the desired formula. 
\end{proof}

If $C$ is defined over $\R$, then so is $H_C(n, d)$ by construction. In other words, a real structure on $C$ induces a natural real structure on $H_C(n, d)$. We have the following analogue of Theorem \ref{thm:VBAC}.
\begin{thm}\label{thm:Higgs}
	Let $C$ be a smooth projective real curve. Let $n>0$ and $d$ be two coprime integers. If $C$ is maximal, then the moduli space $H_C(n, d)$ is also maximal. Conversely, if $H_C(n, d)$ is maximal and $C(\R)\neq \emptyset$, then $C$ is maximal.
\end{thm}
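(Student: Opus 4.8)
The plan is to mirror the proof of Theorem \ref{thm:VBAC} verbatim, replacing the Atiyah--Bott generators with the integral generators of $H^*(H, \Z)$ furnished by Theorem \ref{thm:GeneratorHiggs}, and replacing the Jacobian/Albanese input in the converse direction by the corresponding statement for the Higgs moduli space. The crucial structural feature, already exploited for $M_C(n,d)$, is that the universal bundle $\mathcal{E}$ over $C\times H$ is a \emph{real} (holomorphic, hence algebraic) vector bundle, because the real structure on $C$ descends $H_C(n,d)$ and its universal family to $\R$. Consequently $\mathcal{E}$ defines a class in $\KR^0(C\times H)$ and carries equivariant Chern classes $c_r^G(\mathcal{E})\in H^{2r}_G(C\times H, \F2)$ lifting its ordinary Chern classes.

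First I would assume $C$ maximal and establish surjectivity of the restriction map $H^*_G(H, \F2)\to H^*(H, \F2)$, using Proposition \ref{prop:MaximalCriterion}. Since $H^*(H, \Z)$ is torsion free, passing to $\F2$-coefficients in Theorem \ref{thm:GeneratorHiggs} gives generators of $H^*(H, \F2)$ of the three types (i), (ii), (iii). The generators in (i) and (ii) are all of the correspondence form $[c_r(\mathcal{E})]_*(\alpha)=p_{2,*}(p_1^*\alpha\smile c_r(\mathcal{E}))$ for $\alpha\in H^*(C, \F2)$; since $c_r^G(\mathcal{E})$ provides an equivariant lift and $C$ is maximal, the commutative square
\begin{equation*}
\xymatrix{
H_G^*(C, \F2) \ar[rr]^{[c^G_r(\mathcal{E})]_*} \ar@{->>}[d]& & H_G^*(H, \F2) \ar[d]\\
H^*(C, \F2) \ar[rr]^{[c_r(\mathcal{E})]_*} & & H^*(H, \F2)
}
\end{equation*}
(with the left vertical arrow surjective by the maximality of $C$) shows these classes lift to $H^*_G(H, \F2)$. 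For the generators in (iii), namely $c_r(p_{2,!}[\mathcal{E}])$, I would use that the K-theoretic pushforward $p_{2,!}$ is a real map, so it carries $[\mathcal{E}]\in \KR^0(C\times H)$ to a class in $\KR^0(H)$ whose equivariant Chern classes $c_r^G$ lift the $c_r(p_{2,!}[\mathcal{E}])\in H^{2r}(H, \F2)$; diagrammatically this is the Higgs analogue of the commutative diagram involving $\KR^0\to\KU^0$ and $\pi_!$ from the proof of Theorem \ref{thm:VBAC}. Since all three families of generators lie in the image of the restriction map, that map is surjective and $H$ is maximal.

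For the converse, assuming $H$ maximal and $C(\R)\neq\emptyset$, I would extract a maximal abelian variety from $H$ exactly as in the vector-bundle case. The natural candidate is again $J^d(C)$: the determinant-of-cohomology or an Albanese-type construction realizes the Jacobian as a canonical factor, and since $H^*(H, \Z)$ is torsion free one can invoke Proposition \ref{prop:AlbMax} to conclude that the relevant Jacobian is maximal; tensoring with a degree-$1$ real line bundle (available because $C(\R)\neq\emptyset$) identifies $J^d(C)$ with $J(C)$, and Corollary \ref{cor:Jacobian} then forces $C$ to be maximal. The one point requiring care here is that $H$, being only quasi-projective, does not literally have an Albanese variety in the projective sense; I expect the main obstacle to be identifying the correct maximal abelian variety intrinsically attached to $H$ and verifying that the relevant cohomological isomorphism with $H^1$ of the Jacobian is defined over $\R$. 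One clean route around this is to observe that the open immersion $T^*M_C(n,d)\hookrightarrow H$ and the affine-bundle structure of the cotangent bundle give $H^1(H, \F2)\cong H^1(M_C(n,d), \F2)$, reducing the converse to the already-established converse for $M_C(n,d)$ in Theorem \ref{thm:VBAC}; I would argue that restriction to the open subset is compatible with equivariant cohomology and hence transfers maximality of $H^1$, so that the converse follows from the curve case.
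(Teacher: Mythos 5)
Your forward direction is exactly the paper's proof: the paper proves Theorem \ref{thm:Higgs} by declaring that the proof of Theorem \ref{thm:VBAC} applies verbatim with $M$ replaced by $H$ and with Theorem \ref{thm:GeneratorHiggs} in place of Theorem \ref{thm:AtiyahBott}, which is precisely what you spell out (real universal bundle, equivariant Chern classes, correspondence diagrams for the type (i)--(ii) generators, and the $\KR\to\KU$ pushforward diagram for type (iii)). For the converse the paper again says ``verbatim'', i.e.\ it applies the Albanese argument of Proposition \ref{prop:AlbMax} directly to $H$, quietly passing over the fact that $H$ is only quasi-projective while that proposition is stated for smooth projective varieties; your detour --- restricting to the real open subvariety $T^*M_C(n,d)\hookrightarrow H$, matching the universal bundles so that $H^1(H,\F2)\to H^1(M_C(n,d),\F2)$ identifies the $2g$ generators $b_{1,j}$ on both sides, and then falling back on the projective case --- is a legitimate and arguably more careful route. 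One small point to tighten: this reduction only transfers surjectivity of $H^1_G\to H^1$ to $M_C(n,d)$, not the full maximality hypothesis of the converse of Theorem \ref{thm:VBAC} as stated, so you should note that degree-one surjectivity already suffices: it forces $\sigma$ to act trivially on $H^1(M_C(n,d),\F2)$, hence $\lambda_1=0$, which is all that the proof of Proposition \ref{prop:AlbMax} uses to conclude that $\Alb(M_C(n,d))\cong J^d(C)$ is maximal, after which $C(\R)\neq\emptyset$ and Corollary \ref{cor:Jacobian} finish the argument exactly as in the paper.
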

\begin{proof}
	The proof is completely analogous to the proof of Theorem \ref{thm:VBAC}. We wrote that proof in the way that it applies here verbatim, with $M$ replace by $H$ and we use Theorem \ref{thm:GeneratorHiggs} in the place of Theorem \ref{thm:AtiyahBott}.
\end{proof}

%\begin{cor}[Moduli of parabolic Higgs bundles]
%	\label{cor:ParabolicHiggs}
%	Let $C$ be a smooth projective real maximal curve. Fix $D\subset  C(\R)$,  the (finite) set of parabolic points. Let $n>0$, $d$ be coprime integers. Let $\underline{m}$ be a flag type and $\alpha$ a generic weight. Then the moduli space $H^{\alpha}_{C, D}(n, d, \underline{m})$ of $\alpha$-stable parabolic Higgs bundles of rank $n$, degree $d$ and flag type $\underline{m}$ is maximal. 
%\end{cor}

\section{Hilbert squares and Hilbert cubes}
\label{sec:HilbertSquare}
Given  a smooth complex variety $X$ and a positive integer $n$, the Hilbert scheme of length $n$ subschemes on $X$ is called the $n$-th \textit{punctual Hilbert scheme} of $X$, denoted by $X^{[n]}$ or $\Hilb_n(X)$, which is a proper (resp.~projective) scheme if $X$ is proper (resp.~projective). When $\dim(X)=1$, $X^{[n]}$ is nothing but the symmetric power $ X^{(n)}$, which is smooth. When $\dim(X)=2$, $X^{[n]}$ is again smooth by Fogarty \cite{Fogarty} and it provides a crepant resolution of the symmetric power $X^{(n)}$ via the Hilbert--Chow morphism. When $\dim(X)\geq 3$, by Cheah \cite{Cheah}, $X^{[2]}$ and $X^{[3]}$ are still smooth, but $X^{[n]}$ are singular for $n\geq 4$.

More generally, for any sequence of positive integers $n_1<n_2<\cdots < n_r$, we have the so-called \textit{nested} punctual Hilbert scheme $X^{[n_1, n_2, \cdots, n_r]}$ classifying flags of subschemes of lengths $n_1, \dots, n_r$, defined as the incidence subscheme in the product $X^{[n_1]}\times \cdots\times X^{[n_r]}$. Cheah \cite{Cheah} proved that when $\dim(X)\geq 3$, the only smooth nested Hilbert schemes are $X$, $X^{[1, 2]}$, $X^{[2]}$, $X^{[3]}$ and $X^{[2,3]}$.

\begin{rmk}[Explicit construction]
	\label{rmk:ExplicitConstruction}
	We call $X^{[2]}$ the \textit{Hilbert square} of $X$. One can construct $X^{[2]}$ simply as the blow-up of the (in general singular) symmetric square $X^{(2)}$ along the diagonal.  An alternative way of construction is as follows: let $\Bl_{\Delta}(X\times X)$ be the blow-up of $X\times X$ along its diagonal, then the natural involution on $X\times X$ lifts to the blow-up and the quotient of $\Bl_{\Delta}(X\times X)$ by the involution is isomorphic to $X^{[2]}$.\\
	Similarly, we call $X^{[3]}$ the \textit{Hilbert cube} of $X$. There is also a concrete construction for the Hilbert cube starting from $X\times X\times X$. Roughly, we first perform successive blow-ups in $X^3$ to resolve the natural rational map $X^3\dashrightarrow X^{[3]}$, then quotient by the symmetric group $\mathfrak{S}_3$, and finally contract a divisor; see \cite{ShenVial-Hilb3} for the details. 
\end{rmk}
\begin{rmk}[Douady space]
The construction of Hilbert schemes can be generalized to the analytic category: for a complex manifold $X$ and a positive integer $n$, we have the so-called $n$-th punctual Douady space $X^{[n]}$, as well as the nested version. The aforementioned results of Cheah still hold in this setting: the only smooth nested punctual Douady spaces associated to a complex manifold $X$ of dimension $\geq 3$ are $X$, $X^{[1, 2]}$, $X^{[2]}$, $X^{[3]}$ and $X^{[2,3]}$. In what follows, we will use the algebraic terminology Hilbert schemes (squares, cubes, etc.) to refer to their Douady analogues, and all the results in this section hold in the analytic category as well as in the algebraic category, with the same proof.
\end{rmk}

If $(X, \sigma)$ is a real variety, then $\sigma$ naturally induces real structures on the nested punctual Hilbert schemes. Indeed, for a finite length closed subscheme $Z$ of $X$, 
consider the base change by the conjugate automorphism of the base field $\C$:
\begin{equation*}
\xymatrix{
	&\bar Z \ar@{^{(}->}[dl]_{\sigma\circ i'}\cart\ar[r]^{\conjug}\ar@{^{(}->}[d]^{i'}& Z\ar@{^{(}->}[d]^{i}\\
	X\ar[dr]_{f}&\bar X\ar[d]^{f'}\cart\ar[l]^{\sigma} \ar[d] \ar[r]^{\conjug}& X\ar[d]^{f}\\
	&\Spec(\C) \ar[r]^{\conjug} &\Spec(\C)
}
\end{equation*}
then define the image of $Z$ to be the closed subscheme $\sigma\circ i': \bar{Z}\hookrightarrow S$. 
In the sequel, we always equip the (nested) Hilbert schemes with such natural real structures. 

\begin{thm}
	\label{thm:HilbertSquareCube}
	Let $X$ be a smooth real variety. 
	\begin{enumerate}
		\item[(i)] If $X$ is maximal, then $X^{[1, 2]}$ is maximal.
		\item[(ii)] If $X^{[2]}$ is maximal and $X(\R)\neq \emptyset$, then $X$, $X^{[1, 2]}$, $X^{[2,3]}$  and $X^{[3]}$ are all maximal.
	\end{enumerate} 
\end{thm}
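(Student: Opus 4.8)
The plan is to exploit the explicit birational descriptions of these Hilbert schemes recorded in Remark \ref{rmk:ExplicitConstruction} together with the operations already shown to preserve maximality in Section \ref{sec:Operations}. The overarching strategy is to relate each space to $X$ and to products/symmetric powers of $X$ via blow-ups (Proposition \ref{prop:Blowup}), projective bundles (Proposition \ref{prop:ProjBun}), and odd-degree or quotient maps (Proposition \ref{prop:SurjectionOddDegree}), so that maximality propagates in the desired direction.

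First I would prove part (i). Using the incidence description $X^{[1,2]} \subset X \times X^{[2]}$, I recall that $X^{[1,2]}$ is isomorphic to the blow-up of $X \times X$ along the diagonal $\Delta$. Indeed, a flag $(p \in Z)$ with $Z$ of length $2$ is equivalent to a point $p$ together with a tangent direction or a second point, which is precisely the data parametrized by $\Bl_\Delta(X\times X)$. Since $X$ is maximal, the product $X\times X$ is maximal by Lemma \ref{lemma:Product}, and the diagonal $\Delta \cong X$ is maximal by hypothesis; hence $\Bl_\Delta(X\times X) \cong X^{[1,2]}$ is maximal by Proposition \ref{prop:Blowup}. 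This settles (i) cleanly.

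For part (ii), the hypothesis is on $X^{[2]}$, so I must run the implications the other way. The key tool is the realization $X^{[2]} \cong \Bl_\Delta(X\times X)/\mathfrak{S}_2$ from Remark \ref{rmk:ExplicitConstruction}. To deduce that $X$ is maximal I would use that $X^{[2]}$ contains the symmetric square $X^{(2)}$ information: the Hilbert--Chow morphism $X^{[2]}\to X^{(2)}$ and the structure of $X^{(2)}$, together with the fact that $X(\R)\neq\emptyset$ lets me embed $X$ (via the ``fat point at a fixed real basepoint'' or via a section of the relevant fibration) so that the restriction map on equivariant cohomology factors through $X$; concretely, fixing a real point $x_0 \in X(\R)$ gives a real embedding $X \hookrightarrow X^{[1,2]}$ or a suitable correspondence whose induced map on $H^*(-,\F2)$ is split injective, so that surjectivity of the restriction map for $X^{[2]}$ (equivalently $X^{[1,2]}$, via Proposition \ref{prop:Blowup} and the blow-up square) forces surjectivity for $X$. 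Once $X$ is known to be maximal, part (i) immediately yields maximality of $X^{[1,2]}$.

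It then remains to handle $X^{[3]}$ and $X^{[2,3]}$, and I expect \emph{this} to be the main obstacle. The Hilbert cube has no description as a single blow-up; following \cite{ShenVial-Hilb3} (cited in Remark \ref{rmk:ExplicitConstruction}) one resolves $X^3 \dashrightarrow X^{[3]}$ by successive blow-ups along real centers, quotients by $\mathfrak{S}_3$, and contracts a divisor. The plan is to apply Proposition \ref{prop:Blowup} at each blow-up step (checking that every center is itself maximal, which should follow inductively from maximality of $X$, its products, and the Hilbert squares/flags already treated), then invoke Franz's Theorem \ref{thm:Franz} to pass through the $\mathfrak{S}_3$-quotient, and finally control the divisorial contraction by Proposition \ref{prop:SurjectionOddDegree} or a direct Leray--Hirsch/blow-up-formula comparison as in Remark \ref{rmk:FlipFlop}. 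The delicate point will be verifying that the blow-up centers appearing in the Shen--Vial construction are maximal and that the final contraction preserves surjectivity of the restriction map; for $X^{[2,3]}$ I would argue similarly, realizing it as a blow-up or flag bundle over $X^{[2]}$ (now known maximal) with maximal center, and conclude by Proposition \ref{prop:Blowup} or Remark \ref{rmk:FlagBundles}.
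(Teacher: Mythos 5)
Part (i) of your proposal is correct and coincides with the paper's proof. In part (ii), however, there are two genuine problems. First, your deduction of the maximality of $X$ from that of $X^{[2]}$ is only gestured at: you invoke ``a suitable correspondence whose induced map on $H^*(-,\F2)$ is split injective'' without producing it, and your parenthetical claim that maximality of $X^{[2]}$ is ``equivalently'' maximality of $X^{[1,2]}$ via Proposition \ref{prop:Blowup} runs that proposition backwards --- it transfers maximality from the base and center \emph{to} the blow-up, not conversely --- while the natural maps $X^{[1,2]}\to X^{[2]}$ and $\Bl_{\Delta}(X\times X)\to X^{[2]}$ have even degree $2$, so Proposition \ref{prop:SurjectionOddDegree} is unavailable; Remark \ref{rmk:LossOfMaximality} shows such degree-$2$ quotients can genuinely destroy maximality, so no formal equivalence holds a priori. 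The missing computation is concrete: with $\tau\colon \Bl_{\Delta}(X\times X)\to X\times X$ and $g\colon \Bl_{\Delta}(X\times X)\to X^{[2]}$ the two real morphisms, one checks $\tau_*g^*g_*\tau^*(\gamma\otimes[X])=\gamma\otimes[X]+[X]\otimes\gamma$ for $\gamma\neq[X]$; maximality of $X^{[2]}$ then lifts this class to $H^*_G(X\times X,\F2)$, and restricting along $i_p\colon x\mapsto(x,p)$ for a real point $p$ recovers $\gamma$ in the image of $H^*_G(X,\F2)\to H^*(X,\F2)$.

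Second, your route to $X^{[3]}$ through the Shen--Vial construction would fail at the quotient step. Theorem \ref{thm:Franz} concerns quotients of the power $X^n$ by subgroups of $\mathfrak{S}_n$; it says nothing about the quotient of an iterated blow-up of $X^3$ by $\mathfrak{S}_3$, and maximality is \emph{not} preserved under general finite even-degree quotients --- again exactly the phenomenon of Remark \ref{rmk:LossOfMaximality}. The paper avoids this entirely: it identifies $X^{[2,3]}\cong\Bl_{X^{[1,2]}}(X\times X^{[2]})$, whose base is maximal by Lemma \ref{lemma:Product} and whose center $X^{[1,2]}$ is maximal by part (i), so Proposition \ref{prop:Blowup} applies; and then the forgetful map $X^{[2,3]}\to X^{[3]}$ is generically finite of degree $3$ (odd), so Proposition \ref{prop:SurjectionOddDegree} yields maximality of $X^{[3]}$. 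Your closing remark about realizing $X^{[2,3]}$ as a blow-up over $X^{[2]}$ points in the right direction, but you need the specific identification of the base as $X\times X^{[2]}$ and the center as $X^{[1,2]}$, and it is $X^{[2,3]}$, not the Shen--Vial resolution, that gives the odd-degree dominating variety for $X^{[3]}$.
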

\begin{proof}
(i) It is easy to see that $X^{[1,2]}$ is isomorphic to $\Bl_{\Delta}(X\times X)$, the blow-up of $X\times X$ along its diagonal, whose maximality follows from Lemma \ref{lemma:Product} and Proposition \ref{prop:Blowup}.\\
(ii) Denote by $\tau\colon \Bl_{\Delta}(X\times X)\to X\times X$ and $g\colon \Bl_{\Delta}(X\times X)\to X^{[2]}$ the natural morphisms, both defined over $\R$. For any class $\gamma\neq [X]\in H^*(X, \F2)$, we first remark that 
\[\tau_*g^*g_*\tau^*(\gamma\otimes [X])=\gamma\otimes [X]+[X]\otimes \gamma \in H^*(X\times X, \F2).\]
Since $g_*\tau^*(\gamma\otimes [X])\in H^*(X^{[2]}, \F2)$, which is the image of  some $\alpha\in H^*_G(X^{[2]}, \F2)$ by the maximality assumption, we obtain that
$$\gamma\otimes [X]+[X]\otimes \gamma=\tau_*g^*(\alpha)\in \im \left(H^*_G(X\times X, \F2)\to H^*(X\times X, \F2)\right).$$
Take any real point $p\in X(\R)$, define the $\R$-morphism $i_p\colon X\to X\times X$ sending $x$ to $(x, p)$. It follows that $i_p^*(\gamma\otimes [X]+[X]\otimes \gamma)=\gamma$ is in the image of $H^*_G(X, \F2)\to H^*(X, \F2)$. Therefore $X$ is maximal by Proposition \ref{prop:MaximalCriterion}. (The maximality of $X$ in the surface case is also shown by Kharlamov--R{\u{a}}sdeaconu in \cite[Theorem 1.1]{Kharlamov-Rasdeaconu-HilbertSquare} with a different argument and without the assumption $X(\mathbb{R})\neq\emptyset$.)

Using (i), $X^{[1,2]}$ is maximal as well. 

 For $X^{[2,3]}$, we first note that the universal subscheme in $X\times X^{[2]}$ is canonically identified with the nested Hilbert scheme $X^{[1,2]}$, which is in turn isomorphic to $\Bl_{\Delta}(X\times X)$ as mentioned above. 
It is known that $X^{[2,3]}$ is isomorphic to the blow-up of $X\times X^{[2]}$ along its universal subscheme. Hence 
\[X^{[2,3]}\cong \Bl_{X^{[1,2]}}(X\times X^{[2]}).\]
All the involved embeddings are real morphisms between smooth real varieties. Therefore we can conclude the maximality of $X^{[2,3]}$ by invoking Lemma \ref{lemma:Product}, Proposition \ref{prop:Blowup} together with the maximalities of $X$, $X^{[2]}$ and $X^{[1,2]}$.

Finally, for $X^{[3]}$, note that there is a natural surjection $\pi\colon X^{[2,3]}\to X^{[3]}$ by forgetting the length 2 subscheme. It is easy to see that $\pi$ is generically finite of degree $3$ (an odd number!). Proposition \ref{prop:SurjectionOddDegree} allows us to deduce the maximality of $X^{[3]}$ from that of $X^{[2,3]}$.
\end{proof}

\begin{rmk}[Loss of maximality in Hilbert squares]
	\label{rmk:LossOfMaximality}
	It is quite surprising that Kharlamov and R{\u{a}}sdeaconu \cite{Kharlamov-Rasdeaconu-HilbertSquare} recently discovered the existence of maximal real surfaces with non-maximal Hilbert squares. In fact, they showed in \cite[Theorem 1.2]{Kharlamov-Rasdeaconu-HilbertSquare} that for a real surface $X$ with $H^1(X, \F2)=0$, its Hilbert square $X^{[2]}$ is maximal if and only if $X(\R)$ is connected. This finding led to the discovery of many examples of maximal surfaces with non-maximal Hilbert square, such as K3 surfaces. We will provide in Theorem \ref{thm:NonMaxHilbCubic4} a higher-dimensional counter-example: the Hilbert square of a maximal real cubic fourfold is \textit{never} maximal.
\end{rmk}

Although taking Hilbert square fails to preserve maximality in general, we provide an interesting example where Theorem \ref{thm:HilbertSquareCube} can be applied. More examples in the surface case can be found in Section \ref{sec:HilbertSchemes}. 
\begin{thm}[Cubic threefolds]
	\label{thm:Cubic3folds}
	Let $X$ be a smooth real cubic threefold. If $X$ is maximal, then $X^{[2]}$, $X^{[3]}$, $X^{[1,2]}$, $X^{[2,3]}$ are all maximal.
\end{thm}
\begin{proof}
	By Krasnov \cite{Krasnov-Cubic}, the maximality of $X$ implies that its Fano surface of lines $F:=F(X)$ is also maximal (see Proposition \ref{prop:Cubic3}). Applying the realization functors $H^*(-, \F2)$ and $H^*(-(\mathbb{R}), \F2)$ to the motivic Galkin--Shinder relation \cite{GalkinShinder} established in \cite[Corollary 18]{BelmansFuRaedschelders}, we obtain the following relations between the total $\F2$-Betti numbers of $X$, $F$ and $X^{[2]}$:
	\begin{align*}
		b_*(F, \F2)+4 b_*(X, \F2)&=b_*(X^{[2]}, \F2),\\
		b_*(F(\mathbb{R}), \F2)+4 b_*(X(\mathbb{R}), \F2)&=b_*(X^{[2]}(\mathbb{R}), \F2).
	\end{align*}
	Therefore, $X^{[2]}$ is also maximal. We can conclude by applying Theorem \ref{thm:HilbertSquareCube}.
\end{proof}

\section{Punctual Hilbert schemes of surfaces}
\label{sec:HilbertSchemes}
As discussed in the previous section, for a smooth complex surface $S$, the punctual Hilbert scheme/Douady space $S^{[n]}$ is smooth for any $n\geq 1$ (\cite{Fogarty}). Moreover, a real structure on $S$ naturally induces a real structure on $S^{[n]}$. This section aims to prove the maximality of Hilbert powers of certain maximal $\mathbb{R}$-surfaces:

\begin{thm}\label{thm:HilbertPower}
	Let $(S, \sigma)$ be a maximal smooth projective real surface  with $H^1(S, \F2)=0$.
	If $S(\mathbb{R})$ is connected, then $S^{[n]}$ is  maximal for any $n\geq 1$.
\end{thm}

\begin{rmk} 
		\label{rmk:ConditionsOnS}
		The case of $n=2$ in Theorem \ref{thm:HilbertPower} gives a different proof of the ``if'' part of Kharlamov--R{\u a}sdeaconu's result \cite[Theorem 1.2]{Kharlamov-Rasdeaconu-HilbertSquare}.  It is worth noting some details about the conditions in Theorem \ref{thm:HilbertPower}.
	\begin{itemize}
		\item The condition on the connectedness of $S(\mathbb{R})$ can be equivalently replaced by $\rk H^2(S, \mathbb{Z})^{\sigma}=0$, or by $c_1$-maximality (Definition \ref{def:c1Max}), as shown in Proposition \ref{prop:MaximalAndC1Maximal}.
		\item By Poincar\'e duality and the universal coefficient theorem, the condition $H^1(S, \F2)=0$ is equivalent to $b_{\operatorname{odd}}(S)=0$ and  $H^*(S, \Z)$ has no 2-torsion. By G\"ottsche \cite{Goettsche} and Totaro \cite[Theorem 3.1]{TotaroHilbn}, this condition implies that for any $n\geq 1$, the odd Betti numbers of $S^{[n]}$ vanish and the integral cohomology ring of $S^{[n]}$ is 2-torsion-free.
		% i.e.
		%\[ b_{\operatorname{odd}}(S^{[n]})=0  \text{ and } H^*(S^{[n]}, \Z) \text{ has no 2-torsion.}\]
		\item All the conditions in Theorem \ref{thm:HilbertPower} are invariant under blowups at real points.
	\end{itemize}
\end{rmk}

\begin{rmk}[Nested Hilbert schemes]
	By Theorem \ref{thm:HilbertSquareCube}, for any maximal real surface $S$ satisfying the conditions in Theorem \ref{thm:HilbertPower}, the nested Hilbert schemes $S^{[1,2]}$ and $S^{[2,3]}$ are also maximal. However, by Cheah \cite{Cheah}, in dimension 2, all the nested Hilbert schemes $S^{[n,n+1]}$ are smooth.  This raises the interesting question whether $S^{[n, n+1]}$ is maximal for any $n$ in this case.
\end{rmk}

Our strategy for proving Theorem \ref{thm:HilbertPower} is similar to that of Theorem \ref{thm:VBAC}. We will rely on the following result of W.~Li and Z.~Qin \cite[Theorem 1.2]{LiQin08} (based on \cite{LiQinWang-MathAnn02, LiQinWang-IMRN02, LiQinWang-Crelle03, QinWang05}) on the generators of the cohomology ring of $S^{[n]}$, which can be viewed as the analogue of Atiyah--Bott's Theorem \ref{thm:AtiyahBott}.

\begin{thm}[Li--Qin]
	\label{thm:LiQin}
Let $S$ be a smooth projective complex surface with $b_1(S)=0$, and $n$ a positive integer. Then $H^*(S^{[n]}, \Z)/\Tors$ is generated by the following three types of elements:
\begin{enumerate}
\item[(i)] For $1\leq r\leq n$, the Chern class $c_r(\mathcal{O}_S^{[n]})\in H^{2r}(S^{[n]}, \Z)$, where $\mathcal{O}_S^{[n]}$ is the tautological rank-$n$ bundle defined as $p_*(\mathcal{O}_{\mathcal{Z}_n})$ where $p\colon S^{[n]}\times S\to S^{[n]}$ is the projection and $\mathcal{Z}_n\subset S^{[n]}\times S$ is the universal subscheme.
\item[(ii)] For $1\leq r\leq n$, and $\alpha\in H^2(S, \Z)/\Tors$,  the class $\mathds{1}_{-(n-r)}\mathfrak{m}_{(1^r), \alpha}|0\rangle$.
\item[(iii)] For $1\leq r\leq n$,  the class $\mathds{1}_{-(n-r)}\mathfrak{p}_{-r}(\pt)|0\rangle$, where $\pt\in H^4(S, \Z)$ denotes the class of a point. 
\end{enumerate}
Here $|0\rangle$ denotes the positive generator of $H^0(S^{[0]}, \Z)$, the operator $\mathfrak{p}$  is Nakajima's operator \cite{Nakajima}, and the operators $\mathds{1}$, $\mathfrak{m}$ are defined in \cite{LiQinWang-Crelle03, QinWang05, LiQin08}; see the explanation in the remark below.
\end{thm}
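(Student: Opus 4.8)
The plan is to promote Nakajima's \emph{additive} description of the cohomology to the asserted \emph{ring} generation, using the Chern character operators of the tautological sheaves as the bridge between cup product and the Heisenberg action. Recall from Nakajima \cite{Nakajima} that the total cohomology $\mathbb{H}:=\bigoplus_{m\geq 0}H^*(S^{[m]},\Q)$ is an irreducible highest weight module over the Heisenberg algebra generated by the operators $\mathfrak{p}_{-k}(\gamma)$ and $\mathfrak{p}_{k}(\gamma)$ ($k>0$, $\gamma\in H^*(S,\Q)$), with highest weight vector $|0\rangle$. Since $b_1(S)=0$ forces $H^1(S)=H^3(S)=0$, a basis of $H^*(S,\Q)$ consists of $1_S$, classes $\alpha_i\in H^2(S)$ and the point class $\pt\in H^4(S)$; consequently the monomials $\mathfrak{p}_{-\lambda_1}(\gamma_1)\cdots\mathfrak{p}_{-\lambda_\ell}(\gamma_\ell)|0\rangle$, indexed by a partition $\lambda=(\lambda_1\geq\cdots\geq\lambda_\ell)$ of $n$ together with a choice of basis labels $\gamma_j$, form a $\Q$-basis of $H^*(S^{[n]},\Q)$. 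It therefore suffices to show that every such Nakajima monomial lies in the subring $R\subseteq H^*(S^{[n]})$ generated by the classes of types (i), (ii), (iii) (the reverse containment being trivial), and then to upgrade this to the integral statement modulo torsion.

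The key tool is the interaction between cup product and the Heisenberg operators. Following Lehn, Lehn--Sorger and Li--Qin--Wang \cite{LiQinWang-MathAnn02,LiQinWang-IMRN02,LiQinWang-Crelle03,QinWang05}, the Chern characters of the tautological sheaf $\mathcal{O}_S^{[m]}$ define operators $\mathfrak{G}_i(1_S)$ on $\mathbb{H}$ whose action \emph{is} cup product by (polynomials in) the type (i) classes $c_r(\mathcal{O}_S^{[m]})$, and these satisfy explicit commutation relations with the Nakajima operators, schematically
\[
[\mathfrak{G}_i(1_S),\,\mathfrak{p}_{-1}(\beta)]\;=\;\mathfrak{p}_{-(i+1)}(\beta)\;+\;(\text{lower terms carrying the canonical class }K_S\text{ or the point class}).
\]
Thus the three families of generators play complementary roles: types (ii) and (iii) supply low-weight pieces directly --- (iii) is, up to the filling prefactor $\mathds{1}_{-(n-r)}$ (the operator adjoining $n-r$ points decorated by the fundamental class, i.e.\ up to a scalar $\mathfrak{p}_{-1}(1_S)^{n-r}$), the single-operator class $\mathfrak{p}_{-r}(\pt)\,\mathfrak{p}_{-1}(1_S)^{n-r}|0\rangle$, while (ii) is an explicit class carrying an $H^2$-label $\alpha$ with controlled leading Nakajima term --- whereas type (i) supplies the $1_S$-labelled Chern character tower and, through the commutator above, raises the weight of an $\alpha$- or $\pt$-labelled operator. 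In this way one produces $\mathfrak{p}_{-k}(\gamma)|0\rangle\in R$ for every weight $k$ and every basis class $\gamma$, modulo classes built from strictly fewer Nakajima factors.

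The argument is then completed by a triangular induction. Fix a total refinement of the partial order on Nakajima monomials given by the number of parts $\ell(\lambda)$, refined by the multiset of cohomological labels and by $m$ (so that stripping off a $\mathfrak{p}_{-1}(1_S)$ factor strictly lowers the order). Given a target monomial $\mathfrak{p}_{-\lambda}(\gamma)|0\rangle$, one realises it as a cup product of the single-part classes $\mathfrak{p}_{-\lambda_j}(\gamma_j)|0\rangle$ produced above; by the commutation relations this cup product equals the target plus a $\Q$-combination of monomials strictly lower in the chosen order, all of which lie in $R$ by the inductive hypothesis. Solving for the target (its coefficient being a nonzero rational) shows $\mathfrak{p}_{-\lambda}(\gamma)|0\rangle\in R$. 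The base cases $\ell(\lambda)\leq 1$ are exactly the generators of types (ii), (iii) together with the $1_S$-towers coming from (i), and the induction closes.

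The main obstacle is precisely this triangularity: one must isolate the leading term in the cup product of single-part classes, confirm that its coefficient does not vanish, and verify that all correction terms --- including those carrying the canonical class $K_S$ that arise from the tangent geometry of $S$ in the commutation relations --- are genuinely lower in the order. This is the technical content assembled across \cite{LiQinWang-MathAnn02,LiQinWang-IMRN02,LiQinWang-Crelle03,QinWang05} and \cite{LiQin08}, and rests on the exact structure constants of $[\mathfrak{G}_i(1_S),\mathfrak{p}_{-k}(\beta)]$. A second, independent difficulty is the integral refinement: to conclude generation of $H^*(S^{[n]},\Z)/\Tors$ rather than merely of the rational cohomology, one must track the commutation relations and the transition matrices with $\Z$-coefficients and check that no denominators survive after clearing the filling operators $\mathds{1}_{-(n-r)}$ --- this is where the passage to the free quotient and the hypothesis $b_1(S)=0$ are used. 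Once both points are in hand, the generation statement follows formally from the Nakajima basis as above.
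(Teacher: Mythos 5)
First, a point of calibration: the paper does not prove this statement at all --- it is quoted as Li--Qin's theorem \cite[Theorem 1.2]{LiQin08} (built on \cite{LiQinWang-MathAnn02, LiQinWang-IMRN02, LiQinWang-Crelle03, QinWang05}), and the surrounding text (Remark \ref{rmk:Operators}) only explains the operators appearing in its statement. So there is no internal proof to compare yours against; what can be assessed is whether your outline would constitute a proof of the cited result.

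As an outline of the \emph{rational}-coefficient statement, your sketch is faithful to how the literature proceeds: Nakajima's irreducibility of $\bigoplus_m H^*(S^{[m]},\Q)$ as a Heisenberg module, Lehn-type Chern character operators for the tautological bundles, commutation relations that raise the weight of $\alpha$- and $\pt$-labelled operators, and a triangular induction on the number of Nakajima factors. But the substance of the theorem is \emph{integral} generation of $H^*(S^{[n]},\Z)/\Tors$, and this is where your argument has a genuine gap rather than a deferred verification. The Nakajima monomials form only a $\Q$-basis; integrally, denominators are unavoidable --- for instance $\mathfrak{p}_{-1}(1_S)^{n-r}=(n-r)!\,\mathds{1}_{-(n-r)}$, so your ``up to a scalar'' identification of the type (iii) classes already destroys integrality --- and this is precisely why the theorem's generators are phrased through the operators $\mathds{1}_{-r}$ and $\mathfrak{m}_{(1^r),\alpha}$, which are integral because they are correspondences given by nested Hilbert schemes (\cite[Lemma 3.3]{QinWang05}, cf.\ Remark \ref{rmk:Operators}), not because denominators happen to cancel in the Heisenberg calculus. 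Li--Qin's actual argument has to construct integral monomial classes out of these correspondence operators and prove that the transition matrix to an integral basis of $H^*(S^{[n]},\Z)/\Tors$ is, in a suitable ordering, triangular with $\pm 1$ diagonal entries; ``tracking the commutation relations and the transition matrices with $\Z$-coefficients'' in your final paragraph names this problem but does not solve it, and it is not a formal consequence of the rational triangularity, since the structure constants of $[\mathfrak{G}_i(1_S),\mathfrak{p}_{-k}(\beta)]$ carry nontrivial denominators and $K_S$-corrections. In short: you have reconstructed the correct strategy over $\Q$ (with the technical triangularity itself also deferred to the cited papers), but the integral refinement --- which is the actual content of the theorem, and the reason it can be applied in the paper's proof of Theorem \ref{thm:HilbertPower} with $\F2$-coefficients --- is asserted rather than proved.
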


\begin{rmk}[Operators]
	\label{rmk:Operators}
	Let us give some more details on the operators $\mathfrak{p}$, $\mathds{1}$ and $\mathfrak{m}$. In the discussion below, the coefficients of the cohomology can be chosen arbitrarily (for example $\Z$), and are omitted.
	\begin{enumerate}
		\item For any $r\geq 0$ and any $\alpha\in H^*(S)$, the (creation) Nakajima operator $\mathfrak{p}_{-r}(\alpha)$ sends, for any $j\geq 0$, an element $\beta\in H^*(S^{[j]})$ to the class $q_*(p^*(\beta)\smile\rho^*(\alpha)\smile [Q_{j+r, j}])\in H^*(S^{[j+r]})$, where $Q_{j+r, j}:=\{(Z, x, Z')\in S^{[j]}\times S\times S^{[j+r]}~|~ \operatorname{supp}(I_Z/I_{Z'})=\{x\}\}$, and $p, \rho, q$ are the projections from $S^{[j]}\times S\times S^{[j+r]}$ to $S^{[j]}$, $S$ and $S^{[j+r]}$ respectively.
		In particular, taking $j=0$, then $\mathfrak{p}_{-r}(\alpha)|0\rangle$ is the image of $\alpha$ via the correspondence $[\Gamma_r]_*\colon H^*(S)\to H^*(S^{[r]})$, where		
		$\Gamma_r:=\{(x, Z)\in S\times S^{[r]}~|~ \operatorname{supp}(Z)=\{x\}\}$.
		\item For any $r\geq 0$, the operator $\mathds{1}_{-r}$ is defined in \cite[Definition 4.1]{LiQinWang-Crelle03}, \cite[(2.7)]{LiQin08} as $\mathds{1}_{-r}:=\frac{1}{r!}\mathfrak{p}_{-1}(1_S)^r$. Despite of the apparent denominator, in \cite[Lemma 3.3]{QinWang05} it is proved that $\mathds{1}_{-r}$ is an \textit{integral} operator. Indeed, it can be equivalently defined as follows: for any $j\geq 0$, the operator $\mathds{1}_{-r}: H^*(S^{[j]})\to H^*(S^{[j+r]})$ is the correspondence $[S^{[j, j+r]}]_*$, where $S^{[j, j+r]}:=\{ (Z, Z') \in S^{[j]}\times S^{[j+r]}~|~Z\subset Z'\}$ is the nested Hilbert scheme. 
		\item The operator $\mathfrak{m}$ is defined in \cite[\S 4.2]{QinWang05} and in \cite[Definition 3.3]{LiQin08}. The precise definition is somewhat involved. Let us only mention here that when $\alpha=[C]$ for a smooth irreducible curve $C$ in $S$, then for any partition $\lambda=(\lambda_1\geq \cdots \geq \lambda_N)$ of $n$, we have $\mathfrak{m}_{\lambda, C}|0\rangle= [L^\lambda C]$, where $L^\lambda C$ is the closure in $S^{[n]}$ of $\{\lambda_1x_1+\cdots+\lambda_Nx_N | x_i \in S \text{ distinct}\}.$ An interpretation which works for $\lambda=(1^n)$ and for any class $\alpha$, will be given and used in the proof of Theorem \ref{thm:HilbertPower} below.
		
	\end{enumerate}
\end{rmk}

\begin{proof}[Proof of Theorem \ref{thm:HilbertPower}]
	By Remark \ref{rmk:ConditionsOnS}, the condition $H^1(S, \F2)=0$ implies that $b_{\operatorname{odd}}(S^{[n]})=0$ and $H^*(S^{[n]}, \Z)$ has no 2-torsion. By the universal coefficient theorem, the cohomology ring $H^*(S^{[n]}, \F2)$ is generated by the same collection of generators in Theorem \ref{thm:LiQin} by passing to $\F2$-coefficients. Like in Theorem \ref{thm:VBAC}, we only need to show that all the generators are in the image of the restriction map $H^*_G(S^{[n]}, \F2)\to H^*(S^{[n]}, \F2)$.

	- For generators in $(i)$, since the tautological vector bundle $\mathcal{O}_S^{[n]}$ is a holomorphic real vector bundle on $S^{[n]}$ by construction, hence defines a class $[\mathcal{O}_S^{[n]}]\in \KR^0(S^{[n]})$. As $c^G_r(\mathcal{O}_S^{[n]})\in H^*_G(S^{[n]}, \F2)$ is mapped to $c_r(\mathcal{O}_S^{[n]}) \in H^*(S^{[n]}, \F2)$, the latter is in the image of the restriction map.
	
	- For generators in $(ii)$, given a class $\alpha\in H^2(S, \F2)$,
	by Proposition \ref{prop:MaximalAndC1Maximal}, $S$ is $c_1$-maximal, i.e.~$c_1\colon \KR^0(S)\to H^2(S, \F2)$ is surjective, there exists an element $L\in \KR^0(S)$ such that $c_1(L)=\alpha$. Up to replacing $L$ by its determinant, one can in fact  assume that $L$ is a real vector bundle of rank 1.
	
	We denote  $\widetilde{\alpha}:=c_1^G(L)\in H^2_G(S, \F2)$ the equivariant first Chern class of $L$. Clearly, $\widetilde{\alpha}$ is mapped to $\alpha$ under the restriction map.
	Let  $\mathcal{Z}_n\subset S^{[r]}\times S$ be the universal subscheme. 
	Let $p_1, p_2$ be the two natural projections from $\mathcal{Z}_r$ to $S^{[r]}$ and $S$, which are real morphisms since $\mathcal{Z}_n$ is a subscheme defined over $\R$.
	We thus obtain an element $p_{1,!}(p_2^*(L))\in \KR^0(S^{[n]})$. By the proof of \cite[Lemma 3.5]{LiQin08},
	 we have the following equality (which holds integrally):
	  $$c_r(p_{1,!}(p_2^*(L)))=\mathfrak{m}_{(1^r), \alpha}\in H^*(S^{[r]}, \F2).$$ Therefore, $c^G_r(p_{1,!}(p_2^*(L)))\in H_G^*(S^{[r]}, \F2)$ is mapped to $\mathfrak{m}_{(1^r), \alpha}$ via the restriction map. 
	
	Now consider the following commutative diagram
	\begin{equation}
	\xymatrix{
		 H^*_G(S^{[r]}, \F2)\ar[r]^{[S^{[r,n]}]_*}\ar[d]& H^*_G(S^{[n]}, \F2)\ar[d]\\
		H^*(S^{[r]}, \F2)\ar[r]^{[S^{[r,n]}]_*}& H^*(S^{[n]}, \F2)
	}
	\end{equation}
	where the vertical arrows are restriction maps, and the horizontal maps are given by the correspondences induced by the nested Hilbert scheme $S^{[r,n]}:=\{(Z, Z')\in S^{[r]}\times S^{[n]}~|~ Z \subset Z'\}$, viewed as algebraic cycles (defined over $\R)$. By Remark \ref{rmk:Operators}, the bottom arrow sends $\mathfrak{m}_{(1^r), \alpha}$ to $\mathds{1}_{-(n-r)}\mathfrak{m}_{(1^r), \alpha}$. Since we just proved that $\mathfrak{m}_{(1^r), \alpha}$ is in the image of the left vertical arrow, we conclude by the commutativity of the diagram that $\mathds{1}_{-(n-r)}\mathfrak{m}_{(1^r), \alpha}$ is in the image of the right vertical arrow.
		
	- For generators in $(iii)$, by definition, $\mathds{1}_{-(n-r)}\mathfrak{p}_{-r}(\pt)|0\rangle$ is the image of the class $[\pt]\in H^4(S)$ under the following composition of correspondences:
	\[H^*(S)\xrightarrow{[\Gamma_r]_*} H^*(S^{[r]})\xrightarrow{[S^{[r,n]}]_*} H^*(S^{[n]}),\]
	where $\Gamma_r:=\{(x, Z)\in S\times S^{[r]}~|~ \operatorname{supp}(Z)=\{x\}\}$, $S^{[r,n]}$ is as above, both viewed as algebraic cycles (see Remark \ref{rmk:Operators}). However, $\Gamma_n$ and $S^{[r,n]}$ (although singular) are defined over $\R$, they induce equivariant correspondences; we have the following commutative diagram:
	\begin{equation}
		\xymatrix{
			H^*_G(S, \F2)\ar[r]^{[\Gamma_r]_*} \ar@{->>}[d]& H^*_G(S^{[r]}, \F2)\ar[r]^{[S^{[r,n]}]_*}\ar[d]& H^*_G(S^{[n]}, \F2)\ar[d]\\
			H^*(S, \F2)\ar[r]^{[\Gamma_r]_*} & H^*(S^{[r]}, \F2)\ar[r]^{[S^{[r,n]}]_*}& H^*(S^{[n]}, \F2)
		}
	\end{equation}
	where all the vertical arrows are restriction maps and the left one is surjective since $S$ is maximal (Lemma \ref{lemma:RelationMaximalities}). Therefore, $\mathds{1}_{-(n-r)}\mathfrak{p}_{-r}(\pt)|0\rangle=[S^{[r,n]}]_*\circ [\Gamma_r]_*([\pt])$ is in the image of the restriction map. 
	
	To summarize, all types of generators lie in the image of the restriction map $H^*_G(S^{[n]}, \F2)\to H^*(S^{[n]}, \F2)$, hence the latter is surjective, and $S^{[n]}$ is maximal by Proposition \ref{prop:MaximalCriterion}.
\end{proof}

We provide two types of surfaces to which Theorem \ref{thm:HilbertPower} applies: rational surfaces and some surfaces of Kodaira dimension 1.
\subsection{Example I: rational surfaces}

The first type of examples includes $\mathbb{P}^2$, $\mathbb{P}^1\times \mathbb{P}^1$, and Hirzebruch surfaces, as well as blow-ups of these surfaces at real points.
\begin{cor}
	\label{cor:RealRationalSurface}
The punctual Hilbert schemes of a maximal $\R$-rational real surface are maximal.
\end{cor}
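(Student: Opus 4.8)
The plan is to deduce the corollary directly from Theorem \ref{thm:HilbertPower}, whose two hypotheses on a surface $S$ are (a) the topological condition $(\star)$, namely $b_1(S)=0$ and $H^2(S,\Z)$ has no $2$-torsion, and (b) $c_1$-maximality. Since the conclusion (maximality of $S^{[n]}$ for all $n\geq 1$) is exactly what the corollary asserts, the whole task reduces to verifying (a) and (b) for a maximal $\R$-rational real surface $S$.

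First I would check condition $(\star)$. Because $S$ is $\R$-rational, the underlying complex surface $S(\C)$ is a smooth projective rational surface, hence simply connected; in particular $b_1(S)=0$. For the $2$-torsion-freeness of $H^2(S,\Z)$, I would use the real minimal model theory recalled in the proof of Lemma \ref{lemma:KMaxRationalSurface}: up to blow-ups at real points or at conjugate pairs, $S$ is built from one of $\PP^2$, $\PP^1\times\PP^1$, or a Hirzebruch surface $\mathbb{F}_n$, each of which has torsion-free integral cohomology, and since blowing up a smooth center preserves torsion-freeness of $H^*(-,\Z)$, it follows that $H^2(S,\Z)$ is torsion free. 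Thus $(\star)$ holds. Second, for the $c_1$-maximality I would simply invoke Lemma \ref{lemma:KMaxRationalSurface}, which states precisely that a maximal $\R$-rational real surface is $c_1$-maximal (indeed even K-maximal).

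With both hypotheses of Theorem \ref{thm:HilbertPower} in place, that theorem applies verbatim and gives the maximality of $S^{[n]}$ for every $n\geq 1$. I do not anticipate any genuine obstacle here: the corollary is a clean combination of the two preceding results, and the only point requiring (mild) care is the confirmation of the topological condition $(\star)$, which is entirely standard for rational surfaces. All the substantive work has already been absorbed into Theorem \ref{thm:HilbertPower} and Lemma \ref{lemma:KMaxRationalSurface}.
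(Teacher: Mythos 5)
Your proposal is correct and follows exactly the route the paper takes: verify condition $(\star)$ and the $c_1$-maximality (the latter via Lemma \ref{lemma:KMaxRationalSurface}), then apply Theorem \ref{thm:HilbertPower}. The paper dismisses $(\star)$ as obvious for rational surfaces, whereas you spell it out (simple connectedness gives $b_1=0$ and torsion-freeness of $H^2(S,\Z)$); this is a harmless elaboration, not a different argument.
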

\begin{proof}
	By Theorem \ref{thm:HilbertPower}, we only need to verify that $H^1(S, \F2)=0$ and that $S(\mathbb{R})$ is connected (or equivalently, $c_1$-maximal). The former is obvious for rational surfaces, and the latter follows from Lemma \ref{lemma:KMaxRationalSurface} and Lemma \ref{lemma:RelationMaximalities}.
\end{proof}

\begin{rmk}[Del Pezzo of degree 1]
	If we consider a more general class of surfaces, namely \textit{geometrically rational} surfaces, which are real surface $S$ with $S_{\C}$ rational, there is another type of maximal minimal real surface, namely $\mathbb{B}_1$, the real minimal del Pezzo surface of degree 1 whose real locus is $\mathbb{R}\mathbb{P}^2\coprod4 \mathbb{S}^2$, see \cite{Kollar-TopologyOfRealVar} \cite{Russo-RealDelPezzo}. By Proposition \ref{prop:MaximalAndC1Maximal}, $\mathbb{B}_1$ is not $c_1$-maximal (nor K-maximal).  According to the result of Kharlamov--R{\u{a}}sdeaconu \cite{Kharlamov-Rasdeaconu-HilbertSquare} mentioned in Remark \ref{rmk:LossOfMaximality}, the Hilbert square of $\mathbb{B}_1$ is not maximal. Furthermore, note that $\mathbb{B}_1$ is not Hodge expressive in the sense of \cite{BrugalleSchaffhauser}.
\end{rmk}

\subsection{Example II: generalized Dolgachev surfaces}
Let $m>1$ be an integer, and choose another auxiliary integer $0< k<m$.  Recall that for a relatively minimal elliptic surface $f\colon S\to C$ with a smooth fiber $S_c$ over a point $c\in C$, a \textit{logarithmic transformation} of order $m$ at $c$ produces another relatively minimal elliptic surface $f'\colon S'\to C$ with a multiple fiber $S'_c$ of multiplicity $m$ over $c$, such that $S\backslash S_c$ is isomorphic to $S'\backslash S'_c$ over $C\backslash\{c\}$. In fact, the logarithmic transformation can be performed more generally to replace a fiber of type $I_b$ in an elliptic fibration by a multiple fiber of type ${}_mI_b$. We refer to \cite[Chapter V, \S 13]{BHPV} for more details.

Dolgachev \cite{DolgachevSurface} used logarithmic transformations to construct simply connected irrational elliptic surfaces of Kodaira dimension 1 with vanishing geometric genus. Although all homeomorphic to the blow-up of $\mathbb{P}^2$ at 9 points \cite{Freedman-JDG82}, Donaldson \cite{Donaldson-JDG87} and Okonek--Van de Ven \cite{OkonekVandeVen} showed that Dolgachev surfaces provide infinitely many distinct diffeomorphic types.

For our purpose, we follow L\"{u}bke--Okonek \cite{Lubke-Okonek} to construct certain variants of Dolgachev surfaces and adapt them to real geometry. We consider two real smooth planar cubic curves intersecting at 9 distinct real points in $\mathbb{P}^2(\mathbb{R})$. Blowing up these 9 points, we get an $\mathbb{R}$-rational surface $S$ fibered in elliptic curves over $ \mathbb{P}^1$. The morphism 
\[f\colon S\to \mathbb{P}^1\]
is defined over $\mathbb{R}$.
For any odd integer $m>2$ and a generic pair of complex conjugate points $t, \bar{t} \in \mathbb{P}^1(\mathbb{C})\backslash \mathbb{P}^1(\mathbb{R})$, performing the logarithmic transformations at $t$ and $\bar{t}$ of the same order $m$ (and with the same auxiliary choice of $k$) produces another elliptic surface $S_{m,m}\to \mathbb{P}^1$ which is isomorphic to $S$ over $\mathbb{P}^1\backslash\{t, \bar{t}\}$.  The real structure on $S$ gives rise to a real structure on $S_{m,m}$. 

\begin{lemma}
	\label{lemma:DolgachevSurface}
The real locus of $S_{m,m}$ is $$S_{m,m}(\mathbb{R})\cong \underbrace{\mathbb{R}\mathbb{P}^2\#\cdots\#\mathbb{R}\mathbb{P}^2}_{10 \text{ copies}},$$
with $\F2$-Betti numbers 1, 10, 1. 

The $\F2$-Betti numbers of $S_{m,m}$ are 1, 0, 10, 0, 1. 
In particular, $S_{m, m}$ is a maximal real surface.
\end{lemma}
\begin{proof}
	Since the logarithmic transformation along two conjugate complex points does not change the real locus, $S_{m,m}(\mathbb{R})$ is the same as the real locus of the blow-up of $\mathbb{P}^2$ at 9 real points. As blowing up at a real point modifies the real locus by taking a connected sum with a copy of $\mathbb{R}\mathbb{P}^2$, the formula of $S_{m,m}(\mathbb{R})$ follows. Its $\F2$-Betti numbers can be easily computed.
	
	Contrary to the simple connectedness of the classical Dolgachev surfaces, we know that $\pi_1(S_{m,m})\cong\mathbb{Z}/m\mathbb{Z}$ (see \cite{DolgachevSurface} or \cite{Lubke-Okonek}). Since $m$ is odd, $H^1(S_{m,m}, \F2)=0$. The Euler characteristic is not changed by logarithmic transformations (hence is equal to $\chi_{\top}(\Bl_{9}\mathbb{P}^2)=12$), and therefore the $\F2$-Betti numbers of $S_{m,m}$ can be determined.
\end{proof}

\begin{cor}[Generalized Dolgachev surface]
	\label{cor:Dolgachev}
	The punctual Hilbert schemes of the real surfaces $S_{m,m}$ constructed above are maximal.
\end{cor}
\begin{proof}
By Lemma \ref{lemma:DolgachevSurface}, the surface $S_{m,m}$ satisfies all the conditions in Theorem \ref{thm:HilbertPower}, which implies that its punctual Hilbert schemes are maximal. 
\end{proof}

\begin{rmk}
	The surfaces $S_{m,m}$ provide infinitely many new examples as they have Kodaira dimension 1 (see \cite[P. 219]{Lubke-Okonek}) and
	by \cite[P. 220, Corollary]{Lubke-Okonek}, $\{S_{m,m}\}_m$ contains infinitely many diffeomorphic types.
\end{rmk}

\section{Moduli spaces of sheaves on the projective plane}
\label{sec:ModuliP2}
Given integers $r>0, c_1, c_2$, let $M:=M(r, c_1, c_2)$ be the moduli space of stable sheaves of rank $r$ with first and second Chern classes $c_1$ and $c_2$ on the complex projective plane $\PP^2_{\C}$. Assume that 
\begin{equation}\label{eqn:NumericalCondP2}
	\gcd\left(r, c_1, \frac{c_1(c_1+1)}{2}-c_2\right)=1,
\end{equation}
then $M$ is a smooth projective fine moduli space.

We equip $\PP^2_{\C}$ with its standard real structure $\sigma$. We claim that this induces a natural real structure  $\sigma_M$ on $M$:
\begin{align*}
	\sigma_M\colon M &\to M\\
	E & \mapsto \overline{\sigma^*(E)}.
\end{align*}
Indeed, $\sigma_M$ is clearly anti-holomorphic of order 2, we only need to show that the Chern classes of $\overline{\sigma^*(E)}$ and $E$ are the same: first, it is easy to see that $\sigma^*$ acts on cohomology as $\id$ on $H^0(\PP^2_{\C}, \Z)\oplus H^4(\PP^2_{\C}, \Z)$, and as $-\id$ on $H^2(\PP^2_{\C}, \Z)$ (because complex conjugate changes of orientation on $\C$). Therefore, 
\[c_i(\overline{\sigma^*(E)})=(-1)^i\sigma^*(c_i(E))=(-1)^i(-1)^ic_i(E)=c_i(E).\]
The claim is proved and $\sigma_M$ is a real structure. 

Recall that $\PP^2_{\C}$ has only one real structure (up to equivalence), namely the standard one, and it is maximal ($b_*(\PP^2(\C), \F2)=b_*(\PP^2(\R), \F2)=3$). We show that the maximality is inherited by the moduli spaces:
\begin{thm}
	\label{thm:ModuliP2}
	For integers $r>0, c_1, c_2$ satisfying \eqref{eqn:NumericalCondP2}, the moduli space $M(r, c_1, c_2)$ of stable sheaves on the projective plane equipped with its natural real structure is maximal.
\end{thm}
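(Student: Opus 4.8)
The plan is to reduce, via Proposition \ref{prop:MaximalCriterion}, to the equivariant formality of $M$, namely the surjectivity of the restriction map $H^*_G(M, \F2)\to H^*(M, \F2)$, and to verify it on an explicit set of ring generators, exactly as in the proofs of Theorems \ref{thm:VBAC} and \ref{thm:Higgs}. The input I would invoke is a generation theorem for the cohomology of $M(r,c_1,c_2)$ in the spirit of Atiyah--Bott's Theorem \ref{thm:AtiyahBott}: that $H^*(M,\Z)$ is torsion free and generated by the slant products $\pi_{M,*}\!\left(\pi_{\PP^2}^*(\alpha)\smile c_i(\mathcal{E})\right)$ of the Chern classes of a universal sheaf $\mathcal{E}$ on $\PP^2\times M$, where $\alpha$ ranges over a basis $\{1, h, h^2\}$ of $H^*(\PP^2,\Z)$, together with the Chern classes of the $K$-theoretic pushforward $\pi_{M,!}[\mathcal{E}]$. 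Such tautological generation for moduli of sheaves on $\PP^2$ is available in the literature on their cohomology and Chow rings, and since $H^*(M,\Z)$ is torsion free, these classes also generate $H^*(M,\F2)$.

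The crucial structural point, as in the curve cases, is that the universal object is \emph{real}. The numerical hypothesis \eqref{eqn:NumericalCondP2} is precisely the coprimality that annihilates the Brauer-theoretic obstruction to a universal family; as this mechanism is insensitive to the ground field, it produces a universal sheaf $\mathcal{E}$ already defined over $\R$ on $\PP^2_{\R}\times_{\R} M_{\R}$. Choosing a real locally free resolution on the smooth ambient variety, we obtain a class $[\mathcal{E}]\in \KR^0(\PP^2\times M)$ whose image under forgetting the involution is $[\mathcal{E}]\in \KU^0(\PP^2\times M)$; hence $\mathcal{E}$ carries equivariant Chern classes $c_i^G(\mathcal{E})\in H^{2i}_G(\PP^2\times M, \F2)$ restricting to $c_i(\mathcal{E})$, and the real pushforward $\pi_{M,!}$ on $\KR^0$ lifts the topological one on $\KU^0$.

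With these in place I would run the standard diagram chase. Since $\PP^2_{\R}$ carries its unique, maximal real structure, the restriction $H^*_G(\PP^2,\F2)\to H^*(\PP^2,\F2)$ is surjective. A generator of the form $\pi_{M,*}\!\left(\pi_{\PP^2}^*(\alpha)\smile c_i(\mathcal{E})\right)$ is the image of $\alpha$ under the cohomological correspondence attached to $c_i(\mathcal{E})$, which fits, via $c_i^G(\mathcal{E})$, into a commutative square with the equivariant correspondence and the vertical restriction maps. Surjectivity of the left-hand column then forces the generator into the image of $H^*_G(M,\F2)\to H^*(M,\F2)$; the pushforward generators are handled identically through the $\KR$-to-$\KU$ square, just as for the type-$(iii)$ classes in Theorem \ref{thm:VBAC}. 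Therefore the restriction map for $M$ is surjective, and $M$ is maximal by Proposition \ref{prop:MaximalCriterion}.

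The main obstacle lies in securing the two ingredients rather than in the formal argument. The first is a clean, citable statement that $H^*(M,\Z)$ is torsion free and tautologically generated; this is the genuine analog of Atiyah--Bott (Theorem \ref{thm:AtiyahBott}) and of Li--Qin (Theorem \ref{thm:LiQin}), and it is the place where the special geometry of $\PP^2$ enters. The second is the reality of $\mathcal{E}$: if one wishes to bypass the Brauer-obstruction argument, one can instead observe that $\mathcal{E}$ is real only up to twist by a line bundle pulled back from $M$ and work with the canonically real object $\mathcal{E}nd(\mathcal{E})$, at the cost of a more involved bookkeeping of generators. In any case the coprimality hypothesis \eqref{eqn:NumericalCondP2} is exactly what makes the universal object real, and this is the only point where it is used.
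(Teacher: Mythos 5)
Your proposal follows the paper's proof essentially verbatim: reduce to equivariant formality via Proposition \ref{prop:MaximalCriterion}, use the coprimality condition \eqref{eqn:NumericalCondP2} to get a universal sheaf defined over $\R$, and lift a system of tautological generators of $H^*(M,\F2)$ into $H^*_G(M,\F2)$. The one ingredient you flag as unsecured is supplied by Ellingsrud--Str{\o}mme, who prove that $H^*(M,\Z)$ is torsion free, concentrated in even degrees, and generated by the Chern classes $c_i(R^1p_*(\mathcal{E}(-j)))$ for $1\leq j\leq 3$; since these are Chern classes of real algebraic vector bundles living on $M$ itself, the paper's lifting step is even simpler than the one you outline---no correspondence or $\KR$-pushforward diagrams are needed, only the compatibility $c_i^G\mapsto c_i$ under restriction.
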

\begin{proof}
	By construction (using GIT for example), $M:=M(r, c_1, c_2)$ is a smooth projective variety defined over $\R$.
	Since $M$ is a fine moduli space, there is a universal sheaf $\mathcal{E}$ on $M\times \PP^2$, which is also defined over $\R$. 
	By Ellingsrud and Stromme \cite{EllingsrudStromme}, the cohomology ring $H^*(M, \Z)$ has no odd cohomology, is torsion-free and generated by the elements
	\begin{equation}
		\label{eqn:GeneratorsForP2}
		c_i(R^1p_*(\mathcal{E}(-j))),
	\end{equation}
	where $p:M\times S\to M$ is the natural projection, $1\leq j\leq 3$, $i\in \mathbb{N}$.
	Therefore $H^*(M, \F2)$ is also generated, as $\F2$-algebra, by (the images of) the elements in \eqref{eqn:GeneratorsForP2}. However, $R^1p_*(\mathcal{E}(-j))$ is a real algebraic vector bundle on $M$, hence its equivariant Chern class $c^G_i(R^1p_*(\mathcal{E}(-j)))\in H^*_G(M, \F2)$ is mapped to $c_i(R^1p_*(\mathcal{E}(-j)))$ via the restriction map
	$H^*_G(M, \F2)\to H^*(M, \F2)$. Therefore the restriction map is surjective and $M$ is maximal by Proposition \ref{prop:MaximalCriterion}.
\end{proof}

\section{Moduli spaces of sheaves on Poisson surfaces}
\label{sec:Poisson}
Let $S$ be a Poisson surface, that is, a smooth projective complex surface with an effective anti-canonical bundle: $H^0(S, -K_S)\neq 0$. Poisson surfaces are classified by Bartocci and Macr\`i \cite{BartocciMacri-PoissonSurface}: they are either symplectic (i.e.~K3 or abelian) surfaces, or ruled surfaces; see \cite[Theorem 1.1]{BartocciMacri-PoissonSurface} for the precise classification of ruled Poisson surfaces.

A Mukai vector on $S$ is an element $v\in \KU^0(S)$. We denote by $\rk(v)\in \Z$ its rank, and by $c_i(v)\in H^{2i}(S, \Z)$ its $i$-th Chern class for $i=1, 2$. Any coherent sheaf $E$ on $S$ gives rise to a class in the Grothendieck group $[E]\in \operatorname{K}^0(S)$, its image under the natural map $\operatorname{K}^0(S)\to \KU^0(S)$ is called the Mukai vector of $E$. A polarization $H$ is called \textit{$v$-generic}, if $H$-stability coincides with $H$-semi-stability for any coherent sheaves with Mukai vector $v$. Here and in the sequel,  (semi-)stability is in the sense of Gieseker and Maruyama (see \cite{HuybrechtsLehn}). 

Let $v$ be a primitive Mukai vector with $\rk(v)> 0$ and $c_1(v)\in \operatorname{NS}(S):=H^2(S, \Z)\cap H^{1,1}(S)$, and  $H$ a $v$-generic polarization, which always exists\footnote{Here we assumed $\rk(v)>0$ for simplicity. The case $\rk(v)=0$ can also be treated with some extra care. For example, in Markman \cite[P. 628, Condition 7]{Markman-IntegralGeneratorPoisson}, a further assumption is added when $S$ is a ruled Poisson surface and $\rk(v)=0$.}.  Let $M:=M_H(S, v)$ be the moduli space of $H$-stable sheaves on $S$ with Mukai vector $v$.

For a symplectic surface $S$, the moduli space $M$ is a connected smooth projective holomorphic symplectic variety of dimension $2n:=2-\chi(v, v)=(1-r)c_1^2+2rc_2-r^2 \chi(\mathcal{O}_S)+ 2$, and is deformation equivalent to the $n$-th punctual Hilbert scheme $S^{[n]}$ (hence hyper-K\"ahler if $S$ is a K3 surface). This is a theorem established by the works of Mukai \cite{Mukai84}, G\"ottsche--Huybrechts \cite{GottscheHuybrechts}, O'Grady \cite{OGrady97}, Yoshioka \cite{Yoshioka-Crelle99} etc; see \cite[Theorem 6.2.5]{HuybrechtsLehn}.

For a ruled Poisson surface $S$, the moduli space $M$, if non-empty, is a connected smooth projective variety of dimension $1-\chi(v, v)=(1-r)c_1^2+2rc_2-r^2 \chi(\mathcal{O}_S)+ 1$  with a Poisson structure. This is due to Bottacin \cite{Bottacin-Poisson} (based on Tyurin \cite{Tyurin-SymplecticStructure}).

\begin{thm}
	\label{thm:PoissonSurface}
	Let $(S, \sigma)$ be a Poisson surface with a real structure. Let $v\in \KU^0(S)$ be a primitive  Mukai vector with $\rk(v)>0$ and satisfying $c_1(v)\in \operatorname{NS}(S)$ and  $\sigma^*(c_1(v))=-c_1(v)$. Let $H$ be a real algebraic ample line bundle that is $v$-generic. Then $\sigma$ induces a natural real structure on $M:=M_H(S, v)$. If $S$ is K-maximal (Definition \ref{def:KMax}) in the sense that the natural maps $\KR^i(S)\to \KU^i(S)$ are surjective for $i=0, 1$, then $M$ is maximal.
\end{thm}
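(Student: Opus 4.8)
The plan is to mirror the strategy used in the proofs of Theorem~\ref{thm:VBAC}, Theorem~\ref{thm:Higgs}, Theorem~\ref{thm:HilbertPower} and Theorem~\ref{thm:ModuliP2}: reduce the maximality of $M$ to the surjectivity of the restriction map $H^*_G(M,\F2)\to H^*(M,\F2)$ via Proposition~\ref{prop:MaximalCriterion}, and prove the latter by exhibiting a set of generators of $H^*(M,\F2)$ that are all in the image of the restriction, using that the universal sheaf and its $K$-theoretic pushforwards define classes in $\KR$-theory. First I would address the real structure: verify that $\sigma_M(E):=\overline{\sigma^*E}$ is a well-defined anti-holomorphic involution on $M$. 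The key compatibility is that $\sigma_M$ preserves the Mukai vector $v$; this uses exactly the hypotheses $c_1(v)\in\operatorname{NS}(S)$ and $\sigma^*(c_1(v))=-c_1(v)$, together with the fact that $\overline{\sigma^*(-)}$ acts as $\id$ on $H^0\oplus H^4$ and introduces the sign $(-1)^i$ on $H^{2i}$ (just as in the proof of Theorem~\ref{thm:ModuliP2}), so that $c_1(\overline{\sigma^*E})=-\sigma^*c_1(E)=c_1(v)$ and similarly for the rank and second Chern class. The $v$-genericity of the \emph{real} ample $H$ ensures $H$-stability is $\sigma_M$-invariant and equals semistability, so $\sigma_M$ descends to the GIT quotient and $M$ is defined over $\R$.

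Next I would invoke the description of the cohomology ring. Markman~\cite{Markman-IntegralGeneratorPoisson} proves that, for a Poisson surface with the $K$-maximality-type input, $H^*(M,\Z)$ is generated by (higher) Chern classes of $K$-theoretic pushforwards of the universal sheaf twisted by classes pulled back from $S$; concretely, writing the K\"unneth decomposition $[\mathcal{E}]=\sum_i p_S^*(x_i)\otimes p_M^*(e_i)$ in $\KU^0(S\times M)$ with $\{x_i\}$ a basis of $\KU^*(S)$, the cohomology is generated by the (half-integral) Chern classes of the $e_i$. This is precisely the Higgs-bundle situation of Theorem~\ref{thm:GeneratorHiggs}, so the generators are of the same three types and the same lemma on $(1,2r-1)$-K\"unneth components applies. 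Since $H^*(M,\Z)$ is torsion free (again by Markman, as in the Higgs case), these also generate $H^*(M,\F2)$ by the universal coefficient theorem.

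The heart of the argument is then that each generator lifts. The universal sheaf $\mathcal{E}$ on $S\times M$ is defined over $\R$, hence carries a real structure and defines a class $[\mathcal{E}]\in\KR^0(S\times M)$ mapping to $[\mathcal{E}]\in\KU^0(S\times M)$. To realize the K\"unneth components $e_i$ in $\KR$-theory I would use the $K$-maximality hypothesis: the surjectivity of $\KR^i(S)\to\KU^i(S)$ for $i=0,1$ lets me lift the basis classes $x_i\in\KU^*(S)$ to real classes $\tilde x_i\in\KR^*(S)$, whence the $K$-theoretic pushforwards $p_{M,!}\bigl(p_S^*(\tilde x_i)\otimes[\mathcal{E}]\bigr)\in\KR^*(M)$ map onto the $e_i\in\KU^*(M)$. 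Applying the equivariant Chern class $c^G$ and using the commutative square relating $c^G$ (on $\KR$) to $c$ (on $\KU$) through the restriction map, exactly as in the diagrams in the proofs of Theorems~\ref{thm:VBAC} and~\ref{thm:Higgs}, shows that every Markman generator lies in the image of $H^*_G(M,\F2)\to H^*(M,\F2)$. Therefore this restriction is surjective and $M$ is maximal by Proposition~\ref{prop:MaximalCriterion}.

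I expect the main obstacle to be the interface between the odd $K$-theory $\KR^1$ and the half-integral (odd-degree) Chern classes. In the projective-plane case (Theorem~\ref{thm:ModuliP2}) $H^*(M)$ had no odd cohomology and only $\KR^0$-surjectivity was needed; here the $e_i$ for $i\le 2g'$ (the odd part of $\KU^*(S)$) live in $\KU^1(M)$ and contribute the type-(ii) generators in odd degree, which is precisely why the hypothesis demands $K$-maximality in \emph{both} degrees $i=0,1$. Making the lifting along $\KR^1\to\KU^1$ compatible with the equivariant higher Chern classes $c^G_{r-\frac12}$, and checking that the $\KR$-theoretic pushforward $p_{M,!}$ is genuinely the real (equivariant) Gysin map refining the topological one, is the delicate point; the cited K\"unneth-component lemma and Markman's explicit formulas should close this gap verbatim, as in the Higgs case.
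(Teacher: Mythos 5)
Your proposal follows essentially the same route as the paper: verify the real structure via the hypotheses on $v$, invoke Markman's theorem that $H^*(M,\Z)$ is torsion free and generated by the Chern classes of the K\"unneth components $e_i$ of the universal class, and lift the $e_i$ to $\KR^*(M)$ by pushing forward the real universal class against lifts of classes on $S$ supplied by K-maximality. The one step that does not work as literally written is the claim that $p_{M,!}\bigl(p_S^*(\tilde x_i)\otimes[\mathcal{E}]\bigr)$ ``maps onto $e_i$'': computing with the K\"unneth decomposition gives $\sum_j \chi(x_i, x_j)\, e_j$, a linear combination of the $e_j$, not $e_i$ itself. The paper repairs exactly this by using that the Euler pairing $\chi(F_1^\vee\otimes F_2)$ on $\KU^*(S)$ is perfect (Markman's Remark 18), choosing the dual basis $\{y_i\}$ with $\chi(x_i^\vee\otimes y_j)=\delta_{ij}$, lifting the $y_i$ (not the $x_i$) via K-maximality, and pushing forward $\tilde e\otimes p_2^*(\tilde y_i)$ to land precisely on $e_i$. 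Your argument can also be salvaged without the dual basis by observing that perfectness makes the matrix $\bigl(\chi(x_i,x_j)\bigr)$ invertible over $\Z$, so your classes integrally span the same subgroup as the $e_i$ and hence each $e_i$ still lies in the image of $\KR^*(M)\to\KU^*(M)$; but either way the perfectness of the pairing is an ingredient you must name, not elide with a ``whence.'' Your detour through the three-type generator description of Theorem \ref{thm:GeneratorHiggs} is unnecessary here: the paper works directly with the $c_j(e_i)$ for the Poisson case.
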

\begin{proof}
		Let $\mathcal{E}$ be a (quasi)-universal sheaf on $M\times S$, which defines a class $e\in \KU^0(M\times S)$; see \cite[Section 3]{Markman-IntegralGeneratorPoisson} for the detailed construction of $e$ in the absence of universal sheaf. By construction, $\mathcal{E}$ is a real sheaf, hence the class $e\in \KU^0(M\times S)$ is the image of a class $\tilde{e}\in \KR^0(M\times S)$.
		
		In the sequel, for simplicity, we denote by $\KU^*\coloneqq \KU^0\oplus \KU^1$ and $\KR^*\coloneqq \KR^0\oplus \KR^1$.
	Let $p_1, p_2$ be the projections from $M\times S$ to $M$ and $S$ respectively. Since $\KU^*(S)$ is torsion-free (for any Poisson surface $S$), we have the K\"unneth decomposition for topological K-theory:
	$$\KU^0(M\times S)\cong \KU^0(M)\otimes \KU^0(S)\oplus  \KU^1(M)\otimes \KU^1(S).$$
	Via the above decomposition, we write the following equality in $\KU^0(M\times S)$:
	\begin{equation}\label{eqn:Classe}
	e=\sum_i p_1^*e_i\otimes p_2^*x^\vee_i,
	\end{equation}
	where $\{x_i\}_i$ is a basis of $\KU^*(S)$, and $e_i\in \KU^*(M)$. 
	
	By Markman \cite[Theorem 1]{Markman-IntegralGeneratorPoisson}, the cohomology ring $H^*(M, \Z)$ is torsion-free and generated by Chern classes $\{c_j(e_i)\}_{i,j}$, where $j$ runs through $\mathbb{N}+\frac{1}{2}$ if $e_i\in \KU^1(M)$; see \cite[Definition 19]{Markman-IntegralGeneratorPoisson} for the definition of Chern classes of elements in odd K-theory.
	Therefore, $H^*(M, \F2)$ is also generated by $\{c_j(e_i)\}_{i,j}$. We only need to prove that every generator $c_j(e_i)\in H^{2j}(M, \F2)$ has a preimage in $H^{2j}_G(M, \F2)$.
	
	Since the pairing 
	\begin{align*}
	\chi\colon \KU^*(S)\otimes \KU^*(S)&\to \Z\\
	(F_1, F_2)&\mapsto \chi(F_1^\vee\otimes F_2).
	\end{align*}
	is perfect (see \cite[Remark 18]{Markman-IntegralGeneratorPoisson}), one can find a dual basis $\{y_i\}_i$  of $\KU^*(S)$, i.e. $\chi(x_i^\vee \otimes y_j)=\delta_{ij}$.
	Since $\KR^*(S)\to \KU^*(S)$ is surjective by assumption, we have that for each $i$, there exists $\tilde{y_i}\in \KR^*(S)$ whose image in $\KU^*(S)$ is $y_i$. 
	
	Consider the element 
	\[\tilde{e_i}:=p_{1,!}(\tilde{e}\otimes p_2^*(\tilde{y_i}))\in \KR^*(M).\]
	The image of $\tilde{e_i}$ under the map $\KR^*(M)\to \KU^*(M)$ is computed as follows using \eqref{eqn:Classe}: 
	$$p_{1,!}(e\otimes p_2^*(y_i))=\sum_j p_{1,!}(p_1^*e_j\otimes p_2^*(x^\vee_j\otimes y_i))=\sum_j\chi(x_j^\vee\otimes y_i)e_j=e_i.$$
	We obtain that, for any $i$, the class $\tilde{e_i}\in \KR^*(M)$ is mapped to $e_i\in \KU^*(M)$. 
	Therefore, the equivariant Chern class $c_j^G(\tilde{e_i})\in H_G^{2j}(M, \F2)$ is mapped to the Chern class $c_j(e_i)\in H^{2j}(M, \F2)$ via the restriction map. 
	 As $\{c_j(e_i)\}$ are generators, the restriction map $H^*_G(M, \F2)\to H^*(M, \F2)$ is surjective, and $M$ is maximal by Proposition \ref{prop:MaximalCriterion}.
\end{proof}

	To provide examples where Theorem \ref{thm:PoissonSurface} applies, let us give the following generalization of Theorem \ref{thm:ModuliP2}:
	
\begin{cor}[Rational  real Poisson surface]
	\label{cor:RationalPoisson}
	Let $S$ be a Poisson surface with a real structure $\sigma$ such that it is $\R$-\emph{rational}. Let  $v$ be a primitive Mukai vector satisfying $\rk(v)>0$ and  $\sigma^*(c_1(v))=-c_1(v)$. Let $H$ be a real algebraic ample line bundle that is $v$-generic. If $(S, \sigma)$ is maximal, then $M_H(S, v)$, equipped with the natural real structure,  is also maximal.
\end{cor}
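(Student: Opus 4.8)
The plan is to deduce the statement directly from Theorem \ref{thm:PoissonSurface} by checking that its hypotheses are all satisfied in the $\R$-rational setting. The given data --- a real Poisson surface $(S,\sigma)$, a primitive Mukai vector $v$ with $\rk(v)>0$ and $\sigma^*(c_1(v))=-c_1(v)$, and a $v$-generic real ample line bundle $H$ --- are precisely those appearing in the theorem, and they equip $M:=M_H(S,v)$ with the natural real structure referred to in the statement. Once the two remaining conditions of Theorem \ref{thm:PoissonSurface} (namely $c_1(v)\in\operatorname{NS}(S)$ and the K-maximality of $S$) are verified, the conclusion is immediate.

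First I would dispose of the condition $c_1(v)\in\operatorname{NS}(S)$, which turns out to be automatic. Since $S_{\C}$ is rational, we have $H^{2,0}(S)=0$, so the Lefschetz $(1,1)$-theorem (or simply the Hodge decomposition) gives $\operatorname{NS}(S)=H^2(S,\Z)$; hence \emph{every} integral second cohomology class, in particular $c_1(v)$, lies in the N\'eron--Severi group. Thus the only arithmetic constraint genuinely imposed on $v$ is the given compatibility $\sigma^*(c_1(v))=-c_1(v)$, and there is nothing further to check on the Mukai vector.

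The essential input is the K-maximality of $S$, that is, the surjectivity of $\KR^i(S)\to\KU^i(S)$ for $i=0,1$. This is exactly where the hypotheses that $S$ is $\R$-rational and maximal are used: by Lemma \ref{lemma:KMaxRationalSurface}, a maximal $\R$-rational real surface is K-maximal. Substituting this into Theorem \ref{thm:PoissonSurface} then yields the maximality of $M_H(S,v)$, completing the proof. I do not anticipate a real obstacle at this stage, because Lemma \ref{lemma:KMaxRationalSurface} already performs the substantive work: it reduces K-maximality, via the real minimal model program for surfaces and the invariance of K-maximality under blow-ups at real points, to the list of minimal maximal rational real surfaces ($\PP^2$, $\PP^1\times\PP^1$, and the Hirzebruch surfaces $\mathbb{F}_n$), for which $\KU^0$ is generated by real algebraic line bundles. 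The statement should therefore be read as a clean specialization of Theorem \ref{thm:PoissonSurface}, generalizing the projective-plane case of Theorem \ref{thm:ModuliP2}, with the only point meriting attention being the automatic nature of $c_1(v)\in\operatorname{NS}(S)$ noted above.
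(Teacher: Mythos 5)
Your proposal is correct and follows essentially the same route as the paper: the paper's proof likewise consists of invoking Theorem \ref{thm:PoissonSurface} and verifying the K-maximality hypothesis via Lemma \ref{lemma:KMaxRationalSurface}. Your additional observation that $c_1(v)\in\operatorname{NS}(S)$ is automatic because $H^{2,0}(S)=0$ for a rational surface is a sound (and welcome) explicit justification of a point the paper leaves implicit.
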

\begin{proof}
To apply Theorem \ref{thm:PoissonSurface}, we only need to check the $K$-maximality. This is verified in Lemma \ref{lemma:KMaxRationalSurface}.
\end{proof}

\section{Real motives, equivariant formality, and maximality}
\label{sec:Motive}
The notion of \textit{motive} was invented by Grothendieck around 1960 to unify various cohomology theories and to deal with problems involving algebraic cycles. It has become a standard and powerful tool nowadays throughout algebraic geometry. See \cite{AndreMotiveBook} for an introduction.  In this section, we would like to explain the relation between the maximality of real varieties and  their motives. Although not logically necessary for the results obtained in this paper, we highlight the following principle which underlies the whole paper:\\

\noindent\textbf{Principle:} The maximality of real varieties is a \textit{motivic} property called \textit{equivariant formality}, and new maximal varieties can be constructed as those \textit{motivated} by maximal ones. \\

For the precise meaning of the principle, see Definition \ref{def:MaximalMotive} and Corollary \ref{cor:MotivationMaximal}. We switch to the algebraic notation in this section: for  an algebraic  variety $X$ defined over $\R$, the complex analytic space is denoted by $X(\C):=X_{\C}^{an}$. In particular, $(X\times_{\R}Y)(\C)=X(\C)\times Y(\C)$.
\subsection{Motives}
Let us briefly recall some basic notions on motives; see \cite{AndreMotiveBook} for a detailed account.
\begin{defn}[Real motives] \label{def:Motives}
	Fix a commutative ring $F$ as the coefficient ring. In this paper,  $F=\Z$, $\Q$, or $\F2$, etc.\\
(i) The category of \textit{Chow motives} over the field $\R$ with coefficients in $F$, denoted by $\CHM(\R)_F$, is defined as follows:
	
- An object, called a Chow motive, is a triple $(X, p, n)$, where $n$ is an integer, $X$ is a smooth projective variety over $\R$ of dimension $d_X$, and  $p\in \CH^{d_X}(X\times_{\R} X)_F$ is an algebraic cycle (modulo rational equivalence) in $X\times_{\R} X$ with $F$-coefficients such that $p\circ p=p$, here $\circ$ denotes the composition of correspondences. 
 
- A morphism between two Chow motives $(X, p, n)$ and $(Y, q, m)$ is an element in the group 
 \[\Hom((X, p, n), (Y, q, m)):=q\circ \CH^{m-n+d_X}(X\times_{\R} Y)_F\circ p.\]
The composition of morphisms are given by composition of correspondences. 

- There is a natural (contravariant) functor of ``taking Chow motives":
\begin{align*}
\text{(Smooth projective $\R$-varieties, $\R$-morphisms)}^{\operatorname{op}} & \to\CHM(\R)_F\\
X&\mapsto  \h(X):=(X, \Delta_X, 0)\\
(f\colon Y\to X) & \mapsto \Gamma_f\in \CH^{d_X}(X\times_{\R} Y)_F
\end{align*}
where $\Delta_X\in \CH^{d_X}(X\times_{\R} X)_F$ is the class of the diagonal, and $\Gamma_f$ is the graph of $f$.  
  
The category $\CHM(\R)_F$ is an $F$-linear, idempotent complete, rigid  tensor category, with unit object $\mathds{1}=(\spec(\R), \Delta, 0)$, $(X, p, n)\otimes (Y, q, m):= (X\times_{\R} Y, p\times q, m+n)$ and $(X, p, n)^\vee:= (X, {}^tp, d_X-n)$, where ${}^tp$ is the transpose of $p$ (i.e. its image under the involution on $X\times_{\R} X$) swapping two factors. For an integer $n$, the Tate object is $\mathds{1}(n):=(\spec(\R), \Delta, n)$.

\noindent(ii) Similarly, by replacing the Chow groups by the groups of algebraic cycles modulo homological equivalence in the above definition, we obtain the category of \textit{homological} (or \textit{Grothendieck}) \textit{motives}, denoted by $\Mot(\R)_F$. The (contravariant) functor of taking homological motives is denoted by $X\mapsto M(X)$.

\noindent(iii) Since rational equivalence is finer than homological equivalence, we have a natural functor $\CHM(\R)_F\to \Mot(\R)_F$.
%\noindent (iv) Elements in the Grothendieck group of motives $K_0(\CHM(\R)_F)$, $K_0(\Mot(\R)_F)$ are called \textit{virtual motives}. 
\end{defn}

\subsection{Realizations}
It is easy to check that any (generalized) cohomological theory (like Chow group $\CH^*$, cohomology $H^*$, equivariant cohomology  $H_G^*$, K-theory, topological K-theory, etc.) that behaves well with respect to correspondences (i.e.~with respect to pullbacks, pushforwards, and intersection products), can be extended to appropriate categories of motives, called \textit{realizations}. Let us mention some examples. Let $\Mod_F$ (resp. $\Mod_F^{\Z}$) be the category of (resp. graded) $F$-modules.

\begin{enumerate}
	\item (Chow group). Define
	\begin{align*}
		\CH^*\colon \CHM(\R)_F \to& \Mod_F^{\Z}\\
		(X, p, n)&\mapsto \im(p_*\colon \CH^{*+n}(X)_F\to \CH^{*+n}(X)_F).
	\end{align*}
It follows from the definition that the Chow group of a Chow motive $M$ is just a $\Hom$ group in the category $\CHM(\R)_F$:
$$\CH^i(M)_F=\Hom(\mathds{1}(-i), M).$$
\item (Equivariant cohomology). Let $G=\operatorname{Gal}(\mathbb{C}/\mathbb{R})$. For any integers $i, j$, we have
\begin{align*}
H_G^i(-, F(j))\colon \Mot(\R)_F &\to \Mod_F\\
(X, p, n)&\mapsto \im(\cl_G(p)_*\colon H^{i+2n}_G(X(\C), F(j+n))\to H^{i+2n}_G(X(\C), F(j+n))),
\end{align*}
where $F(j)=\sqrt{-1}^j\Z$ as $G$-module.\\
Here we used the existence of the equivariant cycle class map: for any smooth projective real variety $W$, we have 
\[\cl_G\colon \CH^*(W)_F\to H^{2*}_G(W(\C), F(*)),\]
which is  compatible with the functorialities and cup-product of equivariant cohomology, thanks to Krasnov \cite{Krasnov-CharClass}.
Let us also mention that when $F$ is torsion, $H_G^i(-, F(j))$ is canonically identified with the \'etale cohomology $H_{\et}^i(-, F(j))$ (see \cite[Corollary 15.3.1]{Sheiderer-LNM-Real&EtaleCohom}), and the standard compatibility with \'etale cycle class map yields the \'etale realization functor. 

\item (Cohomology of complexification). For motives over $\C$, we have the realization of singular cohomology:
\begin{align*}
	H^*\colon \Mot(\C)_F&\to \Mod_F^{\Z}\\
(X, p, n)& \mapsto \im(\cl(p)_*\colon H^{*+2n}(X(\C), F)\to H^{*+2n}(X(\C), F)),
\end{align*}
where $\cl$ is the usual (complex) cycle class map. 
By composing with the complexification functor $-_{\C}\colon \Mot(\R)_F\to \Mot(\C)_F$, we get the following realization, still denoted by $H$:
\begin{align*}
	H^*\colon \Mot(\R)_F&\to \Mod_F^{\Z}\\
	M=(X, p, n)& \mapsto H^*(M_{\C}):= \im(\cl(p_{\C})_*\colon H^{*+2n}(X(\C), F)\to H^{*+2n}(X(\C), F)).
\end{align*}
In fact, when $F=\Z$ or $\Q$, we can further enhance the realization functor into a functor with target the category of (polarizable) Hodge structures, still denote by $H^*$:
\begin{equation*}
	H^*\colon \Mot(\R)_F\to \operatorname{HS}_F^{\Z}
\end{equation*}

\item (Cohomology of real loci). To each real variety $X$, one can associate the cohomology of its real locus $H^*(X(\R), \F2)$. We claim that this extends to a realization functor:
\begin{align*}
	H\!\R^*\colon \Mot(\R)_{\F2}&\to \Mod_{\F2}^{\Z}\\
	(X, p, n)& \mapsto \im(\cl_R(p)_*\colon H^{*+n}(X(\R), \F2)\to H^{*+n}(X(\R), \F2)).
\end{align*}
Here we used that for any smooth projective real variety $W$, we have the Borel--Haefliger real cycle class map  \cite{BorelHaefliger}:
\[\cl_R\colon \CH^*(W)\to H^*(W(\R), \F2),\]
that is compatible with  pullbacks, pushforwards and the intersection product. The latter fact can be justified as follows: thanks to Krasnov \cite{Krasnov-94}, the Borel--Haefliger cycle class map is the composition of the equivariant cycle class map $\cl_G$ with the map of restricting to the real locus constructed in \cite{Krasnov-94}: 
$$H^{2*}_G(W, \F2)\to H^*(W(\R), \F2).$$ The compatibility with respect to the pullbacks and the intersection product is easy to check while the compatibility with respect to  pushforwards can be found in \cite[Theorem 1.21 and Proposition 1.22]{BenoistWittenberg-RealIHC-1}.

\item We have various natural transformations between these realization functors on motives. For example, for any Chow motive $M$, we have
\begin{itemize}
\item the restriction map: $H^i_G(M, F(j))\to H^i(M_{\C}, F)$;
\item the equivariant cycle class map $\cl_G\colon \CH^*(M)\to H^{2*}_G(M, \Z(*))$;
\item Krasnov's map of restricting to real loci: $H^{2*}_G(M, \F2)\to H\!\R^*(M, \F2)$;
\item  composing the previous two morphisms, we get Borel--Haefliger's real cycle class map for real motives $\cl_R\colon \CH^*(M)\to H\!\R^*(M, \F2)$;
\end{itemize}
%\item All the above realizations descends to virtual motives to give maps like $$H\!\R^* \colon K_0(\Mot(\R)_{\F2})\to K_0(\Mod_{\F2}^{\Z}).$$

\end{enumerate}

We can now generalize the notion of maximality as follows, in view of Proposition \ref{prop:MaximalCriterion}. 
\begin{defn}[Equivariantly formal real motives]\label{def:MaximalMotive}
	A homological motive $M\in \Mot(\R)_{\F2}$ is called \textit{equivariantly formal}, if the restriction morphism 
	$H^*_G(M, \F2)\to H^*(M_{\C}, \F2)$ is surjective. 
\end{defn}

By Proposition \ref{prop:MaximalCriterion}, a smooth projective real variety $X$ is maximal if and only if its homological motive  $M(X)\in \Mot(\R)_{\F2}$ is equivariantly formal. 
Of course, we have the following more naive generalization of maximality (and Hodge expressivity) to motives.
\begin{defn}[Maximal and Hodge expressive motives]
	A homological motive $M\in \Mot(\R)_{\F2}$ is called \textit{maximal}, if 
	$$\dim H\!\R^*(M, \F2)= \dim H^*(M_{\C}, \F2).$$
Similarly, $M\in \Mot(\R)$ is called \textit{Hodge expressive}, if $H^*(M, \Z)$ is torsion-free and for any $p\in \Z$,
	$$\dim H\!\R^p(M, \F2)=\sum_{q} \dim H^{p,q}(M),$$ where $H^{p,q}(M)$ is the $(p,q)$-component of the Hodge structure $H^*(M_{\C}, \Q)$.
\end{defn}

\begin{rmk}
	The notions of maximality and Hodge expressivity for real motives defined above do not seem very useful, since it is a priori not inherited by submotives;
	see Remark \ref{rmk:SummandHE}.
\end{rmk}

\subsection{Inheriting maximality by motivation}
Recall that the category $\Mot(\R)_{\F2}$ is a tensor category. 
We first check that the standard tensor operations in $\Mot(\R)_{\F2}$ preserve equivariant formality. 

It is clear from Definition \ref{def:MaximalMotive} that the direct sum of some equivariantly formal motives is equivariantly formal,  and a direct summand of an equivariantly formal motive is equivariantly formal, and one can check directly (using the maximality of $\PP^n_{\R}$ for example) that the Tate motives $\mathds{1}(n)$ are equivariantly formal for all $n\in \Z$. 

Next, recall that the tensor product of motives is given essentially by the fiber product (see Definition \ref{def:Motives}). Similarly to Lemma \ref{lemma:Product}, equivariant formality is preserved by tensor product of motives.
\begin{lemma}[Tensor product]
	\label{lemma:TensorMaxMotive}
If $M_1, \cdots, M_n\in \Mot(\R)_{\F2}$ are equivariantly formal real motives, then $M_1\otimes \cdots\otimes M_n$ is also equivariantly formal. In particular, Tate twists of equivariantly formal real motives are equivariantly formal.
\end{lemma}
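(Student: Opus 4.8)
The plan is to transpose the proof of Lemma~\ref{lemma:Product} to the motivic setting, replacing the K\"unneth formula by the compatibility of the external product with the two realization functors. By associativity of the tensor product, an induction reduces us to the case $n=2$; write $M_i=(X_i,p_i,n_i)$ and $M:=M_1\otimes M_2=(X_1\times_{\R}X_2,\,p_1\times p_2,\,n_1+n_2)$.

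First I would record the two K\"unneth inputs. For the complex realization, field coefficients give a clean isomorphism $H^*(M_{\C},\F2)\cong H^*(M_{1,\C},\F2)\otimes_{\F2}H^*(M_{2,\C},\F2)$, so that $H^*(M_{\C},\F2)$ is spanned as an $\F2$-vector space by the external products $\alpha\times\beta$ with $\alpha\in H^*(M_{1,\C},\F2)$ and $\beta\in H^*(M_{2,\C},\F2)$. On the equivariant side, the external cup product $a\times b:=\pr_1^*a\smile\pr_2^*b$ defines an $\F2$-bilinear pairing $H^*_G(M_1,\F2)\otimes H^*_G(M_2,\F2)\to H^*_G(M,\F2)$: here one uses that cross products commute with the action of the idempotent correspondences, so that the product of a class in $\im(\cl_G(p_1)_*)$ with a class in $\im(\cl_G(p_2)_*)$ lands in $\im\big(\cl_G(p_1\times p_2)_*\big)=H^*_G(M,\F2)$.

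The core of the argument is the commutativity of the square
\begin{equation*}
\xymatrix{
H^*_G(M_1,\F2)\otimes H^*_G(M_2,\F2)\ar[r]^-{\times}\ar[d]& H^*_G(M,\F2)\ar[d]\\
H^*(M_{1,\C},\F2)\otimes H^*(M_{2,\C},\F2)\ar[r]^-{\times}& H^*(M_{\C},\F2),
}
\end{equation*}
whose horizontal arrows are the external products and whose vertical arrows are the restriction maps to the complex realization. This commutes because the restriction map is a ring homomorphism compatible with pullback along the projections, i.e.\ $\operatorname{res}(\pr_1^*\tilde\alpha\smile\pr_2^*\tilde\beta)=\pr_1^*\operatorname{res}(\tilde\alpha)\smile\pr_2^*\operatorname{res}(\tilde\beta)$.

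With these inputs the conclusion is immediate: given a generator $\alpha\times\beta$ of $H^*(M_{\C},\F2)$, the equivariant formality of $M_1$ and $M_2$ furnishes lifts $\tilde\alpha\in H^*_G(M_1,\F2)$ and $\tilde\beta\in H^*_G(M_2,\F2)$ of $\alpha$ and $\beta$, and the square shows that $\tilde\alpha\times\tilde\beta\in H^*_G(M,\F2)$ restricts to $\alpha\times\beta$. As such products span $H^*(M_{\C},\F2)$, the restriction map $H^*_G(M,\F2)\to H^*(M_{\C},\F2)$ is surjective, i.e.\ $M$ is equivariantly formal. The assertion on Tate twists follows by taking $M_2=\mathds{1}(k)$, which is equivariantly formal as already observed. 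I expect the only delicate point to be the two compatibilities packaged into the square above — that the external product descends to the motivic summands and intertwines the equivariant and complex restrictions; both are standard consequences of the fact that the realizations are built from cycle class maps compatible with pullbacks and cup products, but they are what make the monoidal bookkeeping work. Everything else is a word-for-word transcription of Lemma~\ref{lemma:Product}.
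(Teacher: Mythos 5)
Your proposal is correct and follows essentially the same route as the paper: reduce to $n=2$ by induction, use the K\"unneth isomorphism for the complex realization with $\F2$-coefficients, and chase lifts through the commutative square comparing the external products in equivariant and ordinary cohomology (the paper draws the transposed square, with the exterior product $H^*_G(M_1)\otimes H^*_G(M_2)\to H^*_G(M_1\otimes M_2)$ described explicitly as the $G\times G$-exterior product followed by restriction to the diagonal copy of $G$, which is the same map as your $\pr_1^*(-)\smile\pr_2^*(-)$). The compatibilities you flag as the delicate points are exactly the ones the paper invokes, so nothing is missing.
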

\begin{proof}
By induction, it is enough to show the case where $n=2$. Consider the following commutative diagram, where we omit the coefficient $\F2$:
\begin{equation}
\xymatrix{
H^*_G(M_1)\otimes H^*_G(M_2) \ar[d]\ar@{->>}[r]& H^*({M_1}_{\C})\otimes H^*({M_2}_{\C})\ar[d]^{\cong}\\
H^*_{G}(M_1\otimes M_2) \ar[r]& H^*({M_1}_{\C}\otimes {M_2}_{\C}),
}
\end{equation}
where the horizontal arrows are restriction maps, the right vertical arrow is given by the exterior cup product of singular cohomology, the left vertical arrow is the composition of the exterior cup product for equivariant cohomology followed by the restriction map from $G\times G$ to $\Delta_G$:
$$H^*_G(M_1)\otimes H^*_G(M_2)\to H^*_{G\times G}(M_1\otimes M_2) \to H^*_{G}(M_1\otimes M_2),$$ (or equivalently, since we are using $\F2$-coefficients, we can identify the equivariant realization with the \'etale realization, the left arrow is given by the exterior cup product for \'etale cohomology). 

Since the right vertical arrow is an isomorphism by the K\"unneth formula and the top arrow is surjective by assumption, we can conclude that the bottom arrow is also surjective. 
\end{proof}

%\begin{lemma}
%	If $M\in \Mot(\R)_{\F2}$ is a maximal real motive, then $M^\vee$ is also maximal.
%\end{lemma}
%\begin{proof}
%	
%\end{proof}

Fix a category of motives $\mathcal{C} =\CHM(\R)_F$ or $\Mot(\R)_F$. For a collection of (Chow or homological) motives $\{M_i\}_{i\in I}$, we denote by $\langle M_i; i\in I\rangle^{\otimes}$ the thick tensor subcategory generated by the $M_i$'s, that is, the smallest $F$-linear subcategory of $\mathcal{C}$  containing all $M_i$'s and closed under isomorphisms, direct sums, tensor products, direct summands, and Tate twists. By Lemma \ref{lemma:TensorMaxMotive}, if all $M_i$ are equivariantly formal motives, then every object in the category  $\langle M_i; i\in I\rangle^{\otimes}_{\Mot(\R)_{\F2}}$ is equivariantly formal. 

\begin{defn}[Motivation]
	A real variety $Y$ is called \textit{motivated} by a collection of smooth projective real varieties $\{X_i\}_{i\in I}$, if its homological motive with $\F2$-coefficients $M(Y)$ lies in the subcategory
	$\langle M(X_i); i\in I\rangle^{\otimes}_{\Mot(\R)_{\F2}}$.
\end{defn}

The main point of this section is the following observation:
\begin{cor}\label{cor:MotivationMaximal}
	 A real variety $Y$ motivated by a collection of maximal real varieties $\{X_i\}_{i\in I}$  is maximal. 
\end{cor}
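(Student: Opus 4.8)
The plan is to deduce Corollary \ref{cor:MotivationMaximal} directly from the discussion immediately preceding it, using Proposition \ref{prop:MaximalCriterion} to translate maximality into equivariant formality of the homological motive. The corollary is essentially a formal consequence of three facts already assembled: (1) by Proposition \ref{prop:MaximalCriterion}, a smooth projective real variety $X_i$ is maximal if and only if its homological motive $M(X_i)\in \Mot(\R)_{\F2}$ is equivariantly formal (Definition \ref{def:MaximalMotive}); (2) by Lemma \ref{lemma:TensorMaxMotive} together with the observations preceding it, the class of equivariantly formal motives is closed under direct sums, direct summands, Tate twists, and tensor products; and (3) the real variety $Y$ is maximal if and only if $M(Y)$ is equivariantly formal, again by Proposition \ref{prop:MaximalCriterion}.

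First I would translate the hypothesis: since each $X_i$ is maximal, Proposition \ref{prop:MaximalCriterion} gives that each $M(X_i)$ is equivariantly formal. Next I would invoke the closure properties. The subcategory $\langle M(X_i); i\in I\rangle^{\otimes}_{\Mot(\R)_{\F2}}$ is by definition the smallest $\F2$-linear subcategory containing all $M(X_i)$ and closed under isomorphisms, direct sums, tensor products, direct summands, and Tate twists. The observations just before Lemma \ref{lemma:TensorMaxMotive} show that equivariantly formal motives are closed under direct sums, direct summands, and Tate twists (and that the Tate objects $\mathds{1}(n)$ themselves are equivariantly formal), while Lemma \ref{lemma:TensorMaxMotive} supplies closure under tensor products. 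Consequently, the full subcategory of equivariantly formal motives contains all the generators $M(X_i)$ and is closed under all the operations defining $\langle M(X_i); i\in I\rangle^{\otimes}$; hence it contains the entire generated subcategory.

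Finally, since $Y$ is motivated by $\{X_i\}_{i\in I}$, its motive $M(Y)$ lies in $\langle M(X_i); i\in I\rangle^{\otimes}_{\Mot(\R)_{\F2}}$, which by the previous step consists entirely of equivariantly formal motives. Therefore $M(Y)$ is equivariantly formal, and Proposition \ref{prop:MaximalCriterion} yields that $Y$ is maximal.

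There is no serious obstacle here: the content has been front-loaded into Proposition \ref{prop:MaximalCriterion} and Lemma \ref{lemma:TensorMaxMotive}, and the corollary is a clean assembly. The only point requiring minor care is to confirm that every one of the generating operations of the thick tensor subcategory (isomorphism, direct sum, direct summand, tensor product, Tate twist) individually preserves equivariant formality, so that the class of equivariantly formal motives is genuinely a subcategory closed under all of them; this is exactly what the paragraph preceding Lemma \ref{lemma:TensorMaxMotive} and the lemma itself establish, so the verification is routine.
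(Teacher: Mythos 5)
Your proposal is correct and follows essentially the same route as the paper's own proof: translate maximality into equivariant formality via Proposition \ref{prop:MaximalCriterion}, observe that equivariant formality is closed under all the operations generating $\langle M(X_i); i\in I\rangle^{\otimes}$ by Lemma \ref{lemma:TensorMaxMotive} and the remarks preceding it, and conclude. Your write-up merely makes explicit the closure-under-operations argument that the paper compresses into one sentence.
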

\begin{proof}
	It is almost evident:  if $Y$ is motivated by $\{X_i\}_{i\in I}$, that is, in the category $\Mot(\R)_{\F2}$, 
	\[M(Y)\in \langle M(X_i); i\in I\rangle^{\otimes }.\]
	Since $M(X_i)$ is equivariantly formal for any $i$, Lemma \ref{lemma:TensorMaxMotive} (and the remarks preceding it) implies that $M(Y)$ is equivariantly formal. Hence $Y$ is maximal by Proposition \ref{prop:MaximalCriterion}.
\end{proof}

\begin{rmk}[Hodge expressivity]
	\label{rmk:SummandHE}
	Maximality, as well as its strengthening Hodge expressivity (see \cite{BrugalleSchaffhauser})
are notions in terms of realizations (Hodge numbers, Betti numbers of real loci etc.), it is clear that the tensor product of two maximal (resp.~Hodge expressive) motives is again maximal  (resp.~Hodge expressive). However, it is unclear if a submotive of a maximal or Hodge expressive motive inherits the same property. Hence, a priori, we do not have the Hodge expressive analogue of Corollary \ref{cor:MotivationMaximal}.
\end{rmk}

\subsection{Applications}
\label{subsec:Applications}
Despite the formal nature of Corollary \ref{cor:MotivationMaximal},
many results obtained in the previous sections on various constructions of maximal real algebraic varieties can be viewed as instances of Corollary \ref{cor:MotivationMaximal}. Let us list them again by providing either an alternative motivic proof using Corollary \ref{cor:MotivationMaximal}, or a motivic explanation of the proof, or a conjectural motivic strengthening, in the light of Corollary \ref{cor:MotivationMaximal}. Notation is retained from the statements in the previous sections, and in the motivic results, the base field $\R$ can often be replaced by any field.
\begin{enumerate}
	\item (Projective bundle). Proposition \ref{prop:ProjBun} can be proved directly by using the projective bundle formula for motives, which holds even integrally for Chow motives (\cite[\S 4.3.2]{AndreMotiveBook}):
	$$\h(\PP_X(E))\simeq \h(X)\oplus \h(X)(-1)\oplus \cdots\oplus \h(X)(-r) \quad \quad \text{ in } \CHM(\R).$$
	Similarly, the claim in Remark \ref{rmk:FlagBundles} on flag bundles follows from the flag bundle formula for motives:
	$$\h(\operatorname{Fl}_X(E))\simeq \h(X)\otimes \h(\operatorname{Fl(\mathbb{A}^{r+1})}).$$
	\item (Blow-up). Proposition \ref{prop:Blowup} can be seen directly by using the blow-up formula for motives, which also holds integrally for Chow motives (\cite[\S 4.3.2]{AndreMotiveBook}):
	$$\h(\Bl_YX)\simeq \h(X)\oplus \h(Y)(-1)\oplus \cdots\oplus \h(Y)(-(c-1)) \quad \quad \text{ in } \CHM(\R).$$
	\item (Compactification of configuration space). Proposition \ref{prop:Configuration} can be deduced from the fact that the Fulton--MacPherson compactification of the configuration space $X[n]$ is motivated by $X$ (even integrally for Chow motives). 
	\item (Surjection of odd degree). The proof of Proposition \ref{prop:SurjectionOddDegree} actually shows that for a generically finite surjection of odd degree $Y\to X$,  $X$ is motivated (even for Chow motives with $\F2$-coefficients) by $Y$.
	\item (Moduli of vector bundles on curves). For the moduli space $M_C(n,d)$ in Theorem \ref{thm:VBAC}, the main ingredient in our proof is Atiyah--Bott's Theorem \ref{thm:AtiyahBott}, which suggests that the integral Chow motive of $M_C(n,d)$ might be motivated by $C$ and its symmetric powers. It was known to hold with rational coefficients \cite[Theorem 4.5]{dB_motive_moduli_vb}, \cite{Beauville_diag}, \cite[Remark 2.2]{Bulles}, \cite[Proposition 4.1]{FHPL-rank2}. In the up-coming \cite{HoskinsPepin-IntegralMotVBAC}, it is shown that the \textit{integral Chow motive of $M_C(n,d)$ is motivated by the symmetric powers of $C$}. Using Franz's theorem \ref{thm:Franz}, we get yet another alternative proof of Theorem \ref{thm:VBAC}. 
	\item (Moduli of parabolic bundles). Using the language of motives, Corollary \ref{cor:Parabolic} can be proved by induction and by saying that \cite[Corollary 5.16 and Corollary 5.19]{FHPL-rank2} imply that $M_{C, D}^{\alpha}(n, d, \underline{1})$ is motivated by $M_C(n, d)$ and all $M_{C, D}^{\alpha}(n', d', \underline{1})$ with $(n', d', \underline{1})$ smaller invariants. %which are the motivic consequences of the results in \cite{BH}, \cite{BY_rationality}, \cite{Thaddeus_VGIT}.
	\item (Moduli of Higgs bundles on curves). The main ingredient in the proof of  Theorem \ref{thm:Higgs} for the maximality of the moduli space $H_C(n, d)$ is Theorem \ref{thm:GeneratorHiggs} (a variant Markman's result \cite{Markman-IntegralGeneratorPoisson}). One can expect that $H_C(n, d)$ is motivated by all symmetric powers of $C$. This is only established with rational coefficients in \cite[Theorem 4.1]{HoskinsPepin-Higgs}.
	\item (Hilbert square and cube). In Theorem \ref{thm:HilbertSquareCube}, we actually showed that $X^{[2,3]}$ and $X^{[3]}$ are motivated (for Chow motives with $\F2$-coefficients) by $X^{[2]}$ and $X$.  
	\item (Cubic threefolds). Theorem \ref{thm:Cubic3folds} can be proven directly by using the fact that for a cubic threefolds $X$, its Hilbert square $X^{[2]}$ is motivated by $X$ and its Fano surface of lines $F(X)$, thanks to \cite[Corollary 18]{BelmansFuRaedschelders}.
	\item (Hilbert schemes of surfaces). For $S^{[n]}$, the key ingredient towards Theorem \ref{thm:HilbertPower} is Li--Qin's Theorem \ref{thm:LiQin}, which leads to the challenge of understanding the motive of $S^{[n]}$ in terms of the motive of $S$. The author has no idea for integral coefficients. But with rational coefficients, de Cataldo--Migliorini \cite{deCataldoMigliorini-Hilbert} showed that $S^{[n]}$ is motivated by $S$.
	\item (Moduli of sheaves on $\PP^2$).  Ellingsrud--Str{\o}mme \cite{EllingsrudStromme} actually shows that the Chow motive of $M(r, c_1, c_2)$ is a direct sum of Tate motives in $\CHM(\R)$, from which Theorem \ref{thm:ModuliP2} can be immediately deduced, by Corollary \ref{cor:MotivationMaximal}.
	\item (Moduli of sheaves on Poisson surfaces). In view of the result of Markman \cite{Markman-IntegralGeneratorPoisson}, we would like to make the following conjecture, with the hope of having more cases than in Theorem \ref{thm:PoissonSurface}. 
	\begin{conj}
		The integral Chow motive of  $M_H(S, v)$, the moduli space of stable sheaves on a Poisson surface $S$ with primitive Mukai vector $v$ and $v$-generic stability condition $H$, is in the tensor category generated by the motives of Hilbert schemes $\{S^{[n]}\}_n$.
	\end{conj}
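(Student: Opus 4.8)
The plan is to upgrade the cohomological argument behind Theorem \ref{thm:PoissonSurface} to a statement about integral Chow motives. Writing $\mathcal{T}:=\langle \h(S^{[m]});\, m\geq 0\rangle^{\otimes}_{\CHM(\R)_{\Z}}$ for the thick tensor subcategory generated by the Hilbert schemes, the conjecture asserts $\h(M)\in\mathcal{T}$, and since $\mathcal{T}$ is closed under direct summands it suffices to exhibit $\h(M)$ as a direct summand of a finite sum of Tate twists of tensor products of the $\h(S^{[m]})$. Concretely, I would aim to construct a pair of morphisms $\psi\colon \h(M)\to N$ and $\phi\colon N\to \h(M)$ in $\CHM(\R)_{\Z}$, with $N=\bigoplus_k \h(S^{[m_k]})(j_k)$, built from explicit algebraic correspondences, and to prove the single integral identity $\phi\circ\psi=\id_{\h(M)}$. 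This splits $\h(M)$ off $N$ and places it in $\mathcal{T}$; by Corollary \ref{cor:MotivationMaximal} such a decomposition would in particular reprove Theorem \ref{thm:PoissonSurface} and extend it to every real Poisson surface $(S,\sigma)$ whose Hilbert schemes are all maximal.

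The natural source of correspondences is the universal sheaf. Following Markman \cite{Markman-IntegralGeneratorPoisson}, let $e\in \mathrm{K}^0(M\times_{\R}S)$ be the class of the (quasi-)universal sheaf, so that its Chern character $\ch(e)\in\CH^*(M\times_{\R}S)_{\Q}$ defines an algebraic correspondence over $\R$. The Künneth components $e_i$ of $e$ with respect to a fixed generating set for the motives of the $S^{[m]}$, together with their Chern classes, generate $H^*(M,\Z)$ by Markman's theorem; the task is to promote this cohomological generation to a motivic decomposition. First I would fix a dual pair of generating families at the level of Chow groups of products of Hilbert schemes, exactly as the dual bases $\{x_i\},\{y_i\}$ were used in the proof of Theorem \ref{thm:PoissonSurface}, and apply Grothendieck--Riemann--Roch to realize each tautological generator $c_j(e_i)$ as the class cut out by an explicit algebraic cycle on a correspondence relating $M$ and some $S^{[m]}$. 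Assembling these cycles and their transposes produces the candidate maps $\psi$ and $\phi$; the content is then to verify that their composite is the identity on $\h(M)$ integrally.

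A complementary route is available when $S$ is symplectic, and rests on the observation that $S^{[n]}$ is itself a moduli space of sheaves, namely $M_H(S,(1,0,-n))$, so that the conjecture predicts that varying the Mukai vector keeps one inside $\mathcal{T}$. Here one would invoke Bridgeland stability and Bayer--Macr\`i wall-crossing to relate $M_H(S,v)$, through a finite chain of birational maps that are Mukai flops along smooth real flopping centres, to a Hilbert scheme of (a Fourier--Mukai partner of) $S$; by Remark \ref{rmk:FlipFlop}, or equivalently the integral motivic decomposition of such a flop, each step preserves membership in the tensor category generated by the relevant Hilbert schemes. Turning this into a proof requires that the entire chain, including the derived-equivalence step identifying the endpoint with $(S')^{[n]}$, be realized by algebraic correspondences defined over $\R$ and compatible with the real structures, and that the flopping centres themselves be controlled inside $\mathcal{T}$.

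The hard part will be the passage from cohomological to \emph{integral} motivic generation. With $\Q$-coefficients the statement is essentially within reach: de Cataldo--Migliorini \cite{deCataldoMigliorini-Hilbert} give $\h(S^{[n]})_{\Q}\in\langle \h(S)_{\Q}\rangle^{\otimes}$, and Markman's generation combined with the conservativity of the Betti realization on the relevant motives of abelian type should force the decomposition of the diagonal after inverting all torsion. Integrally there is no such conservativity, so the idempotents must be produced by hand and one must control $2$-torsion (and higher torsion) in $\CH^*(M\times_{\R}M)_{\Z}$; this is exactly the difficulty overcome, in the curve case, by the forthcoming work of Hoskins--Pepin Lehalleur \cite{HoskinsPepin-IntegralMotVBAC}, whose inductive Harder--Narasimhan and Bia\l ynicki--Birula machinery has no evident surface analogue. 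A second, structurally different obstacle is the ruled (non-symplectic) case: there $M$ may carry odd cohomology and none of the deformation- or flop-theoretic tools of the symplectic case apply, so one is thrown back entirely onto the correspondence-theoretic argument of the second paragraph, where establishing $\phi\circ\psi=\id_{\h(M)}$ over $\Z$ is precisely the point that remains open.
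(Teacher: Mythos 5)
The statement you are addressing is stated in the paper as a \emph{conjecture}, and the paper offers no proof of it: the only thing asserted there is that the rational-coefficient analogue is known, by B\"ulles \cite{Bulles}. Your text is accordingly not a proof but a research program, and you say so yourself --- the last sentence of your second paragraph identifies the verification of $\phi\circ\psi=\id_{\h(M)}$ integrally as ``the content,'' and your final paragraph concedes that this is ``precisely the point that remains open.'' That point is the entire conjecture. Nothing in the proposal constructs the idempotent, controls the torsion in $\CH^*(M\times_{\R}M)$, or produces the integral splitting; so there is a genuine gap, namely the whole of the argument. Your first two paragraphs are a reasonable and accurate description of the strategy one would naturally try (lift Markman's generators to correspondences between $M$ and the $S^{[m]}$, assemble them into $\psi$ and $\phi$, and try to split the diagonal), and they correctly locate the difficulty, but describing the strategy and flagging its hard step is not the same as carrying it out.

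Two further cautions on the routes you sketch. First, the wall-crossing route in the symplectic case overstates what Bayer--Macr\`i gives you: $M_H(S,v)$ is in general only \emph{deformation equivalent} to $S^{[n]}$, not connected to it by a chain of Mukai flops, and even when it is birational to a Hilbert scheme of a Fourier--Mukai partner $S'$, the generators $\{(S')^{[n]}\}_n$ are not the generators $\{S^{[n]}\}_n$ of the conjecture; identifying the two tensor categories integrally is itself an open derived-invariance problem. Deformation equivalence preserves no motivic information, so this route does not reduce the conjecture to Remark \ref{rmk:FlipFlop}. Second, your appeal to ``conservativity of the Betti realization on motives of abelian type'' in the rational case is not how the known result goes and is not needed: B\"ulles's argument (following Markman) splits $\h(M)_{\Q}$ off a sum of twists of motives of products of Hilbert schemes directly by a correspondence computation, which is exactly the rational shadow of your $\phi\circ\psi=\id$ identity. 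The honest summary is that your proposal reproduces the motivation for the conjecture rather than a proof of it.
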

This conjecture is known with rational coefficients, thanks to B\"ulles \cite{Bulles}.
\end{enumerate}

%Second, in Section \ref{sec:Cubic}, we give several interesting geometric examples of maximal varieties constructed from maximal real cubic hypersurfaces.

%\subsection{Moduli spaces of vector bundles with fixed determinant}
%Let $C$ be a smooth projective $\R$-curve and $n>0$, $d$ two coprime  integers. 
%Sending an algebraic vector bundle $E$ to its determinant $\det(E)$ defines a morphism of $\R$-varieties
%\[\det: M(n, d)\to J^d(C),\]
%which is a fibration. The fiber over the class of a line bundle $[\mathcal{L}]\in J^d(C)$ is the moduli space of stable vector bundles of rank $n$ and determinant $\mathcal{L}$, usually denoted by $M_C(n, \mathcal{L})$. If $\mathcal{L}$ is real, then $M_C(n, \mathcal{L})$ admits a natural real structure. We point out an alternative way to deduce the following result of Brugall\'e--Schaffhauser \cite[Theorem 1.2]{BrugalleSchaffhauser}.
%\begin{thm}
%	\label{thm:VBACFixedDet}
%Notation is as above. Let $\mathcal{L}$ be a $\R$-line bundle on $C$. If $C$ is maximal, so is $M_C(n, \mathcal{L})$. 
%\end{thm}
%\begin{proof}
%	By Hoskins and Pepin Lehalleur \cite{HoskinsPepin-IntegralMotVBAC}, $M_C(n,\mathcal{L})$ is shown to be motivated (in fact even for integral Chow motives) by symmetric powers of $C$. We can conclude by Franz's Theorem \ref{thm:Franz} and Corollary \ref{cor:MotivationMaximal}.
%\end{proof}
To further illustrate the power of this motivic point of view, we provide some more applications. 
Firstly,  we have the following complement to Theorem \ref{thm:HilbertPower}.

\begin{thm}[Hilbert schemes of surfaces with Tate motives]
	\label{thm:SurfaceTate}
	Let $S$ be a smooth projective real surface. If its integral Chow motive $\h(S)\in \CHM(\R)$ is a direct sum of Tate motives, then the Hilbert scheme $S^{[n]}$ is maximal for any $n\geq 1$.
\end{thm}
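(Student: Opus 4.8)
The plan is to deduce the statement from Theorem \ref{thm:HilbertPower} rather than from Corollary \ref{cor:MotivationMaximal} directly. The naive motivic route would be to argue that $\h(S^{[n]})$ is again a direct sum of Tate motives, hence equivariantly formal; but the only available motivic decomposition of $\h(S^{[n]})$ in terms of $\h(S)$ (de Cataldo--Migliorini \cite{deCataldoMigliorini-Hilbert}) has \emph{rational} coefficients, since it rests on symmetric-power projectors, and this is invisible to the $\F2$-coefficient maximality we are after. So instead I would extract from the integral Tate hypothesis exactly the two inputs required by Theorem \ref{thm:HilbertPower}: condition $(\star)$ and $c_1$-maximality.

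First I would translate $\h(S)\cong\bigoplus_j \mathds{1}(-j)^{\oplus m_j}$ in $\CHM(\R)$ into properties of realizations. The Betti realization of the complexification turns this into a direct sum of copies of $\Z$ placed in even degree, so $H^*(S(\C),\Z)$ is torsion-free with no odd part; in particular $b_1(S)=0$ and $H^2(S,\Z)$ is $2$-torsion-free, which is exactly condition $(\star)$.

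The essential step is $c_1$-maximality. By the $\Hom$-description of Chow groups (Definition \ref{def:Motives}), one has $\CH^1(S_\R)_\Z=\Hom_{\CHM(\R)}(\mathds{1}(-1),\h(S))\cong\Z^{m_1}$ with $m_1=b_2(S)$, and on each summand $\mathds{1}(-1)$ the complex cycle class map $\CH^1(\mathds{1}(-1))=\Z\to H^2=\Z$ is an isomorphism; hence the integral cycle class map $\CH^1(S_\R)_\Z\to H^2(S(\C),\Z)$ is an isomorphism of free groups of rank $m_1$. Reducing modulo $2$, the map $\CH^1(S_\R)\to H^2(S,\F2)$ is surjective. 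Because this map factors, via Krasnov's equivariant cycle class map $\cl_G$, through the restriction $H^2_G(S,\Z(1))\to H^2(S,\F2)$, that restriction is surjective, which is precisely $c_1$-maximality in the sense of Definition \ref{def:c1Max}.

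With $(\star)$ and $c_1$-maximality established, Theorem \ref{thm:HilbertPower} yields the maximality of $S^{[n]}$ for every $n\geq 1$. I expect the $c_1$-maximality step to be the only delicate one: it is where the hypothesis that the decomposition is \emph{integral} and defined over $\R$ is genuinely used, ensuring that every mod-$2$ class in degree two is the reduction of a real algebraic divisor class, equivalently the equivariant first Chern class of a real line bundle. The topological conclusions and the final invocation of Theorem \ref{thm:HilbertPower} are then routine.
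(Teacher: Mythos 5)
Your proof is correct, but it takes a genuinely different route from the paper. The paper's own proof is two lines: it invokes Totaro's theorem (\cite[Theorem 4.1]{TotaroHilbn}) that if the \emph{integral} Chow motive of $S$ is a direct sum of Tate motives then so is that of $S^{[n]}$, and then concludes by Corollary \ref{cor:MotivationMaximal} since Tate motives are equivariantly formal. So the ``naive motivic route'' you set aside is in fact available integrally --- the relevant input is not de Cataldo--Migliorini's rational decomposition but Totaro's integral result, which you appear not to have had in mind. Your alternative is to feed the Tate hypothesis into Theorem \ref{thm:HilbertPower}: the realization of $\bigoplus_j \mathds{1}(-j)^{\oplus m_j}$ gives condition $(\star)$ immediately, and the identification $\CH^1(S_\R)=\Hom(\mathds{1}(-1),\h(S))\cong\Z^{b_2}$ mapping isomorphically onto $H^2(S(\C),\Z)$ under the cycle class map, combined with the factorization through Krasnov's $\cl_G$ and the vanishing of $H^3(S,\Z)$, yields $c_1$-maximality in the sense of Definition \ref{def:c1Max}. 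All of these steps check out. What each approach buys: the paper's proof is shorter and stays entirely within the motivic formalism of Section \ref{sec:Motive}, at the cost of citing Totaro's nontrivial integral motivic decomposition of $\h(S^{[n]})$; yours reuses the Li--Qin machinery already established in Theorem \ref{thm:HilbertPower} and makes transparent \emph{why} the Tate hypothesis suffices, namely that it forces all of $H^2(S,\F2)$ to be represented by real algebraic divisor classes. Your argument is a legitimate and self-contained alternative proof.
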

\begin{proof}
	By Totaro \cite[Theorem 4.1]{TotaroHilbn}, $\h(S^{[n]})$ is also a direct sum of Tate motives. Since Tate motives are equivariantly formal, $S^{[n]}$ is maximal.
\end{proof}
\begin{rmk}
	The above theorem generalizes Corollary \ref{cor:RealRationalSurface}, since $\R$-rational real surfaces all have Tate motives. This can be seen from the minimal model theory for real surfaces, the classification of rational real surfaces, and the blow-up formula (see the proof of Lemma \ref{lemma:KMaxRationalSurface}).
\end{rmk}

Secondly, in the direction of \cite{Kharlamov-Rasdeaconu-HilbertSquare}, we show that cubic fourfolds provide more examples of maximal varieties with non-maximal Hilbert square. This contrasts to the case of cubic threefolds in Theorem \ref{thm:Cubic3folds}. 
\begin{thm}[Hilbert squares of cubic fourfolds]
	\label{thm:NonMaxHilbCubic4}
	Let $X$ be a real smooth cubic fourfold. Assume that in Finashin-Kharlamov's classification \cite{FinashinKharlamov}, $X$ does not belong to the regular class  corresponding to K3 surfaces with 10 spheres as real locus, then the Hilbert square $X^{[2]}$ is not maximal. In particular, the Hilbert square of a maximal smooth real cubic fourfold is not maximal. 
\end{thm}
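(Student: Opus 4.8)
The plan is to reduce the non-maximality of $X^{[2]}$ to the non-maximality of the Fano variety of lines $F(X)$, and then to read the latter off from Finashin--Kharlamov's classification. First I would invoke the Galkin--Shinder relation $[X^{[2]}]=[\PP^4]\cdot[X]+\mathbb{L}^2[F(X)]$ for the cubic fourfold $X$, in its lift to an (integral, hence $\F2$-) direct sum of Chow motives,
\[
\h(X^{[2]})\;\cong\;\h(X)\otimes\h(\PP^4)\;\oplus\;\h(F(X))(-2)\;=\;\bigoplus_{i=0}^{4}\h(X)(-i)\ \oplus\ \h(F(X))(-2).
\]
In particular $\h(F(X))(-2)$ is a direct summand of $\h(X^{[2]})$ in $\Mot(\R)_{\F2}$. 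Since equivariant formality is inherited by direct summands and is insensitive to Tate twists (the remarks following Definition \ref{def:MaximalMotive}), and since equivariant formality is exactly maximality (Proposition \ref{prop:MaximalCriterion}), the maximality of $X^{[2]}$ would force the maximality of $F(X)$. Contrapositively, it suffices to prove that $F(X)$ is \emph{not} maximal whenever $X$ lies outside the distinguished regular class. As a by-product the same decomposition records the complex total Betti number $b_*(X^{[2]},\F2)=b_*(\PP^4)\,b_*(X)+b_*(F(X))=5\cdot 27+324=459$, the target value that the real locus would have to meet.

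Next I would analyse the maximality of $F(X)$ directly. Here $F(X)$ is a real hyper-K\"ahler fourfold of $K3^{[2]}$-deformation type, and the real Beauville--Donagi isomorphism identifies its weight-two real Hodge structure, together with its $G=\operatorname{Gal}(\C/\R)$-action, with the primitive part of $H^4(X)(1)$. Consequently the equivariant formality of $F(X)$ --- equivalently the surjectivity of $H^*_G(F(X),\F2)\to H^*(F(X),\F2)$, equivalently $b_*(F(X)(\R),\F2)=324$ --- is governed by precisely the real Hodge-theoretic data that Finashin--Kharlamov \cite{FinashinKharlamov} use to enumerate the real deformation classes of cubic fourfolds. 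I would then run through their list and compute, class by class, the topology of the real locus $F(X)(\R)$ (equivalently the Smith--Thom defect for $F(X)$), and verify that in every deformation class other than the regular one labelled by the real K3 surface with ten spheres as real locus the inequality for $F(X)$ is strict, i.e. $b_*(F(X)(\R),\F2)<324$. Combined with the first paragraph this yields the non-maximality of $X^{[2]}$ for every such $X$.

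The genuine obstacle is this last step: transferring the maximality question between $X$, $F(X)$ and the associated K3-type real Hodge structure, and matching it against Finashin--Kharlamov's enumeration. The motivic reduction is formal once the splitting is in hand, but it also explains why the phenomenon is delicate: rationally $\h(F(X))$ lies in the tensor subcategory generated by $\h(X)$ (so that a maximal $X$ would yield a maximal $F(X)$ by Corollary \ref{cor:MotivationMaximal} \emph{if} the motivation held with $\F2$-coefficients), whereas the failure of this motivation with $\F2$-coefficients is exactly what permits $F(X)$, and hence $X^{[2]}$, to be non-maximal even for maximal $X$. Thus the non-maximality is an intrinsically $2$-torsion effect, and pinning down the single exceptional class is where Finashin--Kharlamov's geometric classification is indispensable; a secondary point to verify carefully is that the Chow-motivic Galkin--Shinder decomposition splits with $\Z$- (equivalently $\F2$-) coefficients, not merely rationally, so that $\h(F(X))(-2)$ is a genuine direct summand of $\h(X^{[2]})$ in $\Mot(\R)_{\F2}$.
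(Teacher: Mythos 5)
Your overall route is exactly the paper's: reduce the non-maximality of $X^{[2]}$ to that of the Fano variety of lines $F(X)$ via a motivic direct-summand relation, then invoke Corollary \ref{cor:MotivationMaximal}. Your first paragraph is precisely the content of the citation the paper uses for this step (Belmans--Fu--Raedschelders, Corollary 18, which establishes that $F(X)$ is motivated by $X^{[2]}$, i.e.\ the motivic lift of the Galkin--Shinder relation); your caveat that the splitting must hold with $\Z$- or $\F2$-coefficients rather than merely rationally is the right thing to worry about, and it is exactly what that reference supplies. Your closing observation --- that the phenomenon is possible only because the $\F2$-motivation of $F(X)$ by $X$ fails even though the rational one holds --- is a correct and worthwhile gloss on why Remark \ref{rmk:LossOfMaximality} is not a contradiction.

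The gap is in your second step. The non-maximality of $F(X)$ for every real deformation class of smooth cubic fourfolds other than the one distinguished class is not something you prove: you propose to ``run through the list and compute, class by class, the topology of $F(X)(\R)$,'' which for the Finashin--Kharlamov enumeration (some seventy-odd classes) is a substantial computation in its own right, requiring in particular control of $F(X)(\R)$ via the real period map and the surjectivity/injectivity statements of the global Torelli theorem for real hyper-K\"ahler fourfolds --- none of which is set up in your sketch. This is precisely the content of Kharlamov--R{\u{a}}sdeaconu's Theorem 7.7 in \cite{Kharlamov-Rasdeaconu-HilbertSquare}, which the paper simply cites. So the strategy is sound and identical to the paper's, but as written the proposal defers its only non-formal input; you should either cite that result or accept that the class-by-class verification is the actual mathematical work of the theorem, not a routine check.
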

\begin{proof}
	In \cite[Theorem 7.7]{Kharlamov-Rasdeaconu-HilbertSquare} it is shown that for such a cubic fourfold $X$, the Fano variety of lines $F(X)$ is not maximal. However, by the motivic Galkin--Shinder relation established in \cite[Corollary 18]{BelmansFuRaedschelders}, $F(X)$ is motivated by $X^{[2]}$. Therefore, Corollary \ref{cor:MotivationMaximal} implies that $X^{[2]}$ cannot be maximal. Finally, note that a maximal cubic fourfold does not belong to the regular class corresponding to real K3 surfaces with 10 spheres as real locus. 
\end{proof}

%
%\section{Further questions}

%O'Grady spaces\\

\bibliographystyle{amsplain}
\bibliography{references}

\medskip \medskip

\noindent{Universit\'e de Strasbourg, Institut de recherche mathématique avancée (IRMA)  $\&$ Institut d’études avancées de l'université de Strasbourg (USIAS), France} 

\medskip \noindent{\texttt{lie.fu@math.unistra.fr}}

\end{document}